
\documentclass[a4paper,11pt]{article}
\usepackage{amsthm}
\usepackage[tbtags]{amsmath}

\usepackage{amsfonts}
\usepackage{amssymb}
\usepackage{verbatim}

\def\vf{\vspace {4 mm}}
\def\vt{\vspace{ 10 mm}}

\def\E{\mathbb{E}}

\def\eps{\epsilon}

\def\1{\mathbf{1}}

\def\tce{t_c + \eps}
\def\tce2{t_c + \frac{\eps}{2}}
\def\ER{Erd\H{o}s-R\'{e}nyi }
\def\erdos{Erd\H{o}s }
\def\renyi{R\'{e}nyi }
\def\BF{Bohman-Frieze }

\def\var{\text{var}}

\newtheorem{thm}{Theorem}
\newtheorem{lem}{Lemma}

\newtheorem{prop}{Proposition}
\newtheorem{conj}{Conjecture}

\textheight 9.0 in

\topmargin -.45in

\title{The Bohman-Frieze Process Near Criticality}
\author{Mihyun Kang\footnote{Technische Universit\"at Graz. kang@math.tugraz.at. Supported in part by the Heisenberg programme of the Deutsche Forschungsgemeinschaft (KA 2748/2-1).}, Will Perkins\footnote{School of Mathematics, Georgia Institute of Technology. perkins@math.gatech.edu. Supported in part by NSF grant OISE-0730136 and an NSF Postdoctoral Fellowship.}, and Joel Spencer\footnote{Courant Institute of Mathematical Sciences, New York University. spencer@courant.nyu.edu.}}

\begin{document}

\maketitle

\begin{abstract}
The \ER process begins with an empty graph on n vertices, with edges added randomly one at a time to the graph. A classical result of \erdos and \renyi states that the \ER process undergoes a phase transition, which takes place when the number of edges reaches n/2 (we say at time 1) and a giant component emerges. Since this seminal work of \erdos and R\'{e}nyi, various random graph models have been introduced and studied. In this paper we study the \BF process, a simple modification of the \ER process.

The \BF process also begins with an empty graph on $n$ vertices. At each step two random edges are presented, and if the first edge would join two isolated vertices, it is added to a graph; otherwise the second edge is added. We present several new results on the phase transition of the \BF process.  We show that it has a qualitatively similar phase transition to the \ER process in terms of the size and structure of the components near the critical point.  We prove that all components at time $t_c-\eps$ (that is, when the number of edges are $(t_c-\eps)  n/2$) are trees or unicyclic components and that the largest component is of size $\Omega(\eps^{-2} \log n)$.  Further, at $t_c + \eps$, all components apart from the giant component are trees or unicyclic and the size of the second-largest component is $\Theta(\eps^{-2} \log n)$.  Each of these results corresponds to an analogous well-known result for the \ER process.   Our proof techniques include combinatorial arguments, the differential equation method for random processes, and the singularity analysis of the moment generating function for the susceptibility, which satisfies a quasi-linear partial differential equation.
\end{abstract}

\section{Introduction}

\erdos and \renyi began the study of random graphs in 1959 with their landmark paper \lq On the Evolution of Random Graphs\rq  \cite{erdős1960evolution}. They described the changes in the structure of a graph as it evolves from empty (no edges) to full (all edges present), with a uniformly chosen random edge added at each step.  There are three closely related models of the \ER random graph:
(i) the binomial random graph $G(n,p)$, the graph on $n$ vertices in which each edge is present independently with probability $p$;
(ii) the uniform random graph $G(n,m)$, which is a graph chosen uniformly from all graphs of $n$ vertices and $m$ edges;
(iii) the \ER random graph process, which starts with an empty graph on $n$ vertices and at each step a random edge selected uniformly and independently from the set of potential edges is added to the graph.
The models are related as follows: $G(n,p)$ and $G(n,m)$ are essential equivalent for the correct choice of $m$ and $p$, and the $m$th step of the random graph process has the distribution of $G(n,m)$.

The most striking result from \cite{erdős1960evolution} was the discovery of a \lq double jump\rq\ in the largest component size at $\frac{n}{2}$ edges.  With $\frac{cn}{2}$ edges, $c<1$ a constant, all connected components of the random graph are of size $O(\log n)$ with probability $1- o(1)$.  With $\frac{n}{2}$ edges, the largest component is of size $\Theta(n^{2/3})$.  And with  $\frac{cn}{2}$ edges, $c>1$, there is a \lq giant component\rq\ of size $\Theta(n)$ and all other components are of size $O(\log n)$.  This rapid transformation of the graph is called the \lq phase transition\rq\ of the random graph and is related to similar phase transitions in percolation models in mathematical physics.

The phase transition of the \ER random graph is now understood in great detail.  For a comprehensive account of the results see \cite{alon2000probabilistic}, \cite{bollobas2001random}, \cite{janson2000random}.  In particular, the correct scaling for the critical window is known, as well as the distribution of the size and structure of components at each period of the phase transition.

Motivated by the successes of studying the \ER random graph, researchers have studied many different modifications of the random graph process. These modifications range from the study of random graphs with a given degree sequence \cite{kang2008critical}, \cite{molloy1995critical}, \cite{molloy1998size}, to random planar graphs \cite{kang2010two}, \cite{mcdiarmid2005random}, to random simplicial complices \cite{linial2006homological}.

One particular modification proposed by Achlioptas was to use the \lq power of two choices\rq\ to create processes with different behavior.  In an Achlioptas process, two randomly sampled potential edges are presented at each step, then one of them is chosen according to a given rule and added to the graph.  The choice of edge can depend on which edges were added to the graph in previous steps.  Each different rule corresponds to a different random graph process.  The \ER process itself comes from the rule \lq always add the first edge\rq\ (or the rule \lq always add the second edge\rq).  Achlioptas asked whether there was a rule that pushed back the emergence of a giant component.  In \cite{bohman2001avoiding}, Bohman and Frieze answered his question in the affirmative by analyzing the following rule: add the first edge if it would join two vertices that did not intersect any of the previously presented edges; otherwise add the second edge.  Their work showed that the choice of two edges at each step of the process could indeed cause different behavior, and initiated two major threads of research into Achlioptas processes.

The first is concerned with testing the power and limits of Achlioptas processes. How much can we accelerate or delay the phase transition?  How long can we delay the formation of a Hamiltonian cycle?  These questions can be asked in the original context of an Achlioptas process or in the off-line case in which all pairs of edges are given first, then the choices are made.  The results often generalize to the case in which $k$ edges are presented at each step instead of 2. Recent results concern (i) the acceleration or delaying of the phase transition \cite{beveridge2007product},  \cite{bohman2001avoiding},  \cite{bohman2004avoidance},\cite{bohman2006phase},  \cite{flaxman2005embracing}; (ii) the avoidance of small subgraphs \cite{krivelevich2009avoiding}, \cite{mutze2011small}, \cite{prakash2009balanced}; (iii) the acceleration of the appearance of Hamiltonian cycles \cite{krivelevich2010hamiltonicity}.

The second thread of research involves choosing one fixed Achlioptas rule and exploring the fine details of its evolution.  We will focus on this second thread of research in this paper, and in particular concentrate on the \BF process, a process very similar to the process in \cite{bohman2001avoiding}.  The rule for what is now known as the \BF process is as follows: if the first edge would join two isolated vertices, add it; otherwise add the second edge.   The \BF process is a simple modification of the \ER process with dependence between the edges, and different behavior: the emergence of the giant component is in fact delayed.  The motivation for studying its finer behavior is to develop methods for analyzing random graph processes with dependence between the edges and to understand how universal the \ER phase transition is.  The \BF process is also shorthand for a much wider class of Achlioptas rules, so-called \lq bounded-size rules\rq\ introduced in \cite{spencer2007birth}.  In a bounded-size rule, the choice between the two edges can only depend on the sizes of the four components that may be connected, and all components of size $> K$ must be treated the same, for some fixed constant $K$.  Most of the results on the phase transition of the \BF process, in \cite{1005.4494},\cite{spencer2007birth},  and in this work, can be transferred in a straightforward way to all bounded-size rules.

While the phase transition of the \BF process is delayed, it shares many qualitative similarities to that of the \ER process.  In this paper we strengthen these qualitative similarities, presenting results on the size and structure of the largest and second-largest components near the critical point. We also prove an asymptotic formula for the fraction of vertices in components of a given size.  As in previous analysis of the \BF process, we use the differential equation method for random processes. But here, using the ordinary generating functions arising from the \BF process, we derive a quasi-linear partial differential equation that tracks key statistics of the process. Then we apply the singularity implicit functions theorem and the transfer theorem from the work of Flajolet and Odlyzko~\cite{flajolet1990singularity} and Flajolet and Sedgewick \cite{flajolet2009analytic}.

Before we present our main results, we introduce notation and concepts that will be used throughout the paper.

\section{Preliminaries}

Throughout the paper we consider the following version of the \BF process: at each step, we select two edges uniformly at random (with replacement) from the set of edges not yet present in the graph.  If the first edge would join two isolated vertices, we add it.  Otherwise, we add the second edge.

\paragraph{Notation.}

We write \lq with high probability\rq\ or \lq whp\rq\ if an event holds with probability  $\to 1$ as $n \to \infty$ (other authors sometime use `aas' or `asymptotically almost surely'). We use the standard asymptotic notation $O(\cdot), o( \cdot), \Theta(\cdot), \Omega(\cdot),$ and $\omega(\cdot)$.  We often combine the asymptotic notation with a statement about probability: whp $g_n = O(f(n))$  means that there exists a constant $K$ so that $g_n \le K f(n)$ whp; and  whp $g_n = o(f(n))$ means that for every constant $\delta>0$, $g_n < \delta f(n)$ whp.
We sometimes use asymptotic notation as $n \to \infty$ and $\eps \to 0$, where $\eps$ is independent of $n$: whp  $g(n,\eps)=O(f(n,\eps))$ means that there exist constants $K$ and $\eps_0$, such that for all $0<\eps<\eps_0$, $g(n,\eps) \le K f(n,\eps)$ with probability $\to 1$ as $n \to \infty$, where the convergence need not be uniform in $\eps$.

For $i,j\in \mathbb N$, $C_i$ will refer to the $i$th largest connected component of a graph.  $X_1(j)$ will denote the number of isolated vertices at step $j$ of the process, and similarly $X_i(j)$ will be the number of vertices in components of size $i$ at step $j$ of the process.  Our main results concern the size of structure of connected components in the random graphs. In terms of structure, a \lq simple\rq\ component is either a tree or unicyclic (if a component is simple and has $k$ vertices, then it has $k-1$ or $k$ edges and is a tree or unicyclic component respectively).  A \lq complex\rq\ component is a connected component with two or more cycles, or $\ge k+1$ edges if it has $k$ vertices.

The \ER random graph undergoes a phase transition around step $\frac{n}{2}$. So, we will parameterize time in a standard way
\begin{equation}
  t= \frac{\# \text{ of edges}}{ n/2}
\end{equation}
so $t$ is the average degree in the graph. We write $G(t)$ for a graph with $t \frac{n}{2}$ edges in a random graph process.  We denote the size of a component $C$ by $|C|$, so $|C_1(t)|$ is the size of the largest component of the random graph process at time $t$.  The critical point $t_c$ for the phase transition is defined so that for all fixed $t<t_c$, $|C_1(t)| = O(\log n)$ and for all fixed $t> t_c$, $|C_1(t)| = \Theta(n)$.  For the \ER process, $t_c=1$, while for the \BF process $t_c$ is approximately $1.176$ \cite{spencer2007birth}.

\paragraph{The Differential Equation Method.}

A key method used in studying random graph processes is the \lq Differential Equation Method\rq.  Wormald \cite{wormald1995differential}, \cite{wormald1999differential} has proved several general theorems about the method. The method was used by Bohman \cite{bohman2009triangle} and  by Bohman and Keevash  \cite{bohman2010early} to analyze the triangle-free process and the H-free process respectively. It has previously been used by Spencer and Wormald \cite{spencer2007birth}, Bohman and Kravitz \cite{bohman2006creating}, and Janson and Spencer \cite{1005.4494} in analyzing the phase transition of Achlioptas processes.

The method is used to show that a random quantity, $X_i(j)$ for example, is tightly concentrated under the right scaling around a deterministic function throughout the course of a random graph process (using sub- and supermartingales).  The deterministic function is the solution to a system of ordinary differential equations. Applying Wormald's theorem (e.g. Theorem 5.1 in  \cite{wormald1995differential}) on the differential equation method, Spencer and Wormald \cite{spencer2007birth}  show that in a bounded-size Achlioptas process, the proportion of vertices in components of size $i$, for fixed $i$, is concentrated around a deterministic function $x_i(t)$.
In Section \ref{ODEanalysis} we study this system of ordinary differential equations for the specific case of the \BF process.

\paragraph{Susceptibility.}

The susceptibility of a graph $G$ is defined as the expected component size of a randomly chosen vertex:

\begin{equation}
\label{suscepdefeq}
S_1(G) = \frac{1}{n} \sum_v |K(v)| = \frac{1}{n}\sum_{C_i} |C_i|^2,
\end{equation}
where the second sum is over all connected components and $K(v)$ denotes the component containing the vertex $v$.  The name \lq susceptibility\rq\ comes from the study of percolation on infinite lattices in mathematical physics (see, for example, \cite{grimmett1999percolation}).  There, the susceptibility is defined to be the expected size of the connected cluster containing the origin.

We can also define higher moments of the susceptibility:
\begin{equation}
S_k(G) = \frac{1}{n} \sum_v |K(v)|^k = \frac{1}{n}\sum_{C_i} |C_i|^{k+1}.
\end{equation}
Using the differential equation method, Spencer and Wormald \cite{spencer2007birth} showed that for $t < t_c$, whp $S_1(G_j) = s_1(\frac{2j}{n}) + o(1)$ for some deterministic function $s_1(t)$ which blows-up at $t= t_c$.  Janson and Spencer \cite{1005.4494} extend this proof to $S_k$ for any fixed $k$. The deterministic functions $s_k(t)$ are given as the solution to a system of ODE's.  We write these ODE's for the \BF process and analyze them in Section \ref{ODEanalysis}.  The blow-up point of each of $s_k$'s is $t= t_c$ \cite{spencer2007birth}.   We will write $S(j)$ for $S_1(G_j)$ and $s(t)$ for $s_1(t)$ if we are considering the susceptibility apart from the higher moments.

\paragraph{Singularity Analysis.}

Given an ordinary generating function $f(z):=\sum_{i\ge 0} f_i\, z^i$,  we sometimes use the notation $[z^i]\, f(z)$ to mean $f_i$. When $f_i\ge 0$ for all $i\ge 0$, by Pringsheim's theorem~\cite{flajolet2009analytic},  among the singularities of  $f(z)$ which are closest to the origin, there is one with a positive real value, which is called the dominant singularity $\rho$ of $f(z)$. In this paper we will deal with  generating functions $f(z)$ which have a singular expansion at $\rho$ of the form
$$f(z) = g(z)-h(z) (1-z/\rho)^{1/2},$$
for some functions $g(z), h(z)$ analytic at $\rho$, and
which are $\Delta$-analytic, that is, they can be analytically continued to the so-called $\Delta$-domain $\Delta=\{z\, | \, |z|< \rho +\varepsilon, z\neq \rho, |\arg(z-\rho)|>\phi\}$ for some $\varepsilon >0$ and $0<\phi<\pi/2$.
Therefore, applying the transfer theorem by Flajolet and Odlyzko~\cite{flajolet1990singularity} (see e.g. Theorem VI.3~\cite{flajolet2009analytic}): for $\alpha<\beta$
$$[z^i]\, \left((1-z/\rho)^{\alpha} + O(1-z/\rho)^{\beta}\right)= [z^i]\,(1-z/\rho)^{\alpha} +  O\left([z^i]\, (1-z/\rho)^{\beta}\right)$$
and the basic scaling (Chapter 6, \cite{flajolet2009analytic}):
$$[z^i]\, (1-z/\rho)^{1/2}=- \frac{1}{2\sqrt{\pi}}\, i^{-3/2}\, \rho^{-i}\, (1+O(1/i)),$$
we obtain  asymptotics for the coefficient $[z^i]\, f(z)$  of the form
$$[z^i]\, f(z)= \frac{h(\rho)}{2\sqrt{\pi}}\, i^{-3/2}\, \rho^{-i}\, (1+O(1/i)).$$

\section{Main Results}

There is a strong sense that the phase transition of the Bohman-Frieze process is qualitatively the same as that of the \ER process: while certain constants involved may differ, the behavior is the same.  In statistical physics, a group of processes with the same critical exponents is said to belong to the same \lq Universality Class\rq.  In that spirit, much of the work on the Bohman-Frieze process has been aimed at investigating its phase transition and proving results analogous to what we already know about the \ER phase transition.

Our results in this paper are in the barely sub- and supercritical regimes, $t_c \pm \eps$ (Theorems \ref{subcriticalthm}, \ref{supercriticalthm}, and \ref{smallpointsthm}).
Asymptotics involving both $n$ and $\eps$ in Theorems \ref{subcriticalthm} and \ref{supercriticalthm} are given by first fixing $\eps$, letting $n \to \infty$, then letting $\eps \to 0$.  Much is known for the \ER process where $\eps = \eps(n) \to 0$, but here we keep $\eps$ fixed relative to $n$.   A recent preprint \cite{bhamidi2011bohman} investigates component sizes inside the critical window.  
We mention previous work in each regime and the analogous \ER properties, then we give our new results.

\subsection{Critical Point }

Bohman and Frieze initiated the study of Achlioptas processes by exhibiting a process with $t_c >1$.  Spencer and Wormald \cite{spencer2007birth} and Bohman and Kravitz \cite{bohman2006creating} showed that the precise value of $t_c$ for any bounded-size Achlioptas processes (including the \BF process) can be expressed as the blow-up point of $s_1(t)$, the deterministic function around which the susceptibility concentrates.  The corresponding function for the \ER process is $s_1(t) = \frac{1}{1-t}$, which blows up at $t=1$.  For the \BF process, $t_c$ can be computed numerically and is approximately $1.176$.

\subsection{Growth of Early Giant}

In the \ER random process, the fraction of vertices in the largest components at time $t$ is $\rho(t) +o(1)$ whp, where $\rho(t)$ is the survival probability of a Poisson Galton-Watson branching process with mean $t$.  For $t \le 1$, $\rho(t) =0$ and for $t>1$, $\rho$ is the non-zero solution to the following equation:
\begin{equation*}
1- \rho = e^{-t \rho}.
\end{equation*}
In the barely super-critical regime, this gives $\rho (1+\eps) \sim 2\eps $ as $\eps\to 0$; i.e. the giant component grows linearly after the critical point with an initial rate of $2$.

For the Bohman-Frieze process, no formula is known for the size of the giant component at an arbitrary $t$, but in the barely super-critical regime, Janson and Spencer showed \cite{1005.4494} that there exist constants $\gamma, K$ so that whp:
\begin{equation*}
\gamma \eps - K \eps^{4/3} \le  |C_1(t_c+\eps)|/n \le \gamma \eps + K \eps^{4/3}.
\end{equation*}
This roughly says that the giant component of the \BF process grows linearly after the critical point with an initial rate of $\gamma$.  $\gamma$ can be computed numerically and is approximately $2.463$.  Their method involves approximating the survival probability of a multi-type Poisson branching process using the differential equation method and the first three moments of the susceptibility.

\subsection{Barely Subcritical Regime}
For the \ER process, in the barely subcritical regime, whp all components are of size $O(\log n)$ and are simple. Indeed, whp $|C_1(t_c -\eps)| = \Theta(\eps^{-2} \log n)$.  The distribution of the number of cycles is also known (see \cite{bollobás1984evolution}, \cite{bollobas2001random}).  In fact, Flajolet, Knuth and Pittel \cite{flajolet1989first} found the distribution of the length of the first cycle to appear in the \ER process.
For the \BF process, Spencer and Wormald \cite{spencer2007birth} showed that whp $|C_1(t_c - \eps)| = O(\log n)$ with a constant in the $O(\cdot)$ that depends exponentially on $\eps^{-1}$; in Section \ref{largestsubsec}, we will show that whp $|C_1(t_c - \eps)| = \Omega(\eps^{-2}\log n)$, and in Conjecture \ref{ubconj} we conjecture a matching upper bound.  We also describe the component structure at $t_c -\eps$:

\begin{thm}
\label{subcriticalthm}
At $t_c - \eps$, whp every component in the \BF process is simple.  The expected number of unicyclic components is $ \sim  \frac{1}{2} \log \frac{1}{\eps} $ as $\eps\to 0$, and the probability that there are no cycles is $\sim \sqrt{\eps} $.
\end{thm}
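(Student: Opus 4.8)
The plan is to control the number and structure of components near $t_c - \eps$ by passing to generating functions. Let me sketch the approach.

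The plan is to reduce all three assertions to calculations with the deterministic functions governing the process, which I would control through the differential equation method and the singularity analysis of Section~\ref{ODEanalysis}. Write $x_1(t)$ for the limiting fraction of isolated vertices and $s_1(t)$ for the limiting susceptibility. A one-step expectation computation gives the ODE $s_1' = x_1^2 + (1-x_1^2)s_1^2$ with $s_1(0)=1$: with probability $\approx x_1^2$ the first presented edge joins two isolated vertices and is added (contributing $\Delta S_1 = 2/n$), and otherwise the uniformly chosen second edge is added, merging the components of two essentially independent random vertices (contributing $\approx 2s_1^2/n$). Consequently $s_1$ blows up at $t_c$ with $s_1(t)\sim\big[(1-x_1(t_c)^2)(t_c-t)\big]^{-1}$ as $t\to t_c$. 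I would also use the generation profile $D_r(t)$, the limiting expected number of vertices at graph distance exactly $r$ from a uniform vertex, so that $D_0=1$, $D_1(t)=t$ and $\sum_{r\ge0}D_r(t)=s_1(t)$; the bounds on $x_i(t)$ and $D_r(t)$ that are needed come from the singularity analysis of Section~\ref{ODEanalysis}.

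The first step is to show that whp every component is simple, by a first-moment estimate run along the process. A new complex component can be created at a step only if the added edge has both endpoints in components that already contain a cycle, and such an edge is necessarily the second (uniform) edge, since the first edge is used only when it joins two isolated vertices. Letting $Y(j)$ be the number of vertices in cyclic components at step $j$, the probability of creating a complex component at step $j$ is at most $\binom{Y(j)}{2}/\binom n2\le (Y(j)/n)^2$, so the expected number of complex components at $t_c-\eps$ is at most $n^{-2}\sum_{j\le(t_c-\eps)n/2}\E[Y(j)^2]$. Below $t_c$ the cyclic part of the graph is small: bounding the number of cyclic components by the cyclomatic number and each such component by $|C_1(t_c-\eps)|=O_\eps(\log n)$~\cite{spencer2007birth} gives $\E[Y(j)^2]=\mathrm{poly}(1/(t_c-t_j))\cdot\mathrm{polylog}(n)$ uniformly in $n$, whence the expected number of complex components is $O(\mathrm{poly}(1/\eps)\,\mathrm{polylog}(n)/n)=o(1)$. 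Thus whp there are no complex components, and in particular whp the number of unicyclic components equals the number of cycles.

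Next I would count the cycles. A cycle of length $\ell\ge3$ is created exactly once, at the step whose added edge joins two vertices at distance $\ell-1$, and that edge is always the second, uniform edge. So at a step at time $t'$ the probability of creating an $\ell$-cycle is $\approx(1-x_1(t')^2)\,D_{\ell-1}(t')/n$; integrating over the $\tfrac n2\,dt'$ steps in $[0,t_c-\eps]$ and summing over $\ell\ge3$, using $\sum_{r\ge2}D_r=s_1-1-t'$, gives, as $n\to\infty$,
\begin{equation*}
\E[\#\text{ cycles at }t_c-\eps]\ \longrightarrow\ \tfrac12\int_0^{t_c-\eps}\big(1-x_1(t')^2\big)\big(s_1(t')-1-t'\big)\,dt'.
\end{equation*}
Since the integrand is $\sim(t_c-t')^{-1}$ as $t'\to t_c$ (the factor $1-x_1(t_c)^2$ cancels against the blow-up rate of $s_1$), the integral equals $\tfrac12\log\tfrac1\eps+O(1)$, which yields the asserted $\sim\tfrac12\log\tfrac1\eps$ for the expected number of unicyclic components; rewriting the divergent part as $\int s_1'/s_1$ and evaluating the remaining convergent integrals against the explicit $x_1,s_1$ of the \BF process pins down the $O(1)$ term precisely.

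For the last assertion, the point is that for each fixed $\ell$ the number of $\ell$-cycles is, below $t_c$, a sum of weakly dependent rare indicators converging (jointly, and independently over $\ell$) to a Poisson variable of mean $\mu_\ell:=\tfrac12\int_0^{t_c-\eps}(1-x_1^2)D_{\ell-1}\,dt'$; since $\sum_{\ell>L}\mu_\ell\to0$ as $L\to\infty$ for fixed $\eps$, long cycles are negligible and $\P(\text{no cycles})\to\prod_{\ell\ge3}e^{-\mu_\ell}=e^{-\E[\#\text{ cycles}]}$, which together with the exact evaluation of $\E[\#\text{ cycles}]$ is $\sqrt\eps\,(1+o_\eps(1))$. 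The genuinely delicate part of the whole argument is this Poisson approximation: because the edges of the \BF process are dependent it cannot be read off from independence but must be obtained by coupling the local neighbourhood exploration with a subcritical multitype branching process, together with quantitative Chen--Stein bounds that remain effective in the iterated limit $n\to\infty$ and then $\eps\to0$; and obtaining the precise constant in $\sqrt\eps$ requires the full, not merely leading-order, evaluation of $\E[\#\text{ cycles}]$. The other two parts are comparatively straightforward once the ODE for $s_1$ and the singularity-analysis estimates of Section~\ref{ODEanalysis} are available.
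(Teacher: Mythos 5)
Your proof takes essentially the same route as the paper's: a Poisson approximation for the number of cycles created, with mean computed by integrating the per-step internal-edge probability $\tfrac12(1-x_1^2(t))\cdot(\text{susceptibility term})$ over time and using the ODE $s_1'=x_1^2+(1-x_1^2)s_1^2$ to extract the $\sim\tfrac12\log\tfrac1\eps$ asymptotic, combined with a first-moment bound (pairing cyclic components, each of size $O_\eps(\log n)$) showing no complex components form. Your one refinement, decomposing by cycle length via $D_r$ so the integrand is $(1-x_1^2)(s_1-1-t)$ rather than the paper's $(1-x_1^2)s_1$, is correct and is precisely the bookkeeping needed to track the $O(1)$ term in $\mu_\eps$ that governs the constant multiplying $\sqrt{\eps}$ in the last assertion — a point the paper's proof glosses over and you rightly flag as requiring the full (not merely leading-order) evaluation.
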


\subsection{Barely Supercritical  Regime}

In the \ER process, there is a symmetry between the subcritical graph and the supercritical graph with the giant removed (see \cite{luczak1990component}).  $C_2(t_c + \eps)$ therefore is very close in distribution to $C_1(t_c-\eps)$.  In particular, whp $C_2(t_c+\eps)$ is simple and $|C_2(t_c+\eps)| = \Theta(\eps^{-2} \log n)$.  While no corresponding symmetry theorem for the \BF process is known, we find  the size and structure of the smaller components:

\begin{thm}
\label{supercriticalthm}
Whp $ |C_2(t_c+\eps)| = \Theta ( \eps^{-2} \log n)$.  Moreover, whp all components apart from $C_1$ are simple.
\end{thm}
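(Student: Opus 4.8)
The plan is to transfer the subcritical analysis (Theorem \ref{subcriticalthm} and the $\Omega(\eps^{-2}\log n)$ lower bound from Section \ref{largestsubsec}) to the supercritical regime by a coupling/sprinkling argument that removes the giant component and compares what remains to a subcritical Bohman-Frieze graph, together with a direct martingale argument for the upper bound on $|C_2|$. First I would establish the upper bound $|C_2(t_c+\eps)| = O(\eps^{-2}\log n)$ whp. The natural route is to run the process up to $t_c - \eps'$ for a suitable $\eps' \asymp \eps$, where by Spencer-Wormald all components are $O(\log n)$ and the susceptibility is $s(t_c-\eps') = \Theta(\eps^{-1})$ (this is exactly the quantitative behavior of $s_1(t)$ near its blow-up point that comes out of the ODE analysis of Section \ref{ODEanalysis}), and then run the remaining $\Theta(\eps n)$ steps. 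During these last steps one tracks the second-largest component: each added edge is "useful" for growing $C_2$ only if it lands inside the relevant component, and a union-bound / exponential-supermartingale argument (in the spirit of the $\eps^{-2}\log n$ bounds for $G(n,m)$, using that the number of vertices in components of size $\ge k$ is controlled by the susceptibility) shows that no component other than $C_1$ reaches size $\omega(\eps^{-2}\log n)$. The lower bound $|C_2(t_c+\eps)| = \Omega(\eps^{-2}\log n)$ should follow because at time $t_c - \eps$ we already have $|C_1(t_c-\eps)| = \Omega(\eps^{-2}\log n)$ whp, and this component is whp not absorbed into the giant by time $t_c+\eps$: the giant at $t_c+\eps$ has only $\sim\gamma\eps n$ vertices, so a fixed large-but-sublinear component survives outside it with probability bounded below (and one can boost this to whp by looking at many disjoint candidate components, each of which has an $\Omega(1)$ chance of avoiding the giant).

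For the structural statement — that whp every component other than $C_1$ is simple — I would argue that a complex component (two independent cycles, i.e.\ excess $\ge 2$) of size $s$ is very unlikely to form unless $s$ is linear. The heuristic, as in the \ER case, is that the expected number of complex components on $\le s$ vertices in $G(t_c+\eps)$ is $O(\eps^{3} \cdot s)$ or similar (each extra independent cycle costs a factor $\eps$ in the subcritical-type expansion, and a factor controlled by the susceptibility for the size); since we have already shown $|C_2| = O(\eps^{-2}\log n)$ whp, summing this expectation over $s$ up to $O(\eps^{-2}\log n)$ gives $o(1)$. Concretely, I would bound the expected number of pairs (vertex $v$, pair of distinct cycles through the component of $v$, component of size $\le L$ with $L = \Theta(\eps^{-2}\log n)$): cycles of length $\ell$ appear with probability that decays because the Bohman-Frieze edge-selection rule only mildly perturbs the \ER cycle probabilities (the first edge is taken only to join two isolated vertices, so for $\ell \ge 2$ the cycle-closing edges are essentially uniform second-choice edges), and the number of vertices in bounded components times the susceptibility controls the component-size sum. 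This reduces the claim to a computation of the form $\sum_{\ell_1,\ell_2} (\text{cycle prob})^2 \cdot (\text{susceptibility factors}) = o(1)$ in the regime where $\eps$ is fixed and $n\to\infty$.

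The main obstacle I expect is making the "remove the giant, what's left looks subcritical" coupling rigorous for a process with dependence between edges: unlike $G(n,m)$, conditioning on the giant component's vertex set changes the distribution of the edges among the remaining vertices, and there is no exact Łuczak-type symmetry for the Bohman-Frieze process (the excerpt explicitly notes this). I would try to avoid needing a genuine symmetry theorem by instead using only one-sided comparisons: for the upper bound on $|C_2|$ it suffices to dominate the growth of non-giant components by an easier process (e.g.\ couple with a subcritical \ER-type process on the non-giant vertices, using that the Bohman-Frieze rule never makes a non-isolated-vertex merge more likely than uniform), and for the structural claim a first-moment bound does not require any conditioning on the giant at all — only the already-proven whp bound $|C_2| = O(\eps^{-2}\log n)$ to cut off the sum. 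Thus the real work is (i) the quantitative ODE estimates $s(t_c-\eps) \asymp \eps^{-1}$ and the related tail bounds on $X_{\ge k}$, and (ii) the exponential supermartingale controlling component growth over the $\Theta(\eps n)$ critical-window steps; both are of the same flavor as arguments in \cite{spencer2007birth} and \cite{1005.4494} and should go through with care.
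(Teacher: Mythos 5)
Your general framework (sprinkling over a window around $t_c$, first-moment bounds for structure, and a coupling to control edge placement) matches the spirit of the paper, but several of the quantitative claims as stated are wrong or too vague to close the argument. For the lower bound on $|C_2|$, you claim a fixed component of size $\Theta(\eps^{-2}\log n)$ at $t_c-\eps$ avoids the giant with probability $\Omega(1)$ during the sprinkle; in fact the expected number of edges between such a component and a giant of size $\Theta(\eps n)$ over $\Theta(\eps n)$ steps is $\Theta(\log n)$, so the survival probability is $n^{-c}$ for some constant $c>0$, not $\Omega(1)$. The paper handles this by building $n^{0.8}$ disjoint ``good'' components of size $\Theta(\eps^{-2}\log n)$ out of medium ($\Theta(\eps^{-2})$-vertex) components via a carefully coupled set of ``green'' edges, showing each survives the remaining ``brown'' edges with probability $\ge n^{-0.1}$, and then using a second-moment argument (Lemma~\ref{chebylem}) to get at least two survivors whp. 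Your plan needs the same kind of quantitative bookkeeping: polynomially many disjoint candidates together with a variance bound, not ``boost to whp'' from $\Omega(1)$-probability events.

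For the upper bound, your exponential-supermartingale idea for tracking $|C_2|$ over the last $\Theta(\eps n)$ steps is not clearly workable: $C_2$ changes identity as components merge and absorb others, so a direct supermartingale on $|C_2|$ is hard to set up. The paper instead shows that \emph{any} fixed vertex set that forms a component of size $\ge K_u\eps^{-2}\log n$ outside $C_1(t_c+\eps/2)$ is, with probability $\ge 1-n^{-2}$, joined to $C_1$ by one of the ``red'' edges that get forced in by the Bohman--Frieze rule; a union bound over all such sets then kills them all. This relies on Proposition~\ref{Propnonisol} (any such component has $\Omega(\eps^{-2}\log n)$ non-isolated-at-$t_c+\eps/2$ vertices, proved by a BFS comparison to a branching process), which you do not have a substitute for. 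Finally, for the structural claim your heuristic ``expected complex components of size $\le s$ is $O(\eps^3 s)$'' summed to $s=\Theta(\eps^{-2}\log n)$ gives $\Theta(\eps\log n)\to\infty$, not $o(1)$: the needed decay comes from a $1/n$ factor. The paper's argument is cleaner and more elementary than cycle counting: at each step the probability an edge lands inside a component of size $\le L$ is $\le L/n$ (via the $L$-restricted susceptibility), so whp $\le L\log n$ cyclic components of size $\le L$ are created, and then the expected number of complex components of size $\le L$ is $\le L^3(\log n)/n = o(1)$ for $L=K_u\eps^{-2}\log n$. You should replace the ``factor of $\eps$ per cycle'' heuristic with this per-step $O(L/n)$ bound.
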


\subsection{Vertices in Small Components}
The differential equation method shows \cite{spencer2007birth} that in a bounded-size Achlioptas process, the proportion of vertices in components of size $i$, for  $i$ fixed relative to $n$, is whp concentrated around a deterministic function $x_i(t)$:
\begin{equation}
\frac{X_i \left( t \cdot \frac{n}{2} \right)}{n} \, =  \,  x_i (t) + o(1).\label{concentration}
\end{equation}

For the \ER random process, there is a precise formula for $x_i(t)$ (see Equation (\ref{erexact})), which yields the following asymptotics as $i \to \infty$: 
\begin{equation}
 \label{ersmalleq}
x_i(t_c \pm \eps) = \frac{C(\eps)}{\sqrt{2 \pi}} i^{-3/2} e^{ - \frac{1}{2} D(\eps) \eps^2 i  } (1+O(1/i)) .
\end{equation}
Here $C(\eps) = 1 + O(\eps)$ and $D(\eps) = 1 + O(\eps)$.

In the \BF process, because of the dependence between edges, we cannot derive an exact formula similar to Equation (\ref{erexact}).  Instead, we derive an asymptotic formula for $x_i(t)$ similar to (\ref{ersmalleq}). To this end, we use the generating function $P(t,z)=\sum_i x_i(t) z^i$, which is related to the higher moments of the susceptibility by $\frac{\partial^k P(t,z)}{\partial^k z}\Big|_{z=1} = s_k(t)$. In Section \ref{ODEanalysis} we show that $x_i(t)$ is determined by the following system of ODE's:
\begin{align*}
x_1^\prime(t) &= -x_1(t) - x_1^2(t) +x_1^3(t)\\
x_2^\prime(t) &= 2x_1^2(t) - x_1^4(t) - 2(1-x_1^2(t))x_2(t)\\
x_i^\prime(t) &=  \frac{i}{2} ( 1- x_1^2(t)) \sum_{k < i}  x_k(t) x_{i-k}(t) - i ( 1- x_1^2(t)) x_i(t),\quad i\ge 2
\end{align*}
and therefore $P(t,z)$
satisfies a non-homogeneous quasi-linear PDE:
\begin{equation*}
 \frac{\partial P(t,z)  }{\partial t} - z (1-x_1^2(t)) (P(t,z) -1) \frac{\partial P(t,z)}{\partial z} = z(z-1)x_1^2(t)
\end{equation*}
with the initial condition $P(0,z) =z$.
Its solution is implicitly defined by a single function. Applying the singular implicit functions theorem  (see e.g. Lemma VII.3. in \cite{flajolet2009analytic}) to that solution, we derive the following asymptotic formula for $x_i(t)$:

\begin{thm}
\label{smallpointsthm}
Let $x_i(t)$ be the solution of the above ODE's.  Then 
\begin{equation}
x_i(t_c \pm \eps) = C(\eps) i^{-3/2} e^{ - D(\eps) \eps^2  i} (1+O(1/i)),
\end{equation}
where $C(\eps) = c+ O(\eps)$ and $D(\eps) = d + O(\eps)$ for absolute constants $c,d$.
\end{thm}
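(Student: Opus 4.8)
The plan is to solve the quasi-linear PDE for $P(t,z)$ by the method of characteristics, read off from the resulting implicit representation the dominant singularity $\rho(t)$ of $z\mapsto P(t,z)$ and its square-root type, apply the transfer theorem to recover $x_i(t)=[z^i]P(t,z)$, and finally expand $\rho(t)$ and the attached amplitude around $t=t_c$. \emph{Step 1 (implicit solution).} Put $\beta(t)=1-x_1(t)^2$ and $Q=P-1$, so the PDE becomes $Q_t-z\,\beta(t)\,Q\,Q_z=z(z-1)\bigl(1-\beta(t)\bigr)$ with $Q(0,z)=z-1$. Since $x_1'=x_1(x_1^2-x_1-1)$ is autonomous and smooth, $x_1$ and hence $\beta$ are analytic on $\R$, and every $x_i$ is analytic as well (each obeys a linear ODE in $x_i$ with coefficients polynomial in $x_1$), so all the data are analytic near $t_c$. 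Integrating the characteristic system $\dot t=1,\ \dot z=-z\,\beta(t)\,Q,\ \dot Q=z(z-1)\bigl(1-\beta(t)\bigr)$ from the initial curve $\{t=0,\ Q=z-1\}$ and eliminating the initial abscissa $z_0$ (the characteristic flow is analytic in $(z_0,t)$, and the elimination is analytic where $\partial_{z_0}z(t;z_0)\neq 0$) represents $P$ as the solution of an analytic implicit equation $\mathcal{E}(t,z,P)=0$ with $P(0,z)=z$: the ``single function'' defining $P$. This is the \BF analogue of the \ER identity $P(t,z)=t^{-1}T\bigl(tze^{-t}\bigr)$ with $T$ the tree function.

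\emph{Step 2 (localizing and typing the singularity).} Since $x_i(t)\ge 0$, Pringsheim's theorem makes the dominant singularity $\rho(t)$ of $P(t,\cdot)$ real and positive, and a straightforward induction on the ODE system shows $x_i(t)>0$ for all $i\ge 1$, $t>0$ (if $x_k>0$ for $k<i$ while $x_i$ vanished, its derivative would be strictly positive), so $P(t,\cdot)$ is aperiodic and $\rho(t)$ is the unique singularity on $|z|=\rho(t)$. Write $\mathcal{E}(t,z,P)=P-F(t,z,P)$ and apply the singular implicit function theorem (Lemma VII.3 of \cite{flajolet2009analytic}) for fixed $t$ near $t_c$: the branch point is the least positive solution $(\rho(t),\tau(t))$ of $\mathcal{E}=0$, $\mathcal{E}_P=0$, and provided the nondegeneracy $\mathcal{E}_z\neq 0\neq\mathcal{E}_{PP}$ holds there (to be checked from the explicit $\mathcal{E}$), $P$ is $\Delta$-analytic with singular expansion $P(t,z)=g(t,z)-h(t,z)\bigl(1-z/\rho(t)\bigr)^{1/2}$, with $g,h$ analytic at $\rho(t)$ and $h(t,\rho(t))>0$ (positivity forced by $x_i(t)>0$). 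To pin down $\rho(t_c)$: $\sum_i x_i(t)\le 1$ gives $\rho(t)\ge 1$, while $s_1(t)=P_z(t,1)\to\infty$ as $t\uparrow t_c$ and the expansion makes $P_z(t,1)$ finite whenever $\rho(t)>1$, so $\rho(t)\downarrow 1$ and $\rho(t_c)=1$; as no giant is present at $t_c$, $\sum_i x_i(t_c)=1$, i.e. $\tau(t_c)=g(t_c,1)=1$, so the branch point at $t=t_c$ lies at $(z,P)=(1,1)$.

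\emph{Step 3 (transfer and expansion at $t_c$).} The transfer theorem and the basic scaling from the Preliminaries give, for fixed $t$ as $i\to\infty$,
\[
  x_i(t)=[z^i]P(t,z)=\frac{h\bigl(t,\rho(t)\bigr)}{2\sqrt{\pi}}\;i^{-3/2}\,\rho(t)^{-i}\,\bigl(1+O(1/i)\bigr).
\]
The amplitude $h(t,\rho(t))$ is analytic in $t$ near $t_c$ (analytic data of $\mathcal{E}$ composed with the analytic branch-point map), so $C(\eps):=h\bigl(t_c\pm\eps,\rho(t_c\pm\eps)\bigr)/(2\sqrt{\pi})=c+O(\eps)$ with $c:=h(t_c,1)/(2\sqrt{\pi})>0$. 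For $\rho$: because $P(t,1)=1$ for every $t\le t_c$, the surface $\mathcal{E}=0$ contains $(t,1,1)$ there, so $\mathcal{E}(t,1,1)\equiv 0$, hence $\mathcal{E}_t(t,1,1)\equiv 0$, on $t\le t_c$, and by analyticity $\mathcal{E}_t(t_c,1,1)=0$. Differentiating $\mathcal{E}(t,\rho(t),\tau(t))\equiv 0$ in $t$ and using $\mathcal{E}_P=0$ at the branch point gives $\mathcal{E}_z(t_c,1,1)\,\rho'(t_c)=-\mathcal{E}_t(t_c,1,1)=0$, so $\rho'(t_c)=0$. Since $\rho$ is analytic at $t_c$ with a strict minimum there ($\rho(t)>1$ for $t\neq t_c$ nearby forces $\rho''(t_c)>0$; equivalently, $s_1$ has a simple pole at $t_c$), $\rho(t_c\pm\eps)=1+d\eps^2+O(\eps^3)$ with $d:=\tfrac12\rho''(t_c)>0$, whence $\log\rho(t_c\pm\eps)=d\eps^2+O(\eps^3)$, i.e. $\rho(t_c\pm\eps)^{-i}=e^{-D(\eps)\eps^2 i}$ with $D(\eps)=d+O(\eps)$. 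Substituting into the transfer formula yields $x_i(t_c\pm\eps)=C(\eps)\,i^{-3/2}e^{-D(\eps)\eps^2 i}\bigl(1+O(1/i)\bigr)$.

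\emph{Main obstacle.} The work is concentrated in Step 1 --- finding explicit first integrals of the characteristic system so as to exhibit $\mathcal{E}$ --- and in verifying, from the explicit $\mathcal{E}$, the hypotheses of the singular implicit function theorem together with $\rho''(t_c)>0$; the rest is a routine run of singularity analysis. The point worth isolating conceptually is the vanishing $\rho'(t_c)=0$, which is forced by the mass-conservation identity $P(t,1)\equiv 1$ on $[0,t_c]$ and is the exact counterpart of $\frac{d}{dt}\bigl(e^{t-1}/t\bigr)\big|_{t=1}=0$ for the \ER process.
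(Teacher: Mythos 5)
Your proof follows the same route as the paper's: solve the quasi-linear PDE by characteristics to obtain an implicit equation $\mathcal{E}(t,z,P)=0$, apply the singular implicit function theorem to get a square-root singular expansion, transfer, and then expand $\rho(t)$ around $t_c$. Two local steps differ in execution. For $\rho'(t_c)=0$ you use the identity $\mathcal{E}(t,1,1)\equiv 0$ (which the paper also records immediately after \eqref{zeqft}), differentiate in $t$ to get $\mathcal{E}_t(t,1,1)\equiv 0$, and conclude via $\mathcal{E}_P=0$ at the branch point; the paper's Proposition \ref{rhop} reaches the same conclusion by evaluating the explicit $G_1$ at $(t_c,1,1)$. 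Your route is cleaner. For $\rho''(t_c)>0$, however, the parenthetical ``$\rho(t)>1$ for $t\neq t_c$ nearby forces $\rho''(t_c)>0$'' is, as stated, incorrect: an analytic function with a strict minimum and $\rho'(t_c)=0$ can still have $\rho''(t_c)=0$ with the first nonzero term of order four. What saves you is your aside about $s_1$ having a simple pole, and it should be promoted from aside to argument: from the square-root expansion $P(t,z)\sim\tau(t)-h(t,\rho(t))(1-z/\rho(t))^{1/2}$ one gets $s_1(t)=P_z(t,1)\asymp(\rho(t)-1)^{-1/2}$ as $t\uparrow t_c$, and matching this with $s_1(t_c-\eps)\sim\bigl(\eps(1-x_1^2(t_c))\bigr)^{-1}$ (established in the proof of Lemma \ref{sublemma}) pins down $\rho(t_c-\eps)-1=\Theta(\eps^2)$, i.e.\ $\rho''(t_c)>0$. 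The paper instead computes $\rho''(t_c)$ directly in Proposition \ref{rhopp}, arriving at $G_{13}G_{31}/(G_2G_{33})$ evaluated at $(t_c,1,1)$ with each factor positive; your version is shorter by leaning on the susceptibility blow-up rate, the paper's stays internal to the generating-function machinery. Finally, the nondegeneracy conditions $\mathcal{E}_z\neq 0\neq\mathcal{E}_{PP}$ that you flag as ``to be checked'' must indeed be verified before the singular implicit function theorem applies (and also before you may invoke analyticity of $t\mapsto\rho(t)$); the paper does this in Proposition \ref{fzfyy} from the explicit form of $F$, and it should not be left implicit.
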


Notice that Theorem \ref{smallpointsthm} is a statement about the asymptotic behavior of the infinite sequence of deterministic functions $\{x_i(t)\}_{i=1}^{\infty}$ (which does not involve $n$ at all), not about the random graph process.  In particular, we do not apply Theorem \ref{smallpointsthm} for $i=i(n)$.  Instead we will apply it for $i = O(\eps^{-2})$ in proving Theorem \ref{supercriticalthm}, for $\eps>0$ a constant with respect to $n$.

\vf

In the rest of the paper we prove our main results; in Section \ref{subsec}, Theorem \ref{subcriticalthm}; in Section \ref{supsec}, Theorem \ref{supercriticalthm}; and in Section \ref{smallsec}, Theorem \ref{smallpointsthm}.

\section{Subcritical Regime: Proof of Theorem 1}
\label{subsec}

\subsection{Complexity of Components}

Our results in the barely subcritical case state that, as in the \ER process, no complex connected component emerges in the \BF before $t_c - \eps$ for any fixed $\eps > 0$.  Thus the \BF rule not only delays the formation of a giant component but also the formation of bicyclic components.  Our proof makes a connection between the rate at which unicyclic components are formed and the susceptibility of the graph.

To prove Theorem \ref{subcriticalthm} we first prove the following lemma concerning the number of cyclic components in the subcritical \BF process:

\begin{lem}
\label{sublemma}
At time $t_c -\eps$, the number of cyclic components in the \BF process is asymptotically Poisson with mean $\mu_\eps$, and  $ \mu _\eps \sim  \frac12\log \frac{1} \eps$  as $\eps \to 0$.
\end{lem}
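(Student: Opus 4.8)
The plan is to track the creation of cyclic (unicyclic) components over the course of the process and relate the rate of their formation to the susceptibility $S(j)$. A new cycle is created at a given step precisely when the edge added joins two vertices that already lie in the same component; since before $t_c-\eps$ all components are whp simple (this will be shown separately, but for the purposes of counting cyclic \emph{components} we only need that a cycle-creating step inside a tree turns that tree into a unicyclic component, and that subsequent cycle-creating steps are rare enough not to produce bicyclic components whp). First I would compute, for a single step $j$ at time $t = 2j/n$, the conditional probability that the added edge closes a cycle. In the \BF process, with probability $1-X_1(j)^2/\binom{n}{2}\cdot(1+o(1)) = 1-x_1(t)^2+o(1)$ the second edge is added, and that edge is (essentially) a uniform random edge; it closes a cycle with probability $\frac{1}{\binom{n}{2}}\sum_{C_i}\binom{|C_i|}{2} = \frac{1}{n}S(j)(1+o(1))$. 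With the complementary probability the first edge is added, but the first edge is used only when it joins two isolated vertices, so it never closes a cycle. Hence the expected number of cyclic components created at step $j$ is $\approx \frac{1}{n}(1-x_1(t)^2)S(j)\,$ — wait, more carefully: the probability the added edge is the second edge and it closes a cycle, so it is $\frac{1}{n}(1-x_1(t)^2)\,s(t) + o(1/n)$, where $s(t)=s_1(t)$ is the deterministic limit of $S(j)/1$ established by Spencer–Wormald.

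Summing over the $j$ from $0$ to $(t_c-\eps)n/2$ steps, and using the concentration $S(j) = s(2j/n)+o(1)$ whp together with $X_1(j)/n = x_1(2j/n)+o(1)$, the expected total number of cyclic components at time $t_c-\eps$ converges to
\[
\mu_\eps = \int_0^{t_c-\eps} \frac{1}{2}\,(1-x_1(t)^2)\,s(t)\,dt .
\]
The next step is to identify this integrand with an exact derivative. The ODE for $s(t)=s_1(t)$ for the \BF process (derived from the system in Section \ref{ODEanalysis}) should have the form $s'(t) = (1-x_1^2(t))\,s^2(t) + (\text{lower-order polynomial terms in } x_1)$; concretely, differentiating the susceptibility along the process, $s'(t) = (1-x_1^2(t)) s(t)^2 + 2 x_1^2(t) s(t)$-type expression — I would extract the precise form from $P(t,z)$ by taking $\partial_z$ at $z=1$. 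The key algebraic point is that $\frac{(1-x_1^2)s}{2}$ equals $\frac{d}{dt}\big(\tfrac12\log s(t)\big)$ up to a correction coming from the polynomial terms; since near $t_c$ the susceptibility blows up like $s(t)\sim \frac{A}{t_c-t}$ (the simple-pole blow-up established in \cite{spencer2007birth}), we get $\tfrac12\log s(t_c-\eps) = \tfrac12\log\frac{A}{\eps} + O(1) = \tfrac12\log\frac1\eps + O(1)$, and the contributions of the polynomial/lower-order terms to the integral remain bounded as $\eps\to0$. This yields $\mu_\eps \sim \tfrac12\log\frac1\eps$.

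Finally, for the Poisson approximation I would use the method of moments or a Chen–Stein / martingale argument: the events ``a cycle is closed at step $j$'' are, conditionally, nearly independent across steps (each involves a fresh pair of random edges), each has probability $O(\log n /n)$ individually so no single step dominates, and the sum of probabilities converges to $\mu_\eps$; one checks the factorial moments $\E[(N)_r] \to \mu_\eps^r$ by noting that the locations of distinct cycle-closings are asymptotically independent (the susceptibility is not appreciably changed by the deletion of $O(1)$ components of size $O(\log n)$, so the conditional rate is essentially unchanged). I expect the main obstacle to be making this last independence/concentration argument rigorous uniformly up to $t_c-\eps$: one must control the error terms in $S(j) = s(2j/n)+o(1)$ and rule out that a rare large fluctuation of the susceptibility, or the rare event of a bicyclic component forming, distorts the count — this is where the restriction to fixed $\eps$ (rather than $\eps\to0$ with $n$) and the exponential-in-$\eps^{-1}$ error bounds from \cite{spencer2007birth} get used.
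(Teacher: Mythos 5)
Your proposal is essentially the paper's own argument: express the per-step probability of closing a cycle as $\frac{1}{n}(1-x_1^2(t))s(t)$, sum over steps to get $\mu_\eps = \frac12\int_0^{t_c-\eps}(1-x_1^2(t))s(t)\,dt$, cite a Poisson-approximation theorem (the paper uses Barbour--Holst--Janson), and extract the $\frac12\log\frac1\eps$ asymptotics from the simple-pole blow-up of $s$. Your route to the asymptotics (recognize $(1-x_1^2)s\approx \frac{s'}{s}=(\log s)'$ and use $s(t_c-\eps)\sim A/\eps$) is equivalent to the paper's, which works with $(1/s)'\to -(1-x_1^2(t_c))$ to get $s(t_c-\eps)\sim\frac{1}{\eps(1-x_1^2(t_c))}$. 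Two small slips worth noting: the actual ODE from Spencer--Wormald is $s'(t)=x_1^2(t)+(1-x_1^2(t))s^2(t)$, not the $2x_1^2 s$ form you guessed (the lower-order term is bounded either way, so the conclusion survives); and your intermediate $1-X_1^2/\binom n2$ is off by a factor of two from the correct $1-X_1^2/n^2$, though you write the correct limiting form $1-x_1^2(t)$ in the end.
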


\begin{proof}
We first begin by fixing $\eps>0$ and conditioning on the concentration of the values of $X_1(j)$ and $S(j)$.  Spencer and Wormald \cite{ spencer2007birth} show that with error probability $O(\exp(-n^{1/5}))$ we have $\frac{X_1(t \frac{n}{2})}{n} = x_1(t) + o(1)$ and $S(j) = s(t) + o(1)$ for all $t \le t_c -\eps$.

At step $j$ of the \BF process, conditioned on the event that $X_1$ and $S$ are concentrated near their deterministic counterparts, the probability that an edge is added within an existing component is
\[ \left (1 - \frac{X_1^2(j)}{n^2} \right ) \sum_{C_i} \frac{| C_i|^2}{n^2} = \frac{1}{n}(1 - x_1^2(2j/n )) s(2j/n) + o(n^{-1}).\]
We run the process for $\frac{(t_c- \eps) n }{2}$ steps, and so the number of cyclic components created converges to a Poisson random variable (see Barbour, Holst, Janson \cite{barbour1992poisson}) with mean
\[ \mu_\eps = \frac{1}{n} \sum_ {j=1} ^{\frac{(t_c- \eps) n }{2}} (1 - x_1^2(2j/n))s(2j/n) + o(1), \]
where the $o(1)$ is the error in the approximation of $x_1$ and $s$.  We can approximate $\mu_\eps$ by an integral as $n \to \infty$, with $dt \sim \frac{2}{n} dj$:
\begin{equation}
\label{muepseq}
 \mu_\eps = \frac{1}{2} \int_{t=0}^{t_c- \eps} (1- x_1^2(t))s(t) \, dt + o(1).
\end{equation}

\vf

To compute this integral, we need an expression for $s(t)$.  From Spencer and Wormald \cite{spencer2007birth} we have
\[ s^{\prime}(t) =  x_1^2(t) + (1- x_1^2(t)) s^2(t) \]
with the initial condition $s(0) =1$.  In particular, this differential equation blows up at $t = t_c$.  As we are interested in the limit as $\eps \to 0$, we need the asymptotics of $s(t_c -\eps)$ as $\eps \to 0$.  We compute:
\[ \left (\frac{1}{s(t)}  \right ) ^\prime = \frac{-1} {s^2(t) }s^\prime (t) = \frac{-x_1^2(t) }{s^2(t)} - (1-x_1^2(t))  \]
\[ \to -( 1-x_1^2(t_c)) \text{ as } t \to t_c. \]
So $\frac{1}{s(t_c- \eps)} \sim \eps (1-x_1^2(t_c))$ and $s(t_c - \eps) \sim \frac{1}{\eps (1-x_1^2(t_c))}$.  Now we integrate the right-hand side of equation (\ref{muepseq}) and find $\mu_\eps \sim \frac{1}{2} \log \frac{1}{\eps}$ as $\eps \to 0$.

\end{proof}

We now complete the proof of Theorem \ref{subcriticalthm} and prove that whp there are no complex components at $t_c - \eps$.
At step $j$, the probability of creating a bicyclic component is
\[  \left (1 - \frac{X_1^2(j)}{n^2} \right ) \sum_{C_i, C_k \text{ unicyclic }} \frac{|C_i| |C_k|  }{n^2}. \]
The process at $t_c - \eps$ is subcritical so $|C_i| \le K(\eps) \log n$ for all $i$ whp, where $K(\eps)$ is a constant independent of $n$ (see \cite{spencer2007birth}),  and  from Lemma \ref{sublemma}, whp the number of unicyclic components is $\le \log n$ (or any other function $r(n) = \omega(1)$). The expected number of bicyclic components created before $t_c - \eps$ is therefore
\[ \le (t_c -\eps) \frac{n}{2} \frac{K^2(\eps) \log ^ 4 n}{n^2} = o(1), \]
so whp there are no complex components at $t_c - \eps$.

\subsection{Size of the Largest Subcritical Component}
\label{largestsubsec}

Theorem \ref{c1sublowerthm} provides the lower bound of $\Omega(\eps^{-2} \log n)$ for $|C_1(t_c -\eps)|$.

\begin{thm}
\label{c1sublowerthm}
There exists an absolute constant $K$ so that whp,
\begin{equation}
|C_1(t_c-\eps)| \ge K \eps^{-2} \log n.
\end{equation}
\end{thm}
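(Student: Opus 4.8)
The plan is a two-phase ``grow, then coalesce'' argument. In the first phase we run the \BF process up to the still-subcritical time $t_1:=t_c-2\eps$ and use it to produce many medium-sized components; in the second phase we run the remaining $\tfrac{\eps}{2}\,n$ steps, up to $t_c-\eps$, and show that these components merge enough to force a single component of size $\Omega(\eps^{-2}\log n)$.

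\emph{Phase 1.} Put $L:=\eps^{-2}$. For each fixed value of $i$ in the range $[L,2L]$ (a \emph{finite} set, once $\eps$ is fixed), the concentration statement \eqref{concentration} gives $X_i(t_1)=x_i(t_1)\,n+o(n)$ whp, while Theorem \ref{smallpointsthm} at $t_1=t_c-2\eps$ and $i=\Theta(\eps^{-2})$ gives $x_i(t_1)=\Theta(\eps^{3})$ (note $(2\eps)^2 i=\Theta(1)$ on this range, so the exponential factor is $\Theta(1)$). Summing over the $\Theta(\eps^{-2})$ relevant $i$, whp the number $N_M$ of components of $G(t_1)$ of size in $[L,2L]$ satisfies $N_M\ge c\,\eps^{3}n$ for an absolute constant $c>0$. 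Call these the \emph{blobs}; each has size in $[\eps^{-2},2\eps^{-2}]$. (This is the only use of Theorem \ref{smallpointsthm}, and only at scale $i=\Theta(\eps^{-2})$.)

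\emph{Phase 2.} Condition on the Phase 1 outcome and on the Spencer--Wormald event $\mathcal E$ \cite{spencer2007birth} that $X_1(j)/n$ stays within $o(1)$ of $x_1(2j/n)$ throughout. Let $H$ be the \emph{blob graph}: its vertices are the $N_M$ blobs, and $\{C,C'\}\in E(H)$ iff some edge added in Phase 2 directly joins a vertex of $C$ to a vertex of $C'$. A blob--blob edge is never the first presented edge (both endpoints are non-isolated), so it must be the (essentially uniform) second edge; and since distinct components share no edges, all $|C||C'|$ pairs between $C$ and $C'$ are available at every step. Hence, on $\mathcal E$, a fixed Phase-2 step $j$ joins $C$ and $C'$ with probability at least $(1-x_1^2(2j/n)-o(1))\tfrac{2|C||C'|}{n^2}(1-o(1))$, so over the $\tfrac{\eps}{2}n$ steps
\[
\P\bigl[\{C,C'\}\in E(H)\bigr]\;\ge\;1-\exp\!\bigl(-\bar\beta\,\eps\,|C||C'|/n\bigr)\;\ge\;p_0:=\tfrac12\bar\beta\,\eps L^{2}/n,
\]
where $\bar\beta>0$ is a lower bound for $1-x_1^2$ on $[t_1,t_c-\eps]$ (an absolute constant for small $\eps$, since $1-x_1^2(t_c)>0$), and we used $|C|,|C'|\ge L$. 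The crucial point is $N_M\,p_0\ge c\,\eps^{3}n\cdot\tfrac12\bar\beta\,\eps L^{2}/n=\tfrac12 c\,\bar\beta\,\eps^{4}L^{2}=\tfrac12 c\,\bar\beta$, a positive absolute constant --- this is precisely why $L=\Theta(\eps^{-2})$ is the right scale (larger blobs would be too few; smaller blobs give $N_Mp_0=o(1)$, making $H$ too sparse to have components of size $\omega(1)$). A breadth-first exploration of $H$ then shows that, from any blob, one uncovers at least the early generations of a Galton--Watson tree with $\mathrm{Binomial}(N_M-o(N_M),p_0)$ offspring, whose mean is the positive constant $N_Mp_0$; such a tree reaches size $\ge c'\log N_M$ with probability $\ge N_M^{-1/2}$, so exploring afresh from each of $N_M^{3/4}$ distinct blobs (each failed attempt consuming only $o(N_M)$ blobs) shows that whp $H$ has a connected set of $\ge c'\log N_M$ blobs. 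Every connected set of blobs in $H$ lies inside a single component of $G(t_c-\eps)$, so that component has $\ge L\cdot c'\log N_M$ vertices; since $\log N_M\ge\log(c\eps^{3}n)\ge\tfrac12\log n$ for $n$ large, we get $|C_1(t_c-\eps)|\ge\tfrac{c'}{2}\eps^{-2}\log n$ whp, i.e.\ the theorem with $K=c'/2$ absolute.

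\emph{Main obstacle.} Everything delicate is in Phase 2: converting the heuristic ``$H$ behaves like $G(N_M,p_0)$'' into an honest statement, given the online, history-dependent \BF rule (whether the first edge is accepted depends on the fluctuating isolated-vertex count) and the fact that blobs themselves merge and grow as Phase 2 proceeds. The intended remedy is to work throughout on $\mathcal E$, where the per-step acceptance probability is pinned near $1-x_1^2(2j/n)$, and to reveal $H$ by breadth-first exploration so that the set of newly found blobs at each step dominates a binomial; the remaining accounting --- availability of pairs, $1+o(1)$ corrections for ``no repeated edges'', independence across exploration steps, and the repeated-restart trick --- is routine but is where the care lies. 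We also use the standard fact from \cite{spencer2007birth} that $t_1$ is subcritical.
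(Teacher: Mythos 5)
Your proposal is correct and follows essentially the same route as the paper: both run the process to $t_c-2\eps$, identify $\Theta(\eps^3 n)$ components of size $\Theta(\eps^{-2})$ via Theorem~\ref{smallpointsthm}, and then ``sprinkle'' the remaining $\Theta(\eps n)$ steps to argue that the auxiliary graph on these blobs behaves like an Erd\H{o}s--R\'enyi graph $G(N,d/N)$ with $d>0$ a constant, yielding a blob-component of $\Omega(\log N)=\Omega(\log n)$ blobs and hence a component of $\Omega(\eps^{-2}\log n)$ vertices. The only cosmetic difference is that the paper cites the original Erd\H{o}s--R\'enyi result on the coupled auxiliary graph, while you unpack that final step via a BFS/Galton--Watson restart argument; the coupling subtlety you flag as the ``main obstacle'' is the same one the paper's proof passes over with the phrase ``couple $G_M$ with an \ER random graph.''
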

\begin{proof}
We will use the so-called ``sprinkle'' from $t_c - 2 \eps$ to $t_c -\eps$; that is, we first run the \BF process to time $t_c - 2 \eps$, and then, conditioned on the resulting graph with certain properties that hold whp, we run the \BF process from that graph to time $t_c -\eps$.

We begin at time $t_c -2\eps$ and call vertices in components of size between $m_1 \eps^{-2}$ and $m_2 \eps^{-2}$ \textit{medium} vertices, where  $m_1<m_2$ are constants which will be determined later.  Call the subgraph of medium vertices $G_M$.  We run the process from time $t_c - 2 \eps$ to $t_c -\eps$ and show that a connected component of size $\ge K \eps^{-2} \log n$ emerges whp in $G_M$, for some constant $K$.  This suffices for a lower bound on $|C_1|$ for the entire graph.  We call each connected component in $G_M$ at time $t_c - 2 \eps$ a `node'.
Using Theorem \ref{smallpointsthm}  we know that
whp the number of nodes in $G_M$ is
\begin{eqnarray*}
&\sum_{i=m_1 \eps^{-2}}^{m_2 \eps^{-2}} (x_i(t_c -2\eps) +o(1) ) \frac{n}{i}
 \ge c(m_1, m_2) \eps^{3} n
\end{eqnarray*}
for some constant $c(m_1, m_2)$.
Each pair of nodes has at least $m_1^2 \eps^{-4}$ potential edges between them. So from $t_c - 2 \eps$ to $t_c -\eps$, since we add at least $\frac{\eps n}{20}$ random edges as the second in a pair, 
each pair of nodes is joined with probability at least ${ m_1^2 \eps^{-3}}{n^{-1}}/20$.  
Now we couple $G_M$ with an \ER random graph that has $N=c(m_1, m_2) \eps^{3} n$ nodes and the probability of joining two nodes is $\frac{d}{N}$ with $d$ at least ${ m_1^2 c(m_1, m_2)/20}$ a constant independent of $n$. 
From the original results of \erdos and \renyi \cite{erdős1960evolution} we know that whp a connected component of $\ge K(d) \log N$ nodes forms in the graph.
Since each node has $\ge m_1 \eps^{-2}$ vertices, whp we have a connected component with $\ge K(m_1,m_2) m_1 \eps^{-2} \log n$ vertices at time $t_c - \eps$.

\end{proof}

We conjecture a matching upper bound\footnote{Subsequent to this work the authors of \cite{bhamidi2011bohman} have made substantial progress on this conjecture, proving an upper bound of $O(\eps^{-2} (\log n)^4)$ on $|C_1(t_c-\eps)|$.}:
\begin{conj}
 \label{ubconj}
There exists a constant $K_u$ so that whp
\begin{equation}
|C_1(t_c-\eps)| \le K_u \eps^{-2} \log n.
\end{equation}
\end{conj}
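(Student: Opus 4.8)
\textbf{Proof proposal for Conjecture \ref{ubconj}.}

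The plan is to prove the upper bound by a branching-process domination argument, exploiting the fact that at $t_c-\eps$ the susceptibility is of order $\eps^{-1}$ (from Lemma \ref{sublemma}'s proof, $s(t_c-\eps)\sim\frac{1}{\eps(1-x_1^2(t_c))}$) and that components are simple whp (Theorem \ref{subcriticalthm}). First I would fix $\eps>0$ and condition on the whp event that $X_1(j)$ and $S(j)$ track their deterministic counterparts $x_1(2j/n)$ and $s(2j/n)$ with error $o(1)$ for all $j\le(t_c-\eps)n/2$, and that all components are simple. The key structural observation is that when the $j$-th edge is added, it attaches to the component of a fixed vertex $v$ with probability proportional to that component's size: roughly, the expected growth of $|K(v)|$ at step $j$, conditioned on the current graph, is governed by a size-biased mechanism whose ``effective rate'' is controlled by $(1-x_1^2(2j/n))|K(v)|/n$ on the tree-edge side and the analogous term for second edges. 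The goal is to dominate the trajectory $\big(|K(v)(t)|\big)_{t\le t_c-\eps}$ by a supercritical-free (actually subcritical, in the sense of bounded expected size) continuous-time branching-type process and extract an exponential tail $\P(|K(v)(t_c-\eps)|\ge m)\le A e^{-a\eps^2 m}$, uniformly in $v$ and with $A,a$ absolute constants. This matches the shape in Theorem \ref{smallpointsthm}, and indeed one should be able to read off the same tail directly from that theorem applied with $i$ ranging up to $K_u\eps^{-2}\log n$: since $\sum_i x_i(t_c-\eps)\le 1$ and $x_i(t_c-\eps)=C(\eps)i^{-3/2}e^{-D(\eps)\eps^2 i}(1+O(1/i))$, the expected number of vertices in components of size $\ge K_u\eps^{-2}\log n$ is at most $n\sum_{i\ge K_u\eps^{-2}\log n}x_i(t_c-\eps)=O\big(n\cdot e^{-D(\eps)K_u\log n/2}\big)=O(n^{1-cK_u})$, which is $o(1)$ once $K_u$ is a large enough absolute constant.

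The cleanest route, then, is a first-moment argument: let $Z$ be the number of vertices lying in components of size exactly $k$ for $k\ge K_u\eps^{-2}\log n$. By the concentration result \eqref{concentration} we only directly control $\E[X_k/n]=x_k(t_c-\eps)+o(1)$ for $k$ fixed relative to $n$, which is not enough since here $k$ grows with $n$; so the genuine work is to establish the tail bound $\E[X_k(t_c-\eps)]\le A n e^{-a\eps^2 k}$ \emph{uniformly for all} $k$ up to linear order, not merely for fixed $k$. I would obtain this by a direct exploration/BFS argument on the component of a uniformly random vertex, run against the \BF process: exploring $K(v)$ in breadth-first order, each newly revealed vertex contributes, on average, at most $(1-x_1^2(t_c))s(t_c-\eps)/n\cdot n + o(1)$-ish expected children when we integrate the attachment rate over $[0,t_c-\eps]$, and the relevant branching number is $\int_0^{t_c-\eps}(\text{rate})\,dt$, which is bounded below $1$ by a margin of order $\eps$ because $s(t)$ only blows up at $t_c$. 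Turning this ``mean offspring $1-\Theta(\eps)$'' statement into a clean supermartingale that holds for the actual process (with the size-biasing and the dependence between the two presented edges handled honestly) gives, via a standard exponential-supermartingale/Chernoff computation, the desired uniform tail $\E[X_k(t_c-\eps)]=O(n e^{-a\eps^2 k})$.

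With the uniform tail in hand, the proof concludes in two lines: $\E[\#\{v:|K(v)|\ge K_u\eps^{-2}\log n\}]\le\sum_{k\ge K_u\eps^{-2}\log n}An e^{-a\eps^2 k}=O(\eps^{-2}n e^{-aK_u\log n})=O(\eps^{-2}n^{1-aK_u})$, which is $o(1)$ for $K_u>1/a$ an absolute constant, so by Markov's inequality whp no such vertex exists, i.e. $|C_1(t_c-\eps)|<K_u\eps^{-2}\log n$. The main obstacle, and the reason this is stated as a conjecture rather than proved, is precisely the first step: getting the offspring/attachment tail bound to hold \emph{uniformly across $k$ growing with $n$}. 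The differential-equation method of \cite{spencer2007birth} and the singularity analysis behind Theorem \ref{smallpointsthm} both operate at fixed $k$ (or fixed $\eps$), and controlling the process on the scale $k\asymp\eps^{-2}\log n$ requires quantitative martingale concentration for the exploration that does not degrade as $n\to\infty$ — in particular one must rule out that rare large fluctuations of $X_1(j)$ or $S(j)$, or the correlations introduced by the ``add the second edge'' rule, conspire to inflate some component beyond the branching-process prediction. Handling the second edge carefully (it is, conditioned on the first being rejected, essentially a uniform random edge, so it contributes size-biased attachment with the full susceptibility $s(t)$) is where the constant $D(\eps)\sim d$ and hence the value of $a$ gets pinned down; I expect the honest bound to require a sprinkling decomposition into $O(\eps^{-1})$ short sub-intervals on each of which $s(t)$ is nearly constant, proving the exponential tail inductively across sub-intervals.
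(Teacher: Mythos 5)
The statement you are addressing is stated in the paper as a conjecture, not a theorem: the paper offers no proof, and a footnote records that even subsequent work \cite{bhamidi2011bohman} achieves only the weaker bound $O(\eps^{-2}(\log n)^4)$. Your proposal correctly identifies the natural strategy (a first-moment bound via an exponential tail on the component-size distribution) and, to your credit, correctly identifies why it does not go through --- but that means what you have written is a research plan whose central lemma is unproved, not a proof. The gap is exactly where you locate it: you need $\E[X_k(t_c-\eps)] \le A\, n\, e^{-a\eps^2 k}$ uniformly for $k$ up to the scale $\eps^{-2}\log n$, and neither Theorem \ref{smallpointsthm} nor the concentration statement (\ref{concentration}) delivers this. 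Theorem \ref{smallpointsthm} concerns the deterministic functions $x_i(t)$ only (the paper is explicit that it is applied for $i=O(\eps^{-2})$, fixed relative to $n$), and (\ref{concentration}) carries an additive $o(1)$ error whose contribution $o(n)$ swamps $n\,x_k=O(n^{1-cK_u})$ precisely in the range of $k$ you need. So your displayed computation $n\sum_{i\ge K_u\eps^{-2}\log n}x_i(t_c-\eps)=O(n^{1-cK_u})$ bounds the wrong object: a sum of $n\to\infty$ limits rather than the expectation of the random variable $\#\{v:|K(v)|\ge K_u\eps^{-2}\log n\}$.

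The replacement you sketch --- dominating the BFS exploration of $K(v)$ by a branching process with mean offspring $1-\Theta(\eps)$ --- is the right idea in spirit, but the difficulties you acknowledge in passing are the whole problem. The offspring distribution at step $j$ depends on $S(j)$ and $X_1(j)$, which are themselves random and known to concentrate only with error $o(1)$, not with a quantitative rate that survives a union bound over $k\asymp\eps^{-2}\log n$ scales; the exploration conditions on the past of the process, which biases both the first-edge and second-edge choices in ways the paper only controls coarsely (compare Proposition \ref{Propnonisol}, where such conditioning is handled, but only to get a constant bound like $2/3$, far too weak here); and the attachment rate to $K(v)$ is size-biased by $|K(v)|$ itself, so the ``branching number'' $\int_0^{t_c-\eps}(\text{rate})\,dt$ is a functional of the very trajectory being controlled rather than a fixed subcritical constant. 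Making the supermartingale/Chernoff step rigorous at this scale is the open content of Conjecture \ref{ubconj}, not a routine verification. As it stands, your argument establishes the conjecture only conditionally on an unproved, and genuinely difficult, uniform tail estimate.
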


\section{Supercritical Regime: Proof of Theorem 2}
\label{supsec}
First we shall prove the following lemmas in Sections~\ref{upperC2},~\ref{lowerC2}, which correspond to the first part of Theorem \ref{supercriticalthm} that $|C_2(t_c + \eps) | = \Theta(\eps^{-2} \log n)$.
\begin{lem}
\label{BFsuperupperlem}
There exists a  constant $K_u$ so that whp $|C_2(t_c + \eps) |\le K_u \eps^{-2} \log n$.
\end{lem}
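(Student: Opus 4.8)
The plan is to establish the upper bound $|C_2(t_c+\eps)| \le K_u \eps^{-2}\log n$ by a sprinkling argument running in the opposite direction to the lower-bound sketch: instead of building a large component, we show that any component that is \emph{not} the giant must be small. First I would fix $\eps>0$, run the \BF process to a time $t_c + \delta$ for a suitably small fixed $\delta = \delta(\eps)$ (say $\delta = \eps/2$ or $\delta$ a small constant multiple of $\eps$), and invoke the Janson--Spencer result quoted in the excerpt: whp there is a unique giant component $C_1(t_c+\delta)$ of linear size $\ge \gamma\delta n/2$. At that point I would also condition on the whp-events that the susceptibility \emph{restricted to the non-giant components} is $O(\eps^{-1})$ and that $X_1(j)/n$ is concentrated, both of which follow from the differential-equation-method machinery of Spencer--Wormald and Janson--Spencer (the full susceptibility blows up only because of the giant; with the giant removed, the "second susceptibility" $S_1' := \frac1n\sum_{C_i \ne C_1}|C_i|^2$ stays of order $\eps^{-1}$ just past criticality, matching the \ER picture).

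Next, the core step: run the process from $t_c+\delta$ down toward... actually forward from $0$ is cleaner conceptually, so let me instead argue directly. The key observation is that a component of size $s$ that is never absorbed into the giant by time $t_c+\eps$ is a rare event when $s$ is large. I would bound the probability that a fixed vertex $v$ lies in a non-giant component of size $\ge K_u\eps^{-2}\log n$ by exploiting the growth dynamics: in each step from $t_c$ to $t_c+\eps$, a component of current size $s$ (not yet the giant) merges with another component of size $r$ with probability $\approx (1-x_1^2)\, r\, / (n/2)$ per step summed over the $\Theta(\eps n)$ relevant steps, so it merges with the giant with probability $\ge (1-x_1^2(t_c))\cdot c\delta$ in the window $[t_c+\delta/2, t_c+\delta]$ once $s \gg \eps^{-2}$ — because a component of size $s$ and the giant of size $\Theta(\delta n)$ have $\Theta(s\delta n)$ potential edges between them, each appearing as the "second edge" with probability $\Theta(n^{-1})$ over $\Theta(\eps n)$ steps, giving merge probability $1 - e^{-\Theta(s \delta \eps)}$. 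Setting $s = K_u \eps^{-2}\log n$ makes this $1 - e^{-\Theta(K_u \log n)} = 1 - n^{-\Theta(K_u)}$, so a union bound over the $\le n$ vertices (or over the $\le n/s$ candidate large components) kills the probability that any non-giant component survives at size $\ge K_u\eps^{-2}\log n$, for $K_u$ a large enough absolute constant. This is essentially the \ER argument of {\L}uczak adapted to the \BF dynamics, using the bounded-size structure so that $x_1(t)$ and the edge-placement probabilities behave as in the deterministic ODE.

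The main obstacle I expect is making the "merges with the giant" probability estimate rigorous in the presence of dependence between the two presented edges and the fact that the giant's size is itself a random (though concentrated) quantity that is growing over the window. Specifically, one must be careful that conditioning on a particular non-giant component \emph{not} growing does not distort the distribution of where the second edge lands, and that the giant at time $t_c + \delta/2$ already has linear size (not merely at $t_c+\delta$) — this requires that the Janson--Spencer lower bound $\gamma\eps - K\eps^{4/3}$ on $|C_1|/n$ be applied at the earlier time $t_c+\delta/2$, which is fine as long as $\delta$ is a fixed multiple of $\eps$. A clean way to handle the dependence is to only use the \emph{second} edge in each step (which is uniform over all non-present edges, independent of the first edge's fate) and to note that at least a constant fraction of steps in the window add their second edge; this sacrifices constants but not the order of magnitude. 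I would also need the a-priori bound that \emph{all} non-giant components have size $O(\eps^{-2}\log n \cdot \text{polylog})$ or at least $o(n)$ to ensure there is a well-defined unique giant to merge into; this can be bootstrapped from the subcritical bound of Spencer--Wormald applied at $t_c - \eps'$ plus a crude argument that only one linear-sized component can form, exactly as in the \ER case.
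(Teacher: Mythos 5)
Your high-level strategy matches the paper's: sprinkle forward from $t_c+\eps/2$, observe the giant already has linear size there (Janson--Spencer), and argue that any candidate component of size $\ge K_u\eps^{-2}\log n$ must, over the remaining $\Theta(\eps n)$ steps, receive an edge to the giant because the $\Theta(s\cdot\eps n)$ potential bridging edges cannot all be avoided. That is exactly the paper's $\E X = o(1)$ computation, and your exponent bookkeeping gives the right $n^{-\Theta(K_u)}$.

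However, there is a genuine gap in how you propose to handle the dependence. You suggest ``only use the second edge in each step,'' but the issue is not just that the second edge is sometimes not added; it is that \emph{which} edges you count as potential bridges matters for the coupling to be valid. If a bridging edge lands on a vertex of the candidate component that is still isolated at $t_c+\eps/2$, then adding (or not adding) that edge changes the isolated-vertex count, which in turn changes the Bernoulli ``do we use the second edge'' decisions at all later steps and distorts the rest of the process. You cannot simply sprinkle such edges independently. The paper resolves this by restricting the sprinkled ``red'' pairs to second edges whose $G^{\text{small}}$-endpoint is \emph{non-isolated} at $t_c+\eps/2$: adding those edges provably does not touch the isolated-vertex process, so the coupling is clean. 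But then a new problem appears which your sketch does not address at all: a candidate component of size $\ge K_u\eps^{-2}\log n$ at $t_c+\eps$ might consist almost entirely of vertices that were isolated at $t_c+\eps/2$, leaving too few valid targets for red edges. The paper needs a separate BFS argument (their Proposition~\ref{Propnonisol}) to show that whp any such component has $\ge\frac{1}{10}K_u\eps^{-2}\log n$ non-isolated vertices; that proposition rests on a claim that a BFS child is isolated with conditional probability $<2/3$, which uses $t_c>1$ and the ODE for $x_1$. Without that ingredient your count of $\Theta(s\delta n)$ usable bridging edges is not justified, and the union bound does not go through. Your remarks about restricted susceptibility and uniqueness of the giant are reasonable but peripheral to this central issue.
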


\begin{lem}
\label{superlowerlem}
There exists a  constant $K_l$ so that whp $|C_2(t_c + \eps) |\ge K_l \eps^{-2} \log n$.
\end{lem}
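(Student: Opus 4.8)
The plan is a sprinkling argument in the spirit of the proof of Theorem~\ref{c1sublowerthm}, run from $t_c+\frac{\eps}{2}$ to $t_c+\eps$ and arranged so that the large component it produces provably avoids the giant. First I would run the \BF process to $t_c+\frac{\eps}{2}$ and condition on the whp events that $X_1$, $S$ and each $X_i$ with $i\le m_2\eps^{-2}$ are concentrated near their deterministic limits (Spencer--Wormald, Janson--Spencer) and that there is a unique giant of size $|C_1(t_c+\eps)|=\Theta(\eps n)$ (Janson--Spencer; uniqueness of the giant). Call a vertex \emph{medium} if at time $t_c+\frac{\eps}{2}$ it lies in a component of size in $[m_1\eps^{-2},m_2\eps^{-2}]$, where $m_1<m_2$ are constants with $m_1$ \emph{small}; no medium vertex lies in the giant. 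By Theorem~\ref{smallpointsthm} --- applicable precisely because the range $m_1\eps^{-2}\le i\le m_2\eps^{-2}$ consists of $O(\eps^{-2})$ values, all constants --- together with \eqref{concentration}, the number $N$ of components of the medium subgraph $G_M$ (the \emph{nodes}) is $\Theta(\eps^3 n)$ whp.

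Next I would sprinkle from $t_c+\frac{\eps}{2}$ to $t_c+\eps$, revealing the edges with both endpoints medium \emph{last}. Conditioned on their number, which is $\Theta(\eps^3 n)=\Theta(N)$, these edges form on the nodes a uniform random graph, i.e.\ an \ER graph $G(N,d/N)$; here $d=d(m_1,m_2)$ is an absolute constant, and taking $m_1$ small makes $d<1$, so the node-graph is subcritical with component-size decay rate $c_d=d-1-\log d>0$. All the other sprinkle edges, revealed first, determine which nodes end up in the giant without help from within-$G_M$ edges: call these the \emph{doomed} nodes, forming a set $X$. Since a random medium node has only $\Theta(m_1)$ expected edges towards vertices of the giant (and a correspondingly bounded probability of being pulled in through the short, subcritically branching chains of small components available to it), one gets $|X|=\beta N(1+o(1))$ whp for an absolute constant $\beta=\beta(m_1)<1$ with $\beta\to0$ as $m_1\to0$. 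The two key points are: $X$ is (conditionally) independent of the \ER graph on the nodes, because within-$G_M$ edges are exposed last; and, because within-$G_M$ edges join only medium vertices, a node-component of $G(N,d/N)$ that is disjoint from $X$ consists of medium vertices forming a component of the graph at $t_c+\eps$ that is disjoint from $C_1(t_c+\eps)$.

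Then I would count. In subcritical $G(N,d/N)$ the number of components with $\ge k$ nodes is, up to a $\mathrm{poly}(k)$ factor, $\Theta(N e^{-c_d k})$ whp, and since $X$ is independent with density $\beta$, each such component misses $X$ with probability $\approx(1-\beta)^{k}$; hence the expected number of $X$-free node-components with $\ge k$ nodes is, up to a $\mathrm{poly}(k)$ factor, $\Theta(N e^{-c^{*}k})$ with $c^{*}=c_d+\log\frac{1}{1-\beta}$. Taking $k=K_l'\log n$ for a positive constant $K_l'<1/c^{*}$ makes this $n^{1-o(1)}\to\infty$, and a second-moment estimate --- for disjoint node-sets the events ``is an \ER component of size $\ge k$'' and ``misses $X$'' each factorize up to $1+o(1)$ --- shows that whp such a component exists. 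It has $\ge K_l'\log n$ nodes, each with $\ge m_1\eps^{-2}$ vertices, so at least $K_l\eps^{-2}\log n$ vertices with $K_l=K_l'm_1$, and it lies outside the giant; hence $|C_2(t_c+\eps)|\ge K_l\eps^{-2}\log n$ whp.

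The hard part is the giant. A single component of size $\Theta(\eps^{-2}\log n)$ has $\Theta(\log n)$ expected edges to the linear-sized giant over a sprinkle of length $\Theta(\eps)$, so it is whp absorbed and the naive construction fails; the fix is to manufacture $n^{1-o(1)}$ candidate node-components of size $\Theta(\log n)$ and exploit that a polynomial fraction of them stay out of the giant. The parts I expect to be delicate are: setting up the exposure so that the within-$G_M$ edges really do form a clean \ER graph on the nodes that is independent of the doomed set $X$, and verifying the corresponding structural dichotomy for components; bounding $\beta$ away from $1$ (which amounts to controlling the short chains of small components through which a medium node could reach the giant --- here a crude a-priori bound on non-giant components, obtainable by the same method as Lemma~\ref{BFsuperupperlem}, is convenient); and the second-moment computation for the existence of an $X$-free node-component of size $\Theta(\log n)$.
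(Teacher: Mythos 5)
Your plan is in the right spirit and shares the paper's core architecture: partition by component size at a checkpoint, treat the medium components as ``nodes,'' view the medium--medium sprinkle edges as a random graph on the nodes, and use a second-moment argument to show that many node-components of size $\Theta(\log n)$ survive disconnected from the giant. The paper actually starts the sprinkle at $t_c - \eps$ (rather than $t_c + \eps/2$), so it does not yet have a giant to dodge; it therefore proves that at least \emph{two} good components survive, so that whichever one is not swallowed by/does not become the giant certifies $C_2$. Your choice to start at $t_c + \eps/2$ and condition on the giant already existing is a legitimate simplification that removes the need for the two-component trick. The paper also does not need the node graph to be subcritical: it directly counts isolated trees of $k = \Theta(\log n)$ nodes in the green-edge graph, which works at any density, whereas you lean on $m_1$ small to force $d<1$.

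The real gap is the structural dichotomy that your argument hinges on: \emph{``a node-component of $G(N,d/N)$ that is disjoint from $X$ \dots\ forms a component \dots\ disjoint from $C_1(t_c+\eps)$.''} This is false as stated, precisely because the giant can reach a non-doomed node-component via another node-component's medium--medium edges. Concretely, suppose $Y_1$ and $Y_2$ are two node-components of the ER node-graph, with $v_1\in Y_1\cap X$ (so $v_1$ is $H$-connected to the giant, where $H$ is the graph on all non-medium--medium sprinkle edges), $v_2\in Y_1\setminus X$, and some $v_3\in Y_2$ that is $H$-connected to $v_2$ through a chain of small vertices but not $H$-connected to the giant. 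Then $Y_2\cap X=\emptyset$, yet $Y_2$ lies in the giant's component at $t_c+\eps$ via the path $\text{giant} \leftrightarrow_H v_1 \leftrightarrow_{MM} v_2 \leftrightarrow_H v_3$. Enlarging $X$ to cure this (closing it under $H$-reachability from ER-components containing doomed nodes) makes $X$ depend on the ER node-graph and destroys the independence your second-moment computation relies on. The paper sidesteps exactly this issue: for a fixed good component $Y$, its ``dangerous'' set is taken to be all of $G_L\cup (G_M\setminus Y)$ \emph{together with every small vertex connected, in the final graph, to that set} (Proposition~\ref{smallprop}); showing $Y$ avoids this entire set --- and bounding its size by $O(\eps n)$ via a subcritical branching-process estimate --- is what makes $Y$'s final component provably separate from the giant. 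You would need an analogue of Proposition~\ref{smallprop}, and you would need to rephrase the ``avoidance'' event accordingly, before the $(1-\beta)^k$ calculation and the second-moment step are meaningful. As it stands, your $\beta<1$ bound is also only gestured at (``$\Theta(m_1)$ expected edges to the giant'' controls the direct exposure, not the indirect exposure through small-component chains, which is exactly what needs the branching-process estimate).
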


In Section~\ref{outsidegiant}  we shall prove the second part of Theorem \ref{supercriticalthm} that all components apart from the giant component are simple.

\subsection{Upper bound of $|C_2(t_c + \eps) |$}
\label{upperC2}

In order to prove Lemma \ref{BFsuperupperlem} we  use the ``sprinkle'' from $t_c + \eps/2$  to $t_c +\eps$.
We first run the \BF process up to time $t_c + \eps/2$ and call the resulting graph $G$.
Janson and Spencer  \cite{1005.4494} show that whp $G$ contains a giant component $C_1$ of size $\ge y \eps n$ for some constant $y$. We  let $G\setminus C_1=G^{small}$.
Next we run the \BF process further up to time $t_c + \eps$ and want to show that there exists a constant $K_u$ so that whp each component of size $\ge K_u \eps^{-2} \log n$ in $G^{small}$ at time $t_c + \eps$ is connected to $C_1$, so that $|C_2(t_c + \eps)|\le K_u \eps^{-2} \log n $. To this end we let $X$ be the number of components of size $\ge K_u \eps^{-2} \log n$ in $G^{small}$ at time $t_c + \eps$  that are not connected to $C_1$ and will show that $\Pr[ X \ge 1] = o(1)$ by proving $ \E X \le o(1)$.

Now we write
\begin{equation}
\label{condeq}
 \E X = \sum _{V_i} \Pr[ B_{V_i} ] \cdot \Pr [ V_i \nleftrightarrow C_1 | B_{V_i}],
\end{equation}
where the $V_i$'s are all possible sets of vertices in $G^{small}$  at time $t_c + \eps$ of size larger than $K_u \eps^{-2} \log n$; $B_{V_i}$ is the event that $V_i$  forms a connected, isolated component in $G^{small}$ at time $t_c + \eps$; and $V_i\nleftrightarrow C_1$ is the event that component $V_i$ is not connected to the largest component at $t_c +\eps$. Since there can be at most $n$ isolated components in $G^{small}$  at time $t_c + \eps$ and so $\sum _{V_i} \Pr[ B_{V_i} ] \le n $, it suffices to prove that for any $V\subset G^{small}$ with $|V| \ge K_u \eps^{-2} \log n$,
 \begin{equation}\label{IneqVC1}
   \Pr [ V \nleftrightarrow C_1 | B_{V}] \le n^{-2},
 \end{equation}
for this implies that $\E X \le n^{-1}$ as desired.

In order to prove (\ref{IneqVC1}), we consider the following special pairs of edges that may appear at a step in the \BF process between $t_c +\eps /2$ and $t_c +\eps$: call a pair $(e,f)$ of edges {\em red} if the first edge $e$ does not join two vertices which are isolated at time $t_c + \eps/2$ and the second edge $f$ joins a vertex in $C_1$ to a non-isolated vertex in $G^{small}$.  If such a pair is presented at a step in the \BF process between $t_c +\eps /2$ and $t_c +\eps$, the second edge $f$ will necessarily be added according to the \BF rule. Call such a second edge $f$ a {\em red edge}.  We note that the number of red edges added between $t_c + \eps/2 $ and $t_c + \eps$  is distributed as ${\rm{Bin}}  (\frac{\eps n}{4}, p)$, where $p =  (1 - x_1^2(t_c + \eps/2)) \cdot \frac{ |C_1| (n- |C_1| - n x_1(t_c+\eps/2))  }{ n^2} (1+o(1))$.
But for an upper bound we shall add each red edge independently with probability $\frac{c \eps}{n}$ with $c$ chosen small enough so that whp the actual red edges added in the \BF process are a superset of these independent edges.  Note that the choice of red edges does not affect the other edges in the \BF process since it does not affect which vertices are isolated.

Now we return to (\ref{IneqVC1}) and note that  for any $V\subset G^{small}$ with $|V| \ge K_u \eps^{-2} \log n$,
\begin{equation}\label{VC1f1fE}
\Pr [ V \nleftrightarrow C_1 | B_{V}]  \le \Pr [ f_1 \dots f_E \notin G | B_{V}]
\end{equation}
where $f_1 \dots f_E$ are the edges between $C_1$ and the vertices of $V$ which were not isolated at $t_c+\eps/2$.
In order to bound $\Pr [ f_1 \dots f_E \notin G | B_{V}]$ from above, we just consider the probability that each $f_i$ is added with probability $\frac{c \eps}{n}$ as an independent red edge (as discussed above). 
To bound the number of possible red edges, 
we color each vertex  that was isolated at $t_c + \eps/2$ {\em orange} and leave the vertices that were not isolated at time $t_c + \eps/2$ uncolored, i.e. non-orange. We claim the following.
\begin{prop}\label{Propnonisol}
There exists a constant $K_u$ so that whp there is no connected component in $G^{small}$ at $t_c +\eps$ with $\ge  \frac{9}{10} K_u \eps^{-2} \log n$ orange  vertices and $< \frac{1}{10} K_u \eps^{-2} \log n$ non-orange nodes.  In particular, whp every connected component in the $G^{small}$ subgraph at $t_c +\eps $ of size $\ge K_u \eps^{-2} \log n$ has at least $\frac{K_u}{10} \eps^{-2} \log n$ non-orange vertices.
\end{prop}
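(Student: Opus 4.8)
The plan is a first-moment computation over the possible ``bad'' components, carried out conditionally on the graph $G^{\ast}:=G(t_c+\eps/2)$ together with its whp properties (the Spencer--Wormald and Janson--Spencer concentration statements). First I would record the elementary estimate on the sprinkled edges: the window $(t_c+\eps/2,t_c+\eps]$ consists of $\eps n/4$ steps, and at each step the edge added is either the (uniform) second edge or the first edge, so for any fixed potential edge $g$ the probability that $g$ is ever presented — hence the probability that $g$ is added during the window — is at most $C\eps/n$. Since at distinct steps the presented pairs are conditionally uniform, for any $k$ distinct potential edges the probability that all $k$ are added during the window is at most $(C\eps/n)^{k}$, the contribution of two edges presented at the same step being of lower order. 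The point of the colouring is that orange vertices are isolated in $G^{\ast}$, so every edge of $G(t_c+\eps)$ incident to an orange vertex is a window edge; likewise any edge joining two distinct components of $G^{small}$ at time $t_c+\eps/2$ (call these the \emph{nodes}) is a window edge.

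Next is the structural reduction. If $D$ is a component of $G^{small}$ at time $t_c+\eps$ with $a$ orange vertices, then — since each node is already connected in $G^{\ast}$ — the non-orange part of $D$ is a union of whole nodes $N_{j_1},\dots,N_{j_b}$; contracting each node to a point, $D$ has a spanning tree $\hat T$ on the $a+b$ super-vertices, with $a+b-1$ edges, every one of which lifts to a window edge. So a bad $D$ — one with $a\ge a_{0}:=\tfrac9{10}K_u\eps^{-2}\log n$ orange vertices and fewer than $b_{0}:=\tfrac1{10}K_u\eps^{-2}\log n$ non-orange vertices — is witnessed by a choice of the $a$ orange vertices, a choice of $b\le b_{0}$ nodes, and a skeleton tree $\hat T$ realized by distinct window edges. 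I would bound $\E[\#\{\text{bad }D\}]$ by summing: $\binom{n}{a}$ for the orange vertices; for the nodes and the skeleton, the weighted Cayley identity $\sum_{\hat T}\prod_{w}|w|^{\deg_{\hat T}(w)}=\bigl(\sum_{w}|w|\bigr)^{a+b-2}\prod_{w}|w|$, with weight $1$ on each orange super-vertex and $|N_{j_\ell}|$ on each node, against the probability $(C\eps/n)^{a+b-1}$ for the lifted edges; and the sum over which $b$ nodes are used, which is the $b$-th elementary symmetric function of all node sizes and so is at most $n^{b}/b!$ because $\sum_{j}|N_j|\le n$. Using the hypothesis ``fewer than $b_0$ non-orange vertices'' to bound $\sum_w|w|<2a$, the $a$-dependent factors collapse (the $a^{a}$ from the Cayley base cancels the $a^{-a}$ from $\binom{n}{a}$) to $(c\eps)^{a}$, while the sum over $b$ collapses to $\sum_{b}(c'\eps a)^{b}/b!\le e^{c'\eps a}$. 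Altogether $\E[\#\{\text{bad }D\}]\le\sum_{a\ge a_0}\bigl(c\eps\,e^{c'\eps}\bigr)^{a}\cdot n^{1+o(1)}=n^{\,1-\Theta(K_u\eps^{-2}\log(1/\eps))}=o(1)$ once $\eps$ is below an absolute constant; the geometric tail in $a$ is harmless, and the all-orange case $b=0$ is the same computation with a tree in place of a forest. This gives the first assertion, and the ``in particular'' is then immediate: a component of size $\ge K_u\eps^{-2}\log n$ with fewer than $\tfrac1{10}K_u\eps^{-2}\log n$ non-orange vertices would have more than $\tfrac9{10}K_u\eps^{-2}\log n$ orange vertices and a fortiori fewer than $\tfrac1{10}K_u\eps^{-2}\log n$ non-orange nodes, contradicting what was proved.

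The hard part will be getting the enumeration organized so that the entropy of the non-orange part does not swamp the probabilistic gain. The naive move ``choose the $<b_{0}$ non-orange vertices freely'' costs $n^{b_{0}}=e^{\Theta(\eps^{-2}(\log n)^{2})}$, which overwhelms the gain $(c\eps)^{a_{0}}=e^{-\Theta(\eps^{-2}\log n\,\log(1/\eps))}$; the resolution is that each node, besides costing a free choice of one of $\sim n$ nodes, must also be glued into $D$ by an edge of probability $O(\eps/n)$, so the two factors of $n$ cancel and the surviving exponent is only linear in $\log n$. Equivalently, one can phrase the same bound as a subcritical-branching-process domination (offspring $\mathrm{Poisson}(C\eps)$, subcritical for small $\eps$) rooted at the non-orange part, which is fixed once $G^{\ast}$ is conditioned on, so that no union bound over it is needed; the large-deviation tail of the total progeny then gives the same $n^{-\Theta(\eps^{-2}\log(1/\eps))}$ factor.

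A final technical point to watch: if one reads ``non-orange nodes'' literally as the number of distinct components of $G^{small}$ at $t_c+\eps/2$ met by $D$ — which may be far fewer than the number of non-orange vertices — then the Cayley base $\sum_w|w|$ is no longer bounded by $2a$, and one additionally needs a bound on $\sum_j|N_j|^{2}$, i.e.\ that the susceptibility of $G^{small}$ at $t_c+\eps/2$ is $O(1/\eps)$ (the supercritical analogue of the subcritical blow-up $s(t_c-\eps)\sim\tfrac1{\eps(1-x_1^{2}(t_c))}$ established in Lemma~\ref{sublemma}), in order to control the weighted-Cayley sum over large nodes. With the hypothesis as stated, only $\sum_j|N_j|\le n$ is used, so I would run the argument in that form and deduce the weaker ``non-orange nodes'' phrasing from it, which is all that Lemma~\ref{BFsuperupperlem} requires.
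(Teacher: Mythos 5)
Your proof is correct but takes a genuinely different route from the paper's. The paper runs a breadth-first search from an arbitrary vertex and proves a conditional-probability bound: given the BFS history, each newly discovered child is orange with probability less than $2/3$, where the constant is obtained from the process-specific quantities $p_1, p_2$ and the explicit bound $x_1(t)\le e^{-t}$ together with $t_c>1$; a Chernoff bound for the dominating binomial then shows that a single BFS produces $\ge\frac{9}{10}A$ orange out of the first $\frac{9}{10}A+\frac{1}{10}A$ vertices with probability $o(n^{-1})$, and a union bound over the $n$ possible roots finishes. Your argument is instead a pure first-moment count over witness skeletons: contract each node to a super-vertex, observe that every edge of the spanning skeleton of a bad component must be a window edge (since it is incident to an orange vertex or joins two nodes), price each window edge at $O(\eps/n)$ via the chain rule on the sprinkled steps, and organize the enumeration of skeletons with the weighted Cayley formula $\sum_{\hat T}\prod_w|w|^{\deg_{\hat T}(w)}=(\sum_w|w|)^{a+b-2}\prod_w|w|$ so that the $n^a/a!$ from choosing orange vertices and the $n^b/b!$ from choosing nodes are exactly cancelled by the $(\eps/n)^{a+b-1}$ from the edges. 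The two approaches buy different things. The paper's is shorter and more elementary but the constant $2/3$ is specific to the Bohman--Frieze rule, and the conditional independence claim for the BFS is stated somewhat informally. Your argument is longer but structurally more robust: it uses nothing about the rule beyond the fact that each sprinkled step adds a near-uniform edge, so the same computation would transfer to any bounded-size rule or indeed to any process in which window edges have probability $O(\eps/n)$; it also avoids the BFS-conditioning subtleties entirely. Your closing remark about the ``nodes'' vs.\ ``vertices'' ambiguity in the statement, and the resulting need for a susceptibility bound on $G^{small}$ in the stricter reading, is a genuine observation and not an issue with your argument: the ``in particular'' clause and its use in Lemma~\ref{BFsuperupperlem} are both about non-orange \emph{vertices}, which is exactly what your calculation delivers.
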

By Proposition \ref{Propnonisol}, there are at least $ y\eps n \cdot \frac{K_u}{10} \eps^{-2} \log n=y n \cdot \frac{K_u}{10} \eps^{-1} \log n$ possible {\em red edges} that can join $V$ to $C_1$. So, we have
\begin{equation}\label{upperf1fE}
\Pr [ f_1 \dots f_E \notin G | B_{V}] \le \left( 1 - \frac{c\eps}{n} \right) ^{y n \frac{K_u}{10} \eps^{-1} \log n} \le n^{-2},
\end{equation}
for $K_u$ chosen large enough.  From (\ref{VC1f1fE}) and (\ref{upperf1fE}), we have (\ref{IneqVC1}) as desired.

\begin{proof}[Proof of Proposition \ref{Propnonisol}]
For notational convenience we let $A=K_u \eps^{-2} \log n$.
We pick an arbitrary vertex $v \in G^{small}$ and perform a breadth-first search (BFS) of its connected component  at $t_c + \eps$.  This method is used to analyze the size of components in the \ER process very precisely; here we do not have independence between the edges in the \BF process, but for this proposition we just need coarse bounds on conditional probabilities.

Let $M \ge \frac{9}{10} A$ be a constant and let $Y$ be a binomial random variable with success (which in the BFS means that a given vertex has an orange vertex as a child) probability $\frac{2}{3}$ and $M$ trials. As we will see below, it is useful to consider the probability $Q=\Pr [ Y> \frac{8}{9} M]$.
Using a Chernoff bound for a binomial (e.g. \cite{janson2000random}), there exists a constant $c>0$ so that $Q=\Pr [ Y> \frac{8}{9} M]\le \exp ( - c M )$. The latter can be made to be $ o(n^{-1})$ for suitably large $K_u$.

We claim that the probability that a particular child of a vertex in the BFS is orange is $< \frac{2}{3}$ conditioned on the previous history of the BFS. Assuming the claim,  the probability that the component starting from $v$ has $\ge \frac{9}{10} A$ orange vertices and $< \frac{1}{10} A$ non-orange vertices (as children in the BFS) is less than $Q\le o(n^{-1})$. Furthermore, there are $n$ possible vertices from which to begin the BFS, so by the union bound the probability that a component with $> \frac{9}{10} A$ orange vertices has $< \frac{1}{10} A$ non-orange vertices is $o(1)$. In other words, whp any connected component in $G^{small}$ of size $\ge A$ at $t_c + \eps$ has $\ge \frac{1}{10} A$ non-orange vertices, as desired.

To prove the claim, we note that without conditioning on the BFS, given that we add an edge connected to a fixed isolated vertex $v$ at time $t$, the probability that the second vertex is isolated is
\begin{equation}
 p_1= \frac{x_1(t) + (1-x_1^2(t))x_1(t) }{x_1(t) + (1-x_1^2(t))} + o(1),
\end{equation}
and given that we add an edge containing a fixed non-isolated vertex $v$ at time $t$, the probability that the second vertex is isolated is
\begin{equation}
 p_2=x_1(t) + o(1).
\end{equation}
 The relevant history of the BFS are the edges incident on $v$.  We are conditioning on edges present in the graph, and can break up the conditional probability into two cases: all the conditioned edges appear later in the process than the edge we consider, or at least one conditioned edge appears earlier.  In the first case, the probability that the edge is added to an isolated vertex is at most $p_1$, and in the second case the probability is $p_2$.

We can bound $x_1(t)$ by $e^{-t}$ since $x_1(0)=1$ and $x_1^\prime(t) = -x_1(t) -x_1^2(t) + x_1^3(t)$, and we can bound $t_c >1$ by the results of Bohman and Frieze\cite{bohman2001avoiding} and Spencer and Wormald\cite{spencer2007birth}.  With these bounds, $\max(p_1, p_2) <\frac 2 3$ and so the claim is proved.

\end{proof}

\subsection{Lower bound of $|C_2(t_c + \eps) |$}
\label{lowerC2}

It will turn out to be useful to have the following concentration lemma of a sum of indicator random variables:
\begin{lem}
\label{chebylem}
Let $Y = \sum_{i=1}^M Y_i$, where the $Y_i$'s are indicator random variables, and both $M$ and the $Y_i$'s can depend on an underlying parameter $n$.  Let $\mu = \E Y$ and assume $\mu \to \infty$ as $n \to \infty$.  Assume that for all $i \ne j$, $\E [ Y_i Y_j] \le \E Y_i \E Y_j ( 1+f(n))$, where $f(n) \to 0$.  Then $Y \sim \mu$ whp.  Similarly, if $\E [Y_i | Y_j = 1] \le \E Y_i (1+ f(n))$ for all $i \ne j$, then $Y \sim \mu$ whp.
\end{lem}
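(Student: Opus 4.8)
The plan is to prove this by the second moment method, applying Chebyshev's inequality to $Y$; this is the standard route for showing that a sum of possibly dependent indicators concentrates around its mean, and the two hypotheses are exactly calibrated to make the resulting error terms vanish.

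First I would note that the second form of the hypothesis reduces to the first. If $\E[Y_i \mid Y_j = 1] \le \E Y_i\,(1 + f(n))$ for all $i \neq j$, then, since $Y_j$ is an indicator, $\P[Y_j = 1] = \E Y_j$, and hence $\E[Y_i Y_j] = \E[Y_i \mid Y_j = 1]\cdot \E Y_j \le \E Y_i\,\E Y_j\,(1 + f(n))$, which is precisely the covariance hypothesis. So it suffices to treat the case $\E[Y_i Y_j] \le \E Y_i\,\E Y_j\,(1+f(n))$.

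Next I would bound $\var(Y)$. Expanding, $\var(Y) = \sum_{i=1}^{M} \var(Y_i) + \sum_{i \neq j} \cov(Y_i, Y_j)$. Since each $Y_i$ is an indicator, $\var(Y_i) = \E Y_i - (\E Y_i)^2 \le \E Y_i$, so the diagonal contributes at most $\sum_i \E Y_i = \mu$. For the off-diagonal terms the hypothesis gives $\cov(Y_i, Y_j) = \E[Y_i Y_j] - \E Y_i\,\E Y_j \le f(n)\,\E Y_i\,\E Y_j$ (note that we need no lower bound on the covariances --- negative correlation only helps), so $\sum_{i \neq j}\cov(Y_i, Y_j) \le f(n)\sum_{i,j}\E Y_i\,\E Y_j = f(n)\,\mu^2$. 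Altogether $\var(Y) \le \mu + f(n)\,\mu^2$.

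Finally, by Chebyshev's inequality, for any fixed $\del > 0$,
\[
\P\bigl[\,|Y - \mu| \ge \del\mu\,\bigr] \le \frac{\var(Y)}{\del^2 \mu^2} \le \frac{1}{\del^2 \mu} + \frac{f(n)}{\del^2},
\]
and since $\mu \to \infty$ and $f(n) \to 0$ as $n \to \infty$, the right-hand side tends to $0$. As $\del$ was arbitrary, $(1-\del)\mu \le Y \le (1+\del)\mu$ whp for every $\del>0$, i.e.\ $Y \sim \mu$ whp. I do not expect a genuine obstacle in this lemma; it is a routine computation, and the only points requiring a moment's care are the reduction of the conditional-expectation hypothesis to the covariance hypothesis and the observation that both error terms $1/(\del^2\mu)$ and $f(n)/\del^2$ vanish precisely under the stated assumptions.
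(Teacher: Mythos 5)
Your proof is correct and follows essentially the same second-moment/Chebyshev route as the paper: both expand $\var(Y)$ into a diagonal part bounded by $\mu$ and an off-diagonal part bounded by $f(n)\mu^2$ via the hypothesis, then conclude via Chebyshev and $\mu\to\infty$, $f(n)\to 0$. The only small difference is cosmetic: you make the reduction of the conditional-expectation hypothesis to the product hypothesis explicit, and you bound $\var(Y)$ first and then divide, while the paper manipulates the ratio $\var(Y)/(\E Y)^2$ directly.
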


\begin{proof}
Using Chebyshev's inequality it is enough to prove that $\frac{ \var(Y)}{( \E Y)^2 } = o(1)$.
\begin{eqnarray*}
\frac{ \var(Y)}{( \E Y)^2 }
&=& \frac{\E Y + \sum_{i \ne j} \E Y_i Y_j - (\E Y)^2  }{  (\E Y)^2  }\\
&=& \frac{ \sum_{i \ne j} \E Y_i Y_j }{(\E Y)^2  } - 1 + o(1) \text{ since  } \E Y \to \infty\\
&\le& \frac{\sum_{i \ne j} \E Y_i Y_j  } {\sum_{i \ne j} \E Y_i \E Y_j} - 1+o(1) \\
&\le& \frac{ (1+f(n)) \sum_{i \ne j} \E Y_i \E Y_j}{\sum_{i \ne j} \E Y_i \E Y_j} - 1 + o(1)\\
&=& f(n) + o(1) = o(1)
\end{eqnarray*}
\end{proof}

In order to prove Lemma \ref{superlowerlem}. We use the ``sprinkle'' from $t_c - \eps$  to $t_c +\eps$.

We first run the \BF process up to time $t_c - \eps$ and divide the vertices into three parts.
\begin{itemize}
\item[(1)] \textit{Small} : vertices in components of size $< m_1 \eps^{-2}$
\item[(2)] \textit{Medium}: vertices in components of size between $m_1 \eps^{-2}$ and $m_2 \eps^{-2}$
\item[(3)] \textit{Large}: vertices in components of size $> m_2 \eps^{-2}$.
\end{itemize}
Call the respective subgraphs $G_S$, $G_M$ and $G_L$ on these vertices.
The constants $m_1 < m_2$ will be determined later in the proof.

\begin{prop}
\label{sizesprop}
 Whp at time $t_c -\eps$ the above sets of vertices satisfy:
\begin{itemize}
\item[(1)] $|G_M| \sim c_M \eps n$
\item[(2)] $|G_L| \le c_L \eps n$
\end{itemize}
where $c_M$ and $c_L$ are constants depending on $m_1$ and $m_2$.
\end{prop}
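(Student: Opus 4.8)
The plan is to write each of $|G_M|$ and $|G_L|$ as a sum of the vertex-counts $X_i(t_c-\eps)$ over the appropriate range of component sizes $i$, and then to evaluate or bound these sums with tools already available. For the medium range I will use the concentration statement \eqref{concentration} together with the asymptotic formula of Theorem~\ref{smallpointsthm}; for the large range I will use the concentration of the susceptibility from \cite{spencer2007birth} together with the estimate $s(t_c-\eps)\sim 1/\big(\eps(1-x_1^2(t_c))\big)$ derived in the proof of Lemma~\ref{sublemma}. The key observation making \eqref{concentration} applicable is that, with $\eps$ fixed relative to $n$, a component of size $\Theta(\eps^{-2})$ has \emph{fixed} size, so each such $X_i(t_c-\eps)$ is concentrated around $n\,x_i(t_c-\eps)$.

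\emph{Part (1).} Write $|G_M|=\sum_{i=m_1\eps^{-2}}^{m_2\eps^{-2}}X_i(t_c-\eps)$. There are only $(m_2-m_1)\eps^{-2}+O(1)$ indices in this range, a constant number with respect to $n$, so a union bound over the corresponding failure events (each of probability $O(\exp(-n^{1/5}))$ by \cite{spencer2007birth}) gives that whp $X_i(t_c-\eps)=n\,x_i(t_c-\eps)+o(n)$ simultaneously for all of them; hence whp $|G_M|=n\sum_i x_i(t_c-\eps)+o(n)$. Now insert $x_i(t_c-\eps)=C(\eps)\,i^{-3/2}e^{-D(\eps)\eps^2 i}(1+O(1/i))$ from Theorem~\ref{smallpointsthm} and put $i=u\eps^{-2}$: since $i\ge m_1\eps^{-2}$ the relative error $O(1/i)$ is $O(\eps^2)$, and the sum becomes a Riemann sum of mesh $\eps^2$ for $\int_{m_1}^{m_2}u^{-3/2}e^{-D(\eps)u}\,du$, giving
\[
\sum_{i=m_1\eps^{-2}}^{m_2\eps^{-2}}x_i(t_c-\eps)=C(\eps)\,\eps\,(1+o(1))\int_{m_1}^{m_2}u^{-3/2}e^{-D(\eps)u}\,du,
\]
which is asymptotic, as $\eps\to0$, to $c_M\,\eps$ with $c_M=c_M(m_1,m_2)=c\int_{m_1}^{m_2}u^{-3/2}e^{-du}\,du$ ($c,d$ being the absolute constants of Theorem~\ref{smallpointsthm}). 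This proves that whp $|G_M|\sim c_M\eps n$.

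\emph{Part (2).} Since $n\,S(t_c-\eps)=\sum_i i\,X_i(t_c-\eps)$ and every component contributing to $G_L$ has size exceeding $m_2\eps^{-2}$,
\[
|G_L|=\sum_{i>m_2\eps^{-2}}X_i(t_c-\eps)\le\frac{\eps^2}{m_2}\sum_{i>m_2\eps^{-2}}i\,X_i(t_c-\eps)\le\frac{\eps^2}{m_2}\,n\,S(t_c-\eps).
\]
By \cite{spencer2007birth} whp $S(t_c-\eps)=s(t_c-\eps)+o(1)$, and $s(t_c-\eps)\sim 1/\big(\eps(1-x_1^2(t_c))\big)$ as computed in the proof of Lemma~\ref{sublemma}; so for $\eps$ small enough whp $n\,S(t_c-\eps)\le 2n/\big(\eps(1-x_1^2(t_c))\big)$, whence $|G_L|\le c_L\eps n$ with $c_L=c_L(m_2)=2/\big(m_2(1-x_1^2(t_c))\big)$.

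\emph{Main obstacle.} The argument is essentially a Riemann-sum estimate plus a first-moment-type bound, so I do not expect a serious difficulty; the only delicate point is the bookkeeping of error terms and the order of limits in Part (1). The $o(1)$ errors from \eqref{concentration} are harmless because only finitely many (in $n$) indices occur, and the $O(1/i)$ error in Theorem~\ref{smallpointsthm} is uniformly $O(\eps^2)$ over the range considered, so it perturbs $c_M$ only at lower order as $\eps\to0$; with the convention that $n\to\infty$ precedes $\eps\to0$, all of this is under control. One should also note that Theorem~\ref{smallpointsthm} is a statement about the deterministic sequence $\{x_i(t)\}$ and is being applied at $i=\Theta(\eps^{-2})$, not at $i=i(n)$, which is precisely the regime in which it was intended to be used.
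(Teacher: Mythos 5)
Your proof is correct and follows essentially the same route as the paper: Part (1) applies the concentration of $X_i$ together with Theorem~\ref{smallpointsthm} and then sums (the paper does not spell out the Riemann-sum step, but that is exactly what is meant), and Part (2) bounds $|G_L|$ via the concentration of the susceptibility at $t_c-\eps$. Your version is a more detailed writeup of the same argument.
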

\begin{proof}
Using Theorem \ref{smallpointsthm} we compute $|G_M|\sim \sum_{i=m_1 \eps^{-2}}^{m_2 \eps^{-2}} (x_i(t_c-\eps)+o(1)) \, n \sim \sum_{i=m_1 \eps^{-2}}^{m_2 \eps^{-2}} ((c+ O(\eps)) i^{-3/2} e^{ - (d + O(\eps)) \eps^2  i}+o(1)) \, n \sim c_M \eps n$ for a constant $c_M$ depending on $m_1$ and $m_2$. For $|G_L|$,
we know from the concentration of susceptibility that for some constant $c$, at $t_c -\eps$, $\sum_{v} |K(v)|  \le \frac{cn}{\eps} $ whp, so the number of vertices in components larger than $ m_2 \eps^{-2}$ is less than $\frac{ \eps^2}{m_2} \frac{cn }{\eps} = \frac{c}{m_2} \eps n$.
\end{proof}

Next we will run the \BF process from time $t_c -\eps$ to $t_c +\eps$ and show that whp for an appropriately chosen constant $K_l$, two separate connected components of size $\ge K_l \eps^{-2} \log n$ form in the $G_M$ subgraph without joining any vertices in $G_L$.  If the giant component at $t_c + \eps$ contains vertices from $G_L$, then either of these two components serve as the lower bound for the second-largest component.  If not, the smaller of the two serves as the lower bound, since the two components are not joined to each other.

We run the \BF process as follows. Call a pair of potential edges $(e_i, f_i)$ green if $e_i$ does not connect two isolated vertices at time $t_c - \eps$ and $f_i$ connects two medium components.  Call all other pairs of edges brown.  A randomly selected pair of edges will have probability $q= (1-x_1^2(t_c-\eps)) \cdot c_m^2 \eps^2 +o(1)$ of being green.  Assign a uniform $[0,1]$ random variable independently to each pair of edges (ordered, with repetition, brown and green pairs).   At each step of the process we flip an independent coin.  With probability $q$ we pick a random green pair of edges and add its second edge to the graph.  With probability $(1- q)$, pick the brown pair of edges with the lowest random number assigned, add the appropriate edge according to the \BF rule, and discard the pair.  Since the edges added from the green pairs do not touch isolated vertices, their choice will not affect the choice of edge from the brown pairs.  Similarly, the choice of brown edges will not affect the choice of green edges.

Now we describe a random graph, the lower bound graph, coupled to the \BF process.  We add the second edge of the same green pairs as above (a random choice of ${\rm{Bin}}(m, q)$ medium-medium edges), calling these edges green edges, then add \textit{both} edges from all pairs (brown and green) whose random variable is $\le \frac{ M}{\binom n 2 ^2 }$, calling these edges brown edges.

The following facts hold about this coupling:
\begin{enumerate}
\item  The green edges in the lower bound graph are the same edges as the \BF process adds from green pairs; the edges from brown pairs in the \BF process are a subset of the brown edges in the lower bound graph.
\item  Every edge in the graph is added independently with probability $p_b \sim \frac{c_b \eps}{n}$ as a brown edge in the lower bound graph.  These edges are  independent of the green edges added.
\item  If a component of size $\ge K_l \eps^{-2} \log n$ forms from green edges in $G_M$ which is not connected to any large vertex in the lower bound graph, then there is also such a component in the \BF process.
\end{enumerate}

Now we analyze the lower bound random graph.  Using a standard transformation we assume that each potential green edge is added independently with probability $p_g = \frac{c_g \eps}{n}$.

\begin{prop}
\label{goodprop}
 There exist constants $K_l$ and $K_m$ so that whp more than $n^{0.8}$ components of size between $K_l \eps^{-2} \log n$ and $K_m \eps^{-2} \log n$ form in $G_M$.
\end{prop}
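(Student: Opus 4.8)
The plan is to analyze the lower bound random graph, in which each potential green edge (a medium-medium edge) appears independently with probability $p_g = \frac{c_g \eps}{n}$, together with the independent brown edges at density $p_b \sim \frac{c_b \eps}{n}$. First I would set up the node structure exactly as in the proof of Theorem \ref{c1sublowerthm}: contract each medium component at time $t_c - \eps$ to a single node. By Proposition \ref{sizesprop} we have $|G_M| \sim c_M \eps n$, and each medium component has between $m_1 \eps^{-2}$ and $m_2 \eps^{-2}$ vertices, so the number of nodes $N$ satisfies $c_M/m_2 \cdot \eps^{3} n \le N \le c_M/m_1 \cdot \eps^{3} n$, i.e. $N = \Theta(\eps^3 n)$. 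Each pair of nodes has between $m_1^2 \eps^{-4}$ and $m_2^2 \eps^{-4}$ potential edges between them, so in the contracted graph two nodes are joined by a green edge with probability $\Theta(\eps^{-4} \cdot \frac{\eps}{n}) = \Theta(\frac{\eps^{-3}}{n}) = \Theta(\frac{1}{N})$; hence the node-graph is an \ER random graph $G(N, \frac{d}{N})$ for a constant $d = d(m_1,m_2,c_g)$, which we can make $>1$ (supercritical) by choosing $c_g$, i.e. ultimately $m_1, m_2$, appropriately.

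Next I would invoke the classical \ER theory in the supercritical regime: in $G(N, d/N)$ with $d>1$ fixed, whp the number of tree components with exactly $k$ nodes, for $k$ in a suitable range, is concentrated around its expectation $\sim N \cdot \frac{k^{k-1}}{k!}(de^{-d})^k d^{-1}$, which for $k = \Theta(\log N)$ is polynomially large in $N$ (one chooses the window $[k_1, k_2]$ of sizes so that the per-size count is between, say, $N^{0.85}$ and $N^{0.95}$). This uses the standard fact that components of logarithmic size in the supercritical \ER graph behave like subcritical ones after removing the giant, or more directly a first/second moment computation; Lemma \ref{chebylem} is the tool to promote the expectation count to a whp statement, since the events ``node $v$ lies in a tree component of size in $[k_1,k_2]$'' for distinct $v$ are only weakly positively correlated (the conditional expectation bound $\E[Y_i \mid Y_j = 1] \le \E Y_i (1 + o(1))$ holds because conditioning on one small tree component merely deletes $O(\log N)$ nodes and $O(\log N)$ potential edges). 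Translating back: each such node-tree of $k \in [k_1,k_2]$ nodes corresponds to a connected component in $G_M$ with between $m_1 \eps^{-2} k_1$ and $m_2 \eps^{-2} k_2$ vertices, which (reparametrizing the constants) is between $K_l \eps^{-2}\log n$ and $K_m \eps^{-2}\log n$ vertices. Since $N = \Theta(\eps^3 n)$ and $\eps$ is a constant with respect to $n$, having more than $N^{0.9}$ such components is more than $n^{0.8}$ of them whp.

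Finally I would address the requirement that these components ``form in $G_M$ without joining large vertices,'' which is implicit in the later use of the proposition: the brown edges in the lower bound graph (density $\sim c_b\eps/n$, independent of the green edges) could connect one of these small green-components to $G_L$. But $|G_L| \le c_L \eps n$ by Proposition \ref{sizesprop}, so a fixed component of $O(\eps^{-2}\log n)$ vertices acquires a brown edge to $G_L$ with probability $O(\eps^{-2}\log n \cdot \eps n \cdot \frac{\eps}{n}) = O(\eps^{-1}\log n \cdot \frac{1}{1})$ — this needs care, so more precisely the probability a given green-component of size $s$ is untouched by brown edges to $G_L$ is $(1 - p_b)^{s |G_L|} \ge (1 - \frac{c_b\eps}{n})^{O(\eps^{-2}\log n)\cdot O(\eps n)} = (1-O(\eps^{-2}\log n / n))^{O(\eps n)}$, and one checks this is $1 - o(1)$, so whp a constant fraction — and in particular still more than $n^{0.8}$ — of the green-components survive. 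The main obstacle I anticipate is the bookkeeping in the second-moment / Lemma \ref{chebylem} step: one must verify that restricting to tree components of a logarithmic size window genuinely gives the weak-correlation bound and that the per-size expected count is polynomially large (not just $\omega(1)$), which requires the standard but slightly delicate asymptotics of $\frac{k^{k-1}}{k!}(de^{-d})^k$ for $k \asymp \log N$ in the supercritical window, together with keeping the constants $m_1, m_2, c_g, K_l, K_m$ consistent across Theorems \ref{subcriticalthm}--\ref{supercriticalthm}.
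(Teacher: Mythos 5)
Your proposal takes essentially the same route as the paper: contract the medium components to nodes, dominate the green-edge graph on nodes from below and above by Erd\H{o}s--R\'{e}nyi graphs with edge probability $\Theta(1/N)$ where $N = \Theta(\eps^3 n)$, compute the expected number of tree components of $\Theta(\log n)$ nodes, and promote this to a whp statement via the second-moment argument in Lemma~\ref{chebylem}. The paper writes the expectation out directly from Cayley's formula and a single $k = \frac{K_l}{m_1}\log n$, whereas you invoke the known formula $\sim \frac{N}{d}\frac{k^{k-2}}{k!}(de^{-d})^k$ over a window $[k_1,k_2]$; these are the same computation. Two small remarks: the invocation of supercriticality ($d>1$) is unnecessary for this proposition --- the first-moment bound gives $n^{0.8}$ trees for a suitably small $K_l$ regardless of whether the node-graph is sub- or supercritical, and $c_g$ is determined by the process rather than a free parameter (only $m_1, m_2, K_l$ are tunable). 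Also, your final paragraph about brown edges connecting good components to $G_L$ is not part of this proposition; it is handled in the subsequent propositions of the paper (via Proposition~\ref{smallprop} and the ensuing union-bound/second-moment argument), so you can drop it here.
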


\begin{proof}
Call the connected components in $G_M$ at time $t_c - \eps$ `nodes'.  We show that whp more than $n^{0.8}$ trees of $k= \frac{K_l}{m_1} \log n$ nodes form in the lower bound graph with the addition of the random green edges.  Let $X$ be the number of such trees.  From Proposition \ref{sizesprop} we know that the number of nodes is between
\begin{align*}
N_l &= \frac{c_M \eps^3 n}{m_2}, \\
N_u &= \frac{c_M \eps^3 n}{m_1}.
\end{align*}
 The probability that two nodes join together with the addition of the random green edges depends on the size of the nodes and is between:
\begin{align*}
p_l  &= 1 - \left( 1- p_g  \right )^ {m_1^2 \eps^{-4}  } \sim \frac{ c_g m_1^2 \eps^{-3}}{n}, \\
p_u  &= 1 - \left( 1-  p_g \right )^ { m_2^2 \eps^{-4} } \sim \frac{ c_g m_2^2 \eps^{-3}}{n}.
\end{align*}
So using the exact formula for the number of trees on $k= \frac{K_l}{m_1} \log n$ nodes, we calculate
\begin{align*}
\E X &\ge  \binom {N_l}{k} k^{k -2} p_l^{k-1} \left( 1- p_u   \right ) ^{ (N_u -k) k +\binom k 2 - (k-1) } \\
&\sim \frac{ N_l^k k^{k-2} (c_g m_1^2 \eps^{-3})^{k-1}}{k! n^{k-1}  } \exp \left [  -  \frac{ c_g m_2^2 \eps^{-3}}{n} N_u k   \right  ]   \\
&\sim  n \cdot  \frac{ \left (c_g m_1^2 m_2  c_M   \right ) ^k \eps^3 } {c_g m_2^2\sqrt{2 \pi} k^{3/2}    } \exp \left [  -  \frac{ c_g m_2^2 }{m_1}  k   \right  ]    \\
&\sim  n \cdot   \frac{  \eps^3 } {c_g m_2^2\sqrt{2 \pi} k^{3/2}    }  \left( c_g m_1^2 m_2^{-1}  c_M \cdot \exp \left ( -  \frac{ c_g m_2^2 }{m_1}  \right  )   \right ) ^{\frac{K_l}{m_1} \log n },
\end{align*}
which, for $K_l$ small enough, is $\ge n^{0.8}$.  Since each node has size between $m_1 \eps^{-2}$ and $m_2 \eps^{-2}$, setting $K_m = \frac{m_2}{m_1} K_l$ finishes the proof.

We now use Chebyshev's inequality via Lemma \ref{chebylem} to show concentration.  Let $T_i$ be the indicator random variable that a  set $i$ of $k$ nodes is in fact an isolated tree.  Then the number of isolated trees of size $k$ is $\sum_ i T_i$.    If trees $i$ and $j$ overlap, then $cov(T_i, T_j)$ is negative since the two sets of vertices cannot simultaneously be isolated trees. If the two trees do not overlap then
\[ \E (T_i T_j)  \le \E T_i \E T_j ( 1- p_u)^{- k^2}.  \]
Since $  ( 1- p_u)^{- k^2} - 1 = O( n^{-1} \log^2 n)$, we use Lemma \ref{chebylem} to conclude that $\sum_i T_i \sim \E \sum _i T_i $ whp.

\end{proof}

Call the above components `good' components.  Now we show that whp at least two good components avoid $G_L$ and the rest of $G_M$ with the addition of the brown edges.
Fix a good component, call it $Y$.  First we bound the number of small vertices that are connected to $G_M$ and $G_L$, excluding $Y$, with the addition of the random brown edges.

\begin{prop}
\label{smallprop}
For some constant $c_{bs}$, whp the number of small vertices connected to $G_M$ or $G_L$ at time $t_c + \eps$ is $\le c_{bs} \eps n$.
\end{prop}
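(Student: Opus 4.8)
The plan is to bound, via a first-moment / BFS-exploration argument, the number of small vertices that get pulled into the medium-or-large part of the graph through the brown edges sprinkled from $t_c-\eps$ to $t_c+\eps$. Recall that in the lower-bound coupling each potential edge appears independently as a brown edge with probability $p_b \sim \frac{c_b \eps}{n}$, and these are independent of the green edges. A small vertex $v$ becomes connected to $G_M\cup G_L$ only if there is a path of brown edges (possibly of length one) from $v$'s small component to some medium or large vertex. So I would set $W$ equal to the number of small vertices connected to $G_M\cup G_L$ at time $t_c+\eps$ and show $\E W = O(\eps n)$, then invoke a concentration statement.

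The key steps, in order: (1) From Proposition \ref{sizesprop} and the concentration of susceptibility, the total number of small vertices is $\le n$, each small component has size $\le m_1\eps^{-2}$, and the susceptibility restricted to $G_S$ is $\Theta(\eps^{-1} n)$; I will use the latter to control how fast the component masses grow under exploration. (2) Perform a BFS in the brown-edge graph starting from $G_M\cup G_L$ (viewed as a single "boundary"), exploring only into $G_S$. Since each brown edge is present independently with probability $\sim c_b\eps/n$ and the total vertex mass is $n$, the expected number of small components reached in the first round is $O(|G_M\cup G_L|\cdot \tfrac{\eps}{n}\cdot n)\cdot$ (typical component size) — more carefully, the expected number of small \emph{vertices} reached in one round from a set $S$ of already-reached vertices is at most $\sum_{v\in S}\sum_{w\in G_S}\frac{c_b\eps}{n}\cdot|K(w)| = \frac{c_b\eps}{n}|S|\cdot S_1(G_S)\cdot n = O(\eps |S|)$, using $S_1(G_S) = O(\eps^{-1})$. (3) Thus the exploration is dominated by a branching process with offspring mean $O(\eps)<1$ (for $\eps$ small), started from $|G_M\cup G_L| = O(\eps n)$ boundary vertices, so in expectation it reaches $O(\eps n)\cdot\frac{1}{1-O(\eps)} = O(\eps n)$ small vertices total; set $c_{bs}$ to absorb the constants. (4) For the "whp" upgrade, either note that $W$ is a sum over the $O(\eps n)$ starting components of the (independent, conditionally on the fixed $t_c-\eps$ graph) cluster sizes in a subcritical percolation, each with exponential tails, so a union bound or a Chernoff/Azuma-type concentration for the exploration martingale gives $W \le c_{bs}\eps n$ whp; alternatively Lemma \ref{chebylem} applies once the pairwise covariance between "small vertex $u$ reached" and "small vertex $v$ reached" is shown to be $\le \E(\cdot)\E(\cdot)(1+o(1))$, which holds because correlations propagate only through shared brown-edge paths and are $O(n^{-1}\log n)$ in the relevant range.

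The main obstacle I expect is step (2)–(3): making rigorous the claim that the brown-edge exploration out of $G_M\cup G_L$ into $G_S$ behaves like a subcritical branching process with mean $O(\eps)$. The subtlety is that we must bound the growth in terms of \emph{vertex mass} (not number of components), and the relevant bound — that one round of exploration multiplies the reached mass by $\frac{c_b\eps}{n}\cdot S_1(G_S)\cdot n = O(\eps)$ — relies crucially on the susceptibility bound $S_1(G_S(t_c-\eps)) = O(\eps^{-1})$ from the concentration result of Spencer–Wormald, and on the fact that we are only exploring brown edges, whose independence is guaranteed by the coupling. A secondary technical point is that one must take $\eps$ small enough that $c_b\eps < 1$ so the geometric series converges; this is fine since all our asymptotics are stated first $n\to\infty$, then $\eps\to 0$. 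Once these are in place the first-moment bound $\E W = O(\eps n)$ and the concentration are routine.
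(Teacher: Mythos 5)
Your plan has the right shape (BFS into $G_S$ from $G_M\cup G_L$, dominate by a branching process whose offspring distribution is controlled by the restricted susceptibility, then Chernoff), and is essentially the paper's approach — but your offspring-mean calculation contains an arithmetic error that kills the argument as stated. You write
\[
\sum_{v\in S}\sum_{w\in G_S}\frac{c_b\eps}{n}\,|K(w)| \;=\; c_b\eps\,|S|\cdot S_1(G_S) \;=\; O(\eps|S|)
\quad\text{using } S_1(G_S)=O(\eps^{-1}),
\]
but $c_b\eps\cdot O(\eps^{-1})=O(1)$, not $O(\eps)$: the $\eps$ from the brown-edge density cancels exactly against the $\eps^{-1}$ from the susceptibility. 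The branching process therefore has offspring mean $\Theta(1)$, and there is no way to push it below $1$ by taking $\eps$ small, contrary to what you claim in your last paragraph. This is not a cosmetic issue: with the bound $S_1(G_S)=O(\eps^{-1})$ alone, the unknown constant in the $O$ could be $\gg 1/c_b$, making the process supercritical.

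The paper's proof avoids this by extracting the dependence of the restricted susceptibility on the cutoff $m_1$, not just on $\eps$. Using Theorem \ref{smallpointsthm}, $x_i(t_c-\eps)\sim C\,i^{-3/2}e^{-D\eps^2 i}$, one has
\[
\sum_{i=1}^{m_1\eps^{-2}} i\,x_i(t_c-\eps)\;\approx\;C\sum_{i\le m_1\eps^{-2}} i^{-1/2}\;=\;\Theta\!\left(\frac{\sqrt{m_1}}{\eps}\right),
\]
so the (size-biased) mean small-component size is $\le c\sqrt{m_1}/\eps$ and the offspring mean of the dominating process is $\le c\,c_b\sqrt{m_1}$. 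The free parameter that makes this subcritical is $m_1$, which is chosen small enough (independently of $\eps$) that $c\,c_b\sqrt{m_1}<0.9$; the $\eps$'s still cancel. So the role of Theorem \ref{smallpointsthm} is not merely to give $S_1(G_S)=O(\eps^{-1})$ (the susceptibility concentration already gives that), but to give the $\sqrt{m_1}$ factor. Once you replace your step (2) with this sharper estimate, the rest of your outline (the $\sum Z_i\ge t\eps n$ Chernoff bound for the whp statement) goes through as in the paper, and Lemma \ref{chebylem} is not needed for this proposition.
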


\begin{proof}

Let $F = G_M \cup G_L \setminus Y$.  $|F| \le (c_L + c_M) \eps n =: c_F \eps n$.  Starting with the vertices in $F$ we  run a breadth-first search to explore the small vertices connected to $F$.  Using Theorem \ref{smallpointsthm}, and approximating the sum $\sum_{i=1}^{m_1 \eps^{-2}} i x_i(t_c - \eps)$ by an integral, we see that the average component size of small vertices is $\le \frac{ c \sqrt{m_1}}{\eps}$ whp. Each brown edge is added independently with probability $\frac{c_b \eps}{n}$, so we can bound above the size of the F-connected set of small vertices with a branching process in which each vertex has ${\rm{Pois}}(c_b \eps)$ descendant nodes, each of which has a number of vertices distributed as the size of a randomly chosen small node; in particular, the mean number of descendants in one node is $\le \frac{ c \sqrt{m_1}}{\eps}$. If $Z_i$ is a random variable representing the total number of descendants of one vertex, then $\E Z_i = \sqrt{m_1}c c_b <0.9 $ for a small enough choice of $m_1$. For the branching process to reach size $c_F \eps n +t \eps n$, we must have
\[\sum_{i=1}^{c_F \eps n +t \eps n} Z_i \ge t \eps n \]
Since $\E Z_i < 0.9$, for $t > 10 c_F$, a Chernoff bounds shows that the probability of this occurring is exponentially small.  Letting $c_{bs} = 12 c_F$ completes the proof.
\end{proof}

\begin{prop}
 The probability that component $Y$ does not join any vertex in  $G_L$ with the addition of the brown edges is $\ge n^{-0.1}$.
\end{prop}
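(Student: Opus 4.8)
The plan is to reduce the event that $Y$ meets $G_L$ to the event that a single freshly-added brown edge falls between $Y$ and a small, \emph{fixed} set of vertices, and then to estimate that probability directly.

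First I would condition on the graph at time $t_c-\eps$ and on all the green edges, restricting to the (whp) event that $Y$ is one of the good components of Proposition~\ref{goodprop}; in particular $Y$ is then a tree of $k=\frac{K_l}{m_1}\log n$ medium nodes, so $|Y|\le K_m\eps^{-2}\log n$ with $K_m=\frac{m_2}{m_1}K_l$. Next I would delete from the lower bound graph the vertices of $Y$ together with all edges incident to them, call the result $H$, and set $B$ to be the union of those connected components of $H$ that contain a vertex of $G_L$. The key claim is that, in the lower bound graph, $Y$ is joined to $G_L$ \emph{if and only if} some brown edge joins a vertex of $Y$ to a vertex of $B$: one direction is trivial, and for the other one takes a shortest path $y=v_0,v_1,\dots,v_m=\ell$ from $Y$ to $G_L$, notes that by minimality $v_1,\dots,v_{m-1}\notin Y\cup G_L$, observes that the first edge $\{v_0,v_1\}$ must be brown (since $Y$ is a whole connected component of the green-edge graph on $G_M$ and every green edge joins two medium vertices, no green edge can leave $Y$), and finally that $v_1,\dots,v_m$ all lie in $H$ and in one component of $H$ meeting $G_L$, whence $v_1\in B$.

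Then I would bound $|B|$: a component of $H$ meeting $G_L$ can contain at most $|G_L|\le c_L\eps n$ large vertices, at most $|G_M|\le c_M\eps n$ medium vertices, and, by Proposition~\ref{smallprop}, at most $c_{bs}\eps n$ small vertices in total, so by Proposition~\ref{sizesprop} and Proposition~\ref{smallprop} whp $|B|\le c'\eps n$ with $c'=c_L+c_M+c_{bs}$. The probability estimate then comes from a two-stage exposure of the brown edges: first reveal all brown edges \emph{not} incident to $Y$ --- which, with the base graph and green edges already fixed, determines $H$ and hence $B$ --- and only then reveal the brown edges incident to $Y$. On the event $\{|B|\le c'\eps n\}$ there are at most $|Y|\,|B|\le K_m c'\eps^{-1}n\log n$ vertex pairs joining $Y$ to $B$, each of which becomes a brown edge independently with probability $p_b\sim c_b\eps/n$, so the conditional probability that none of them does is at least $(1-p_b)^{K_m c'\eps^{-1}n\log n}\ge n^{-(1+o(1))c_b c' K_m}$. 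Choosing $K_l$ (hence $K_m=\frac{m_2}{m_1}K_l$) small enough that $c_b c' K_m\le \frac{1}{20}$, which is compatible with the smallness of $K_l$ required in Proposition~\ref{goodprop} because $c_b,c',m_1,m_2$ are all fixed, and using $\Pr[|B|\le c'\eps n]=1-o(1)$, gives $\Pr[\,Y\nleftrightarrow G_L\,]\ge n^{-0.1}$ for all large $n$.

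The main obstacle is making the two-stage exposure rigorous: it is the step that lets us pretend $B$ is a fixed set of size $O(\eps n)$ before looking at the brown edges that might attach $Y$ to it, and it relies essentially on $Y$ being a full connected component of the green-edge graph (so that no green edge can carry $Y$ out to $G_L$) and on distinct brown edges being independent, together with the a priori bound $|Y|\le K_m\eps^{-2}\log n$. Everything else is constant-chasing, kept in hand by treating $K_l$ as a free small parameter.
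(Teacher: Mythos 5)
Your proposal is correct and follows essentially the same route as the paper: both bound the probability that no brown edge joins $Y$ to a set of $O(\eps n)$ ``dangerous'' vertices, using the independence of the brown edges in the lower bound graph and the bound $|Y|\le K_m\eps^{-2}\log n$, then take $K_l$ small. Where the paper simply declares the dangerous set (large plus medium plus connected small vertices) and multiplies factors $(1-c_b\eps/n)$, you add two genuinely useful clarifications: the shortest-path argument showing that $Y\leftrightarrow G_L$ if and only if a brown edge from $Y$ lands in $B$ (which, in particular, uses that $Y$ is a full component of the green-edge graph so no green edge can leave it), and the two-stage exposure that makes $B$ measurable with respect to the brown edges not touching $Y$ before the remaining brown edges are revealed. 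These are precisely the points the paper leaves implicit, so treat your write-up as a more careful version of the same proof rather than a different one.
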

\begin{proof}
$Y$ has size  between $K_l \eps^{-2} \log n$ and $K_l \frac{m_2}{m_1} \eps^{-2} \log n$.  The vertices that are potentially dangerous - vertices that could join $Y$ to $G_L$ - are the large vertices, the other medium vertices and the small vertices which are connected to either of the two sets.  Whp the total number of these dangerous vertices is $\le (c_{bs} + c_ L  + c_M) \eps n$.

The probability that $Y$ avoids all the  dangerous vertices with the addition of the random brown edges is therefor
\begin{align*}
&\ge   \left( 1-  \frac{c_b \eps}{n}   \right ) ^ { (c_{bs} + c_ L  + c_M) \eps n \cdot |Y|    }   \\
&\ge   \left( 1-  \frac{c_b \eps}{n}   \right ) ^ { (c_{bs} + c_ L  + c_M) \eps n  K_l \frac{m_2}{m_1} \eps^{-2} \log n    }   \\
&\sim  n^{ - c_b (c_{bs} + c_ L  + c_M) K_l  \frac{m_2}{m_1}  } \ge n^{-0.1}.
\end{align*}
for $K_l$ a small enough constant.
\end{proof}

\begin{prop}
 Whp at least two good components survives the brown edges without connecting to $G_L$ or the rest of $G_M$.
\end{prop}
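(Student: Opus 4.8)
The plan is to prove this by a second-moment argument, applying Lemma~\ref{chebylem} to the number of good components that survive the brown edges. First I would expose the $t_c-\eps$ graph and the green edges and condition on a realization for which Propositions~\ref{sizesprop} and~\ref{goodprop} hold; this fixes a family $\mathcal{G}=\{Y_1,\dots,Y_X\}$ of $X\ge n^{0.8}$ good components, each of size between $K_l\eps^{-2}\log n$ and $K_m\eps^{-2}\log n$, and gives $|G_L|,|G_M|=O(\eps n)$, while by Fact~2 of the coupling the brown edges are now a fresh independent source of randomness, every potential edge being brown with probability $p_b\sim c_b\eps/n$. For $1\le i\le X$ let $S_i$ be the indicator of the event of the preceding proposition with $Y=Y_i$, namely that $Y_i$ receives no brown edge to a vertex dangerous for $Y_i$; note that $S_i=1$ already forces $Y_i$ to avoid $Y_j$ and every small vertex attached to $Y_j$ for $j\ne i$, so any two surviving good components automatically lie in different components and neither is the giant. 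The preceding proposition gives $\E S_i\ge n^{-0.1}$, so $\mu:=\E\sum_i S_i\ge n^{0.7}\to\infty$, and it suffices to verify the hypothesis of Lemma~\ref{chebylem}: once $\sum_i S_i\sim\mu$ whp, whp $\sum_i S_i\ge 2$, which is the claim.

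For the pairwise correlations I would fix $i\ne j$ and expose all brown edges having both endpoints outside $Y_i\cup Y_j$. This determines a set $D$ consisting of $G_L$, of $G_M\setminus(Y_i\cup Y_j)$, and of all small vertices joined to these by exposed brown paths; by Proposition~\ref{smallprop} (with $Y=Y_i\cup Y_j$) we have $|D|\le c'\eps n$ except with probability $e^{-\Omega(\eps n)}$, which is negligible against everything below, and a Chernoff estimate on the branching-process exploration in that proof shows $|D|$ is concentrated closely enough that the conditional probability that no new brown edge joins $Y_i$ to $D$, namely $(1-p_b)^{|Y_i||D|}$, is deterministic up to a factor $1+o(1)$. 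Conditionally on the exposed edges, the remaining randomness is the brown edges touching $Y_i\cup Y_j$, and up to events of total probability $\mathrm{poly}(\eps^{-1})\,\mathrm{polylog}(n)/n$ one has $S_i=A_i\cap B$ and $S_j=A_j\cap B$, where $A_i$, $A_j$ are the events that no new brown edge joins $Y_i$, respectively $Y_j$, to $D$, and $B$ is the symmetric event that there is no new brown edge directly between $Y_i$ and $Y_j$ and no short new brown path between them through small vertices outside $D$. Since $A_i$, $A_j$ and $B$ are measurable with respect to pairwise disjoint blocks of unexposed brown edges they are independent, whence $\E[S_iS_j]=\Pr[A_i]\Pr[A_j]\Pr[B]$ and $\E S_i\,\E S_j=\Pr[A_i]\Pr[A_j]\Pr[B]^2$, so that
\[
\frac{\E[S_iS_j]}{\E S_i\,\E S_j}=\frac{1}{\Pr[B]}.
\]

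The event $B$ forbids only the $|Y_i||Y_j|=O(\eps^{-4}\log^2 n)$ potential $Y_i$-$Y_j$ edges, each present with probability $O(\eps/n)$, plus $\mathrm{poly}(\eps^{-1})\,\mathrm{polylog}(n)$ further configurations (short brown paths through shared small vertices) whose total probability is again $\mathrm{poly}(\eps^{-1})\,\mathrm{polylog}(n)/n$, so $\Pr[B]\ge 1-\mathrm{poly}(\eps^{-1})\,\mathrm{polylog}(n)/n$ and hence $\E[S_iS_j]\le(1+f(n))\,\E S_i\,\E S_j$ with $f(n)=\mathrm{poly}(\eps^{-1})\,\mathrm{polylog}(n)/n\to 0$ ($\eps$ being held fixed as $n\to\infty$). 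Lemma~\ref{chebylem} then gives $\sum_i S_i\sim\mu$ whp, finishing the proof. The step I expect to be the main obstacle is precisely the reduction ``$S_i=A_i\cap B$ up to lower-order events'': because $D$ is itself built from brown edges, a single shared small vertex, or a short brown path through several small vertices, can link $Y_i$ to $Y_j$ and couple their survival events, so one must order the exposure so that all such couplings land inside the one symmetric event $B$, and then check both that the residual configurations and the defect $1-\Pr[B]$ are $\mathrm{poly}(\eps^{-1})\,\mathrm{polylog}(n)/n$ and that $|D|$ concentrates well enough for the displayed factorization to hold up to $1+o(1)$; granting that, $\mu\to\infty$ and the appeal to Lemma~\ref{chebylem} are routine.
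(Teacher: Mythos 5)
Your proof takes essentially the same route as the paper's: a second-moment argument via Lemma~\ref{chebylem}, with the key estimate being that conditioning on one good component's survival changes another's survival probability by a factor $1+o(1)$ because only $O(\eps^{-4}\log^2 n)$ additional forbidden edges, each present with probability $O(\eps/n)$, enter the computation. Your treatment is more careful than the paper's terse one-line correlation bound---in particular you explicitly isolate the couplings through shared small vertices and brown paths in the event $B$ rather than only counting the $|Y_i|\,|Y_j|$ direct edges---but the underlying idea and the appeal to Lemma~\ref{chebylem} are the same.
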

\begin{proof}
The expected number of good components that survive the brown edges is $\ge n^{0.7}$.  We bound the variance.  Fix two good components, $Y$ and $Z$.  Let $X_Y$ and $X_Z$ be the indicator random variables that the respective components survive the brown edges.  From Lemma \ref{chebylem}, it is enough to show that $\E [X_Y | X_Z =1] = \E X_Y (1 + o(1))$.  Conditioning on $Z$ surviving, the number of edges that cannot be present changes by $|Z| \cdot |Y| \le K_l^2 \frac{m_2^2}{m_1^2} \eps^{-4} \log^2 n$, and each factor is $(1 - \Theta ( \frac{\eps}{n}))$, for a total change of $(1 + o(1))$ in the probability.
\end{proof}

So whp at least two of the components larger than $K_l \eps^{-2} \log n$ in $G_M$ avoid $G_L \cup G_M$ and all vertices in $G_S$ connected to them. This provides the lower bound for the second-largest component.

\subsection{Structure of Small Components}
\label{outsidegiant}

We prove the second part of Theorem \ref{supercriticalthm} that at $t_c +\eps$, all components apart from the giant component are simple.  Define the $L$-restricted susceptibility of $G$ to be
\[ S_L(G) = \frac{1}{n} \sum _{|C_i| \le L} |C_i|^2. \]
Note that $S_L(G) \le L$.  At step $j$ the probability that an edge is added within a component of size $\le L$ is
\[ \left (1- \frac{  X_1(j)^2}{n^2} \right) \sum_{|C_i|\le L} \frac{|C_i|^2}{n^2} = \left(1- \frac{  X_1(j)^2}{n^2} \right) \frac{S_L(j)}{n}   \le \frac{L}{n}.\]
The expected number of cyclic components of size $\le L$ created before time $t_c +\eps$ is $\le (t_c + \eps) \frac{n}{2} \frac{L}{n} \le L$ and so whp $\le L \log n$ such components are created. The probability a complex component of size $\le L$ is created at step $j$ is
\[ \le  \left (1- \frac{  X_1(j)^2}{n^2} \right) \sum_{C_i, C_k \text{ cyclic}, |C_i|, |C_k|\le L} \frac{|C_i| |C_k|}{n^2} \le \frac{L^2}{n^2} \cdot Z(j), \]
where $Z(j)$ is the number of cyclic components of size $\le L$ at step $j$.  By the above $Z(j) \le L \log n$ for all $j$ whp, so the expected number of complex components of size $\le L$ created by time $t_c +\eps $ is $\le \frac{L^3 \log n}{n}$.  If we set $L = K_u \eps^{-2} \log n$, this is $o(1)$, and so whp there are no complex components of size $\le K \eps^{-2} \log n$ at $t_c + \eps$, and therefore no complex components other than the giant component.

\section{Vertices in Small Components}
\label{smallsec}

Before proving Theorem \ref{smallpointsthm}, we give in Section \ref{ERprocess} a standard proof of the corresponding \ER result for comparison and another proof of it to illustrate a method based on a quasi-linear PDE and singularity analysis, which we will use in Section \ref{ODEanalysis} to prove  Theorem \ref{smallpointsthm}.

\subsection{The \ER Process}
\label{ERprocess}

If $\eps$ is fixed, and either positive or negative, we have the following exact formula (see e.g. Chapter 11 of \cite{alon2000probabilistic}):
\begin{equation}
\label{erexact}
x_i(t_c +\eps) = \lim_{n \to \infty} \Pr[|K(v)| = i ] = \frac{e^{- (1+\eps)i} ((1+\eps)i)^{i-1}   }{  i!}.
\end{equation}
Now using Stirling's formula we compute asymptotics in $i$:
\begin{align*}
 x_i(t_c +\eps) 
&= \frac{1} {\sqrt {2 \pi}} \frac{1}{1+\eps} i^{-3/2}  e^{-(\eps - \log (1+ \eps))i} \, (1+O(1/i)).
\end{align*}
As $\log(1+\eps)=\eps -\eps^2/2 +O(\eps^3)$, we have
\begin{equation*}
 x_i(t_c +\eps) = \frac{1 + O(\eps)} {\sqrt {2 \pi}}  i^{-3/2}  e^{-(\frac{\eps^2}{2} + O(\eps^3))i} \, (1+O(1/i)),
\end{equation*}
so
\begin{equation}
 \label{erxisstatement}
x_i(t_c +\eps) = C(\eps) i ^{-3/2}  e^{-d(\eps) \eps^2 i} \, (1+O(1/i)),
\end{equation}
where $C(\eps) = \frac{1}{\sqrt {2 \pi}} + O(\eps)$ and $d(\eps) = \frac{1}{2} + O(\eps)$.

\vf

Now we prove the same fact as above, but using a different method, the one we will later use to prove Theorem \ref{smallpointsthm}.  In Section \ref{ODEanalysis}, we write the differential equations for the functions $x_i(t)$ for the \BF process, describing the proportion of vertices in components of size $i$.  For the \ER process these equations are:

\begin{equation}\label{ERODE}
 x_i^\prime(t) =  - i x_i(t) + \frac{i}{2}\sum_{k<i} x_k(t) x_{i-k}(t)
\end{equation}
for $i \ge 1$, with initial conditions $x_1(0) =1$ and $x_i(0) = 0$ for $i \ge 2$.   We can define a function of two variables,
\begin{equation}
\label{ERptzdef}
 P(t,z) = \sum_{i \ge 1} x_i(t) z^i.
\end{equation}
Our goal is to extract the asymptotic behavior of the coefficient $x_i(t)$ of $P(t,z)$, using singularity analysis.

Multiplying by $z^i$ both sides of (\ref{ERODE}) and summing over $i$, since $\frac12 \frac{\partial [ P(t,z)^2]}{\partial z}= P(t,z) \frac{\partial P(t,z)}{\partial z}$,  we find a homogeneous quasi-linear PDE for the \ER  process:
\begin{equation}
\label{ERdpdt}
 \frac{\partial P(t,z)  }{\partial t} - z (P(t,z) -1) \frac{\partial P(t,z)}{\partial z}=0
\end{equation}
with an initial condition $P(0,z)=z$.  Using the method of characteristic curves (see for example, Section 3.2 of \cite{evans2010partial}), we note that a solution to (\ref{ERdpdt}) defines a surface $y= P(t,z)$ in $(t,z,y)$-space, and we obtain ODE's:
\begin{equation}
 \frac{dz}{dt} = -z(y-1) , \, \, \frac{dy}{dt} = 0
\end{equation}
with initial values $z(0) = z_0$ and $y(0) = y_0$, with $y_0 = z_0$ from the initial condition of the PDE.  We can solve these ODE's to get:
\begin{equation}
\label{yety}
 z(t) = ye^{t-ty }
\end{equation}
with $y= y_0 = z_0$ fixed.  We write:
\begin{equation}
 \label{zfyeq}
z = F(t,y) \text { with } F(t,y) = ye^{t-ty }.
\end{equation}

We now define a critical point with respect to (\ref{zfyeq}):
\begin{equation}
\label{tcrdef}
 t_{cr} =  \inf \Big\{ t: \frac{ \partial F(t,y) }{\partial y   }(t,1) =0   \Big\}.
\end{equation}
Note that for all $t$, $(t, 1,1)$ satisfies (\ref{yety}). Let us give some intuition for (\ref{tcrdef}).  We know that $P(t,1)=\sum x_i(t)$  amounts to the proportion of vertices in ``small'' components and that $P(t,1)=1$ for  $t<1$ and $P(t,1)<1$ for $t>1$. Furthermore, these "small" component are typically trees. Thus, loosely speaking, for fixed $t$, $y=P(t,z)$ would be equal to the generating function for vertex-rooted trees. From the generating function theory for trees (see e.g. \cite{flajolet2009analytic}), generating functions for various rooted trees exhibit a so-called square-root type singularity, which means that there is a value $\tau$ where the first derivative of the inverse function of $y$ vanishes. Thus we define $ t_{cr} $ as the smallest $t$ where the first derivative of the inverse function of $y$ vanishes as $z$ approaches 1.

For fixed $t$, we can write $z$ as a function of $y$, $z=F(y)= F(t,y)$.  If we invert and write $y$ as a function of $z$, we find a singularity where $\frac{d F}{d y} =0$.  Let $\tau$ be the $y$ so that $ \frac{d F}{d y}(\tau) =0$, and $\rho = F( \tau)$.  We find the Taylor expansion of $F(y)$ around $\tau$.
\begin{equation}
 z= \rho + F^\prime (\tau) (y- \tau) + \frac{1}{2} F^{\prime \prime} (\tau) (y - \tau)^2 + O((y- \tau)^3).
\end{equation}
We rewrite and solve for $y$, using the fact that $F^\prime (\tau) =0$ and $y$ increases along the real $z$-axis (since the coefficients of $y$ are non-negative):
\begin{align*}
 y(z) &= \tau - \sqrt {\frac{-2 \rho }{F^{\prime \prime }(\tau) } }\sqrt{1- \frac{z}{\rho} } +O\left(1- \frac{z}{\rho}\right).
\end{align*}

In summary, for each fixed $t$, the function $y=P(t,z)$ has the singular expansion around $\rho(t)$ of the form
\begin{align*}
 P(t,z) &= \tau(t) - \sqrt {\frac{-2 \rho(t) }{F^{\prime \prime }(\tau(t)) } }\sqrt{1- \frac{z}{\rho(t)} } +O\left(1- \frac{z}{\rho(t)}\right).
\end{align*}

Now using the transfer theorem (Theorem VI.3~\cite{flajolet1990singularity}) and the basic scaling (Chapter 6, \cite{flajolet2009analytic}), we extract the asymptotic behavior of the coefficient $x_i(t)$ of $P(t,z)$:
\begin{equation}
 x_i(t) =  \frac{c(t)}{2 \sqrt \pi} i^{-3/2} \gamma(t)^i \, (1+ O(1/i)),
\end{equation}
where
\begin{align*}
 c(t) &= \sqrt {\frac{-2 \rho(t) }{F^{\prime \prime }(\tau(t)) } },   \\
\gamma(t) &=  \frac{1 }{\rho(t)}.
\end{align*}

Now we analyze $c(t)$ and $\gamma(t)$.  Using (\ref{zfyeq}) we can calculate
\begin{equation}
\label{dzdyER}
 \frac{dF}{dy} = e^{t-ty} - tye^{t-ty}
\end{equation}
and
\begin{equation}
 \frac{d^2F}{dy^2} =t^2 y e^{t-ty} -2te^{t-ty}.
\end{equation}
Setting $\frac{dF}{dy}$ to $0$ gives $\tau(t) = \frac{1}{t}$ and
\begin{equation}
 \rho(t) = \frac{1}{t} e^{t-1}.
\end{equation}
From the definition of $t_{cr}$, we see that $t_{cr}=1$.
Since $\rho  (1)=1$, $\rho ^\prime (1)=0$ and $\rho ^{\prime \prime} (1) >0$, we can write
\begin{equation}
 \rho(1 + \eps) = 1 + \frac{1}{2} \eps^2 + O(\eps^3)
\end{equation}
and this gives
\begin{equation}
 \gamma(1+\eps) = 1 - \frac{1}{2} \eps^2 + O(\eps^3)
\end{equation}
and
\begin{equation}
\gamma(1+\eps)^i = e^{i \ln \gamma} = e^{- d(\eps)\eps^2 i }
 \end{equation}
where $d(\eps) = - \frac{1}{\eps^2}\ln (\gamma(1 +\eps) = \frac{1}{2} + O(\eps)$.
For $c(t)$ we have:
\begin{align*}
 c(1 +\eps) &= \sqrt {\frac{-2 (1 + \frac{1}{2}\eps^2 + O(\eps^3)) }{(1+\eps)e^\eps - 2(1+\eps)e^\eps }}   \\
&= \sqrt 2 (1+ O(\eps)).
\end{align*}
This recovers (\ref{erxisstatement}).

\subsection{The \BF Process}
\label{ODEanalysis}

\paragraph{Ordinary Differential Equations.}

We describe how the differential equations for the functions $x_i(t)$ for the \BF Process are derived.  The proof of convergence $\frac{X_i \left( t \, \frac{n}{2} \right)}{n} \, =  \,  x_i (t) + o(1)$ can be found in  \cite{bohman2006creating}, \cite{spencer2007birth}.

Begin with $X_1$, the number of isolated vertices in the graph.  The expected change in $X_1$ at step $j$ is (neglecting loops and repeated edges):
\begin{equation}
\E \Delta X_1(j) = -\frac{2 X_1(j)^2}{n^2} - 2 \left( 1- \frac{X_1(j)^2}{n^2} \right ) \frac{X_1(j)}{n}.
\end{equation}
Each edge added advances time by $\frac{2}{n}$, so we can approximate the derivative of $x_1(t)$ by dividing the expected change of $\frac{X_1}{n}$ by $\frac{2}{n}$:
\begin{equation}
\label{x1diffeq}
 x_1^\prime(t) = -x_1(t) - x_1^2(t) +x_1^3(t).
\end{equation}
By a similar calculation of the expected change in one round, we can write the differential equation for $x_2(t)$ and for general $x_i(t)$, $i>2$:
\begin{align}
\label{x2diffeq}
x_2^\prime(t) &= 2x_1^2(t) - x_1^4(t) - 2(1-x_1^2(t))x_2(t)    \\
\label{xidiffeq}
x_i^\prime(t) &=  \frac{i}{2} ( 1- x_1^2(t)) \sum_{k < i}  x_k(t) x_{i-k}(t) - i ( 1- x_1^2(t)) x_i(t).
\end{align}

\begin{prop}
\label{sumxisprop}
Let $t_c$ be the critical point of the \BF process.
The following hold concerning the functions $x_i(t)$, $i \ge 1$:
\begin{enumerate}
 \item[(i)] For $t< t_c$,
\[\sum_{i=1}^\infty x_i(t) = 1. \]
\item[(ii)] For $t>t_c$,
\[ \sum_{i=1}^\infty x_i(t) < 1. \]
\end{enumerate}
\end{prop}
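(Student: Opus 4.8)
The plan is to handle the two parts by different means: part (i) by summing the differential equations (\ref{x1diffeq})--(\ref{xidiffeq}), and part (ii) by invoking the giant component.

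For part (i), I would set $\Sigma(t):=\sum_{i\ge1}x_i(t)$ and $s_1(t):=\sum_{i\ge1}i\,x_i(t)$, the latter being precisely the limiting susceptibility, hence finite on $[0,t_c)$. Since $x_i\ge0$ this already gives $\Sigma(t)\le s_1(t)<\infty$ there, and one checks that $\sum_i|x_i'(t)|$ is bounded on compact subintervals of $[0,t_c)$, so that $\Sigma$ is differentiable with $\Sigma'=\sum_i x_i'$ and the double sums below may be rearranged freely. Summing the ODEs over all $i$: the ``gain'' terms contribute $(1-x_1^2)\sum_{k,j\ge1}\tfrac{k+j}{2}x_k x_j=(1-x_1^2)\,s_1\,\Sigma$, the ``loss'' terms contribute $-(1-x_1^2)\sum_{i\ge2}i\,x_i=-(1-x_1^2)(s_1-x_1)$, and together with $x_1'$ and the extra $x_1^2$ hidden in (\ref{x2diffeq}) (coming from the event that the \emph{first} edge directly joins two isolated vertices and forms a size-$2$ component) everything collapses to the linear ODE
\begin{equation*}
\Sigma'(t)=(1-x_1^2(t))\,s_1(t)\,(\Sigma(t)-1),\qquad \Sigma(0)=x_1(0)=1 .
\end{equation*}
Since $\Sigma\equiv1$ solves this with the correct initial value and the coefficient $(1-x_1^2)s_1$ is continuous on each $[0,t_c-\delta]$, uniqueness forces $\Sigma(t)=1$ for all $t<t_c$.

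There is also a softer route to part (i) that bypasses the ODE bookkeeping: the fraction of vertices in components of size $>N$ is at most $S_1(G)/N\to s_1(t)/N$, so once $s_1(t)<\infty$ (i.e.\ for $t<t_c$) we get $1-\Sigma(t)\le s_1(t)/N$ for every $N$, hence $\Sigma(t)=1$. For part (ii), fix $t>t_c$; by the definition of $t_c$ we have $|C_1(t)|\ge c(t)n$ whp for some constant $c(t)>0$, and for any fixed $N$ the component $C_1$ is whp larger than $N$, so whp at most $(1-c(t))n$ vertices lie in components of size $\le N$. Dividing by $n$ and letting $n\to\infty$ in (\ref{concentration}) gives $\sum_{i\le N}x_i(t)\le1-c(t)$, and then $N\to\infty$ gives $\Sigma(t)\le1-c(t)<1$. (Alternatively, part (ii) can be read off from the singular behaviour of $P(t,z)$ exactly as in the \ER computation of Section \ref{ERprocess}: past the critical point the dominant singularity $\rho(t)$ exceeds $1$, so $z=1$ is a regular point and $P(t,1)$ is strictly below the value of $P$ at $\rho(t)$, which is itself $<1$.)

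I do not expect a genuine obstacle. The one place that needs care is the justification in part (i): legitimately interchanging $\sum_i$ with $\frac{d}{dt}$ and rearranging $\sum_i\frac{i}{2}\sum_{k<i}x_k x_{i-k}$, both of which are fine on compact subsets of $[0,t_c)$ because $s_1$ is finite there; not forgetting the correction term forced by the special form of (\ref{x2diffeq}); and applying ODE uniqueness only away from the blow-up point $t_c$. The soft arguments make both parts essentially a bookkeeping exercise with the double limit $n\to\infty$ then $N\to\infty$.
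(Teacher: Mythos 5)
Your ``softer route'' for part (i) and your argument for part (ii) are essentially the paper's proof: part (i) in the paper uses the finiteness of the limiting susceptibility $s_1(t) = \lim S_1(G)$ to bound the fraction of vertices in components of size $>L$ by $s_1(t)/L$, and part (ii) cites the Janson--Spencer lower bound $|C_1(t_c+\eps)| \ge y\eps n$, which is what your ``by the definition of $t_c$'' amounts to. Your primary argument for part (i), summing the ODEs to obtain $\Sigma' = (1-x_1^2)\,s_1\,(\Sigma-1)$ and invoking uniqueness, is a genuinely different and correct route; you were right to spot the extra $x_1^2$ in the $x_2$-equation (which the generic formula for $i\ge 2$ does not produce), and your bookkeeping leading to the linear ODE checks out. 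The only real caveat is the interchange of $\sum_i$ and $d/dt$: pointwise boundedness of $\sum_i |x_i'(t)|$ on $[0,t_c-\delta]$ is not by itself the standard sufficient condition for term-by-term differentiation (one wants uniform convergence of $\sum_i x_i'$, or a Fubini argument writing $x_i(t+h)-x_i(t)=\int_t^{t+h}x_i'$ and interchanging sum and integral using the integrable majorant $(1-x_1^2)s_1(\Sigma+1)$). Either fix is easy, but the statement as written slightly understates the justification needed. Worth noting also that the ODE route does not escape the key analytic input: you still need $s_1(t)=\sum_i i x_i(t)<\infty$ for $t<t_c$, which is exactly the susceptibility concentration the paper uses directly in its shorter proof; so the extra machinery buys an alternative derivation but not weaker hypotheses.
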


\begin{proof}
(i)  Fix $\eps >0$.  From the concentration of susceptibility and the asymptotics of $s_1(t)$ \cite{spencer2007birth}, we know that for $t= t_c - \eps$,
\[ \frac{1}{n} \sum_ {v} |C(v)| =S_1(t\cdot  n/2)=s_1(t) +o(1)  \le \frac{K}{\eps}.\]
for some constant $K$ independent of $\eps$.

Thus for any $M$ there can be at most $\frac{n K \eps}{M}$ vertices in components of size $> M \eps^{-2}$. Choosing $M$ large enough, we see that
\[ \sum_{i=1}^{M \eps^{-2}} x_i(t_c - \eps) \ge 1- \frac{ K \eps}{M} + o(1). \]
Now let $M \to \infty$ to get the result.

\vf

(ii) Janson and Spencer\cite{1005.4494} show that at $t_c + \eps$ there is a component of size $\ge y \eps n$ for some constant $y$.  The vertices in this component are not counted towards any of the $x_i(t)$'s, so
\[\sum_{i=1}^\infty x_i(t_c+\eps) \le 1- y \eps \]
\end{proof}

\textit{Remark}: In fact, Riordan and Warnke \cite{1102.5306} have recently showed more: that $\sum_i x_i(t_c) =1$ and that for $t> t_c$, $\sum_i x_i(t) = 1 - y$, where the size of the giant at time $t$ is $\sim y n$.

\paragraph{Quasi-linear Partial Differential Equation.}
We again consider the generating function
\begin{equation}
 \label{PitBF}
P(t,z) = \sum_{i=1}^\infty x_i(t) z^i.
\end{equation}

We multiply  by $z^i$  the both side of (\ref{xidiffeq}) and sum over $i$. Since $\frac12 \frac{\partial P^2(t,z)}{\partial z}= P(t,z) \frac{\partial P(t,z)}{\partial z}$, using also (\ref{x1diffeq}),(\ref{x2diffeq}) we obtain a non-homogeneous quasi-linear PDE for the \BF process:
\begin{equation}
 \label{dpdtBF}
 \frac{\partial P(t,z)  }{\partial t} -z (1-x_1^2(t)) (P(t,z) -1) \frac{\partial P(t,z)}{\partial z} =z(z-1)x_1^2(t).
\end{equation}
Again we have the initial condition
\begin{equation}
 \label{BFinitcondit}
P(0,z) =z.
\end{equation}
If we let $f(t,x,P) =-z (1-x_1^2(t)) (P(t,z) -1) $ and $g(t,z) =  z(z-1)x_1^2(t)$, then we can rewrite (\ref{dpdtBF}) as
\begin{equation}
 \label{dpgfBF}
\frac{\partial P(t,z)  }{\partial t} + f(t,x,P)\frac{\partial P(t,z)}{\partial z} = g(t,z).
\end{equation}
A solution to this quasi-linear PDE defines an integral surface $y = P(t,z)$ in the $(t,z,y)$-space and we use the method of characteristics:
\begin{equation*}
 dt = \frac{dz}{f(t,z,y)} = \frac{dy}{g(t,z)}
\end{equation*}
or,
\begin{equation}
\label{BFODEs}
 \frac{dz}{dt} = f(t,z,y), \, \, \, \frac{dy}{dt} = g(t,z)
\end{equation}
with initial values $z_0$ and $y_0$ satisfying $z_0 = y_0$ due to (\ref{BFinitcondit}).  We solve the ODE's in (\ref{BFODEs}):
\begin{align}
\label{ztBF}
 z(t) &= z_0 \exp \left( - \int_0^t (1- x_1^2(s))(y(s)-1) \, ds \right )\\
\label{ytBF}
y(t) &= y_0 + \int_0^t z(s) (z(s)-1) x_1^2(s) \, ds.
\end{align}
Since $z_0 = y_0$ we write:
\begin{equation}
 \label{zztBF}
y(t) = z(t) \exp \left ( \int_0^t (1- x_1^2(s))(y(s)-1) \, ds  \right ) + \int_0^t z(s) (z(s)-1) x_1^2(s) \, ds.
\end{equation}
Let
\begin{equation*}
G(t,z,y)=-y + z\exp\Big(\int_{0}^{t}(1- x_1^2(s)) (y(s)-1)ds\Big) +\int_{0}^{t}z(s)(z(s)-1)x_1^2(s)ds.\label{eq.G}
\end{equation*}
Equation (\ref{zztBF})  implicitly defines $y$ as a function of $t$ and $z$ by
\begin{equation}
\label{zeqft}
G(t,z,y)=0.
\end{equation}
Notice that for all $t$, $y=z=1$ is a solution corresponding to the initial condition $z_0 = y_0 =1$.  We define
\begin{equation}
 \label{tcdefBF}
t_{cr} = \inf \Big\{t: \exists \text{ a solution to } G(t,z,y)=0: z\ge 1, y<1\Big\}.
\end{equation}
Proposition \ref{sumxisprop} shows that this is well-defined and that $t_{cr} = t_c$, where $t_c$ is the critical point of the \BF process.

\vt

\paragraph{Singularity Analysis.}
For a fixed $t$, we let $G(t,z,y)= G(z,y)$ and let
\begin{equation}
\label{eq.F}
F(z,y)=z \exp\Big(\int_{0}^{t}(1- x_1^2(s)) (y(s)-1)ds\Big) +\int_{0}^{t}z(s)(z(s)-1)x_1^2(s)ds,
\end{equation}
so that
\begin{equation*}
 G(z,y)=-y +F(z,y).
\end{equation*}
Note that $G(z,y)=0$ defines implicitly $y$ as a function of $z$.

Below we use the notation $G_z=\frac{\partial G}{\partial z}, G_y=\frac{\partial G}{\partial y},G_{yy}=\frac{\partial^2 G}{\partial^2 y}, F_z=\frac{\partial F}{\partial z}$ and $F_{yy}=\frac{\partial^2 F}{\partial^2 y}$.
We let $(\rho, \tau)$ be the solution to
\begin{eqnarray*}
G(\rho,\tau)&=&0\\
G_y(\rho,\tau)&=&0.
\end{eqnarray*}
Then, the singular implicit functions theorem (see e.g. Lemma VII.3. in \cite{flajolet2009analytic}) says that for any $\theta>0$,
there exists a neighborhood $D_{\theta}(\rho)$ of $\rho$ such that at every point $z\in D_{\theta}(\rho)-R_{\theta}(\rho)$,
(where $R_{\theta}(\rho)=\{z: z = \rho+ s (\cos \theta + i \sin \theta), s \ge 0\}$ the ray of angle $\theta$ emanating from $\rho$)
the equation $G(z,y)=0$ admits two analytic solutions $y_1(z)$ and $y_2(z)$
that satisfy
\begin{eqnarray*}
y_1(z) &=& \tau - \sqrt{\frac{2\rho G_z(\rho,\tau)}{G_{yy}(\rho,\tau)}} \Big(1-\frac{z}{\rho}\Big)^{1/2} +O\Big(1-\frac{z}{\rho}\Big)\\
y_2(z) &=& \tau + \sqrt{\frac{2\rho G_z(\rho,\tau)}{G_{yy}(\rho,\tau)}} \Big(1-\frac{z}{\rho}\Big)^{1/2} +O\Big(1-\frac{z}{\rho}\Big).
\end{eqnarray*}
However, since the coefficients of $y=y(z)$ (in the powers of $z$) are all non-negative, $y(z)$ is increasing along the real $z$-axis and therefore,
$y_1(z)$ is the right solution that we are looking for. For notational convenience let us use $y(z)$ instead of $y_1(z)$.
Since $G_z(z,y) = F_z(z,y)$ and $G_{yy}(z,y) = F_{yy}(z,y)$, we have
\begin{eqnarray*}
y(z)
&=&\tau - \sqrt{\frac{2\rho F_z(\rho,\tau)}{F_{yy}(\rho,\tau)}} \Big(1-\frac{z}{\rho}\Big)^{1/2} +O\Big(1-\frac{z}{\rho}\Big),
\end{eqnarray*}
In Proposition \ref{fzfyy} we show that $\frac{2\rho F_z(\rho,\tau)}{F_{yy}(\rho,\tau)}>0$.

In summary, for each fixed $t$,  the function $y=P(t,z)$ has the singular expansion around $\rho(t)$ of the form
\begin{eqnarray*}
P(t,z)
&=&\tau(t) - \sqrt{\frac{2\rho(t) F_z(\rho(t),\tau(t))}{F_{yy}(\rho(t),\tau(t))}} \left(1-\frac{z}{\rho(t)}\right)^{1/2} +O\left(1-\frac{z}{\rho(t)}\right).
\end{eqnarray*}
Now we use the transfer theorem (Theorem VI.3~\cite{flajolet1990singularity}) and the basic scaling (Chapter 6, \cite{flajolet2009analytic}) to extract the asymptotic formula of the coefficient $x_i(t)$ of $P(t,z)$:
\begin{equation}
\label{BFxipre}
 x_i(t) = \frac{c(t)}{2 \sqrt \pi} i^{-3/2} \gamma(t)^i \, (1+ O(1/i)),
\end{equation}
where
\begin{align}
\label{BFcteq}
 c(t) &= \sqrt {\frac{2 \rho(t) F_z(\rho(t),\tau(t)] }{F_{yy }(\rho(t),\tau(t)) } }   \\
\label{BFgteq}
\gamma(t) &=  \frac{1 }{\rho(t)}.
\end{align}

\vf

\paragraph{Existence and Properties of $\rho$.}
In order to simplify  notation in the proof we now make some definitions:
\begin{eqnarray*}
u(t,y)&=&\exp\Big(\int_{0}^{t}(1- x_1^2(s)) (y(s)-1)ds\Big),\\
v(t,z)&=&\int_{0}^{t}z(s)(z(s)-1)x_1^2(s)ds,\\
q(t,z)&=&\int_{0}^{t}(2z(s)-1)x_1^2(s)ds,\\
\alpha(t)&=&\frac{1-x_1^2(t)}{x_1^2(t)},\\
\beta(t)&=&\int_{0}^{t}(1- x_1^2(s))ds.
\end{eqnarray*}
Given a fixed $t$, we let
\begin{eqnarray*}
u(y)=u(t,y),\quad v(z)=v(t,z),\quad q(z)=q(t,z),\quad \alpha=\alpha(t), \quad \beta=\beta(t).
\end{eqnarray*}
We rewrite (\ref{eq.F}) as
\begin{eqnarray}\label{eq.Fnew}
F(z,y)
&=&z u(y) +v(z)
\end{eqnarray}
and differentiate  it  with respect to $y$:
\begin{eqnarray}
F_{y}(z,y)
&=& z u(y) \frac{\partial }{\partial y}\int_{0}^{t}(1- x_1^2(s)) (y(s)-1)ds\nonumber\\
&=& z u(y) \int_{0}^{t}(1- x_1^2(s))ds\nonumber\\
&=& \beta  z u(y).\label{eq.Fy}
\end{eqnarray}
Therefore, a solution $(\rho,\tau)$ to $G(z,y)=0$ and $G_y(z,y)=0$ (equivalently $y=F(z,y)$ and $F_y(z,y)=1$) must satisfy
\begin{eqnarray*}
y=F(z,y) =z u(y)+v(z),\quad
1=F_y(z,y)
&=& \beta z u(y).
\end{eqnarray*}
That is, 
\begin{eqnarray*}
z &=&\frac{1}{\beta u(y)},\quad 
y = \frac{1}{\beta}+v(z).
\end{eqnarray*}
We let $\rho$  be the smallest  positive real-valued solution of the function $y=y(z)$ that  satisfies the equation 
\begin{eqnarray*}
z \beta u(1/\beta+v(z)) =1.
\end{eqnarray*}
Pringsheim's theorem (Theorem IV.6 in \cite{flajolet2009analytic}) guarantees the existence of such a solution, since the coefficients of $y=y(z)$ (in the powers of $z$) are all non-negative.

\vf
Below  in Propositions \ref{fzfyy}, \ref{rhop} and \ref{rhopp}  we prove that $\frac{F_z(\rho,\tau)}{F_{yy}(\rho,\tau)}>0$, $\rho^\prime(t_{c}) =0$, and $\rho^{\prime \prime}(t_{c}) > 0$.

\begin{prop}
\label{fzfyy}
\begin{equation*}
 \frac{F_z(\rho,\tau)}{F_{yy}(\rho,\tau)}>0
\end{equation*}
\end{prop}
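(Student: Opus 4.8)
The plan is to compute $F_z(\rho,\tau)$ and $F_{yy}(\rho,\tau)$ directly from the closed form $F(z,y)=z\,u(y)+v(z)$ derived above, and then check the sign of each. The key simplification is that the two factors in the ratio decouple nicely: $F_z$ only ``sees'' the $v$-term through its $z$-derivative plus the $u(y)$ coefficient, while $F_{yy}$ only involves $u$.

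First I would record the three needed partial derivatives. From (\ref{eq.Fnew}), $F_z(z,y)=u(y)+v'(z)$, where $v'(z)=\frac{\partial}{\partial z}\int_0^t z(s)(z(s)-1)x_1^2(s)\,ds$; differentiating the characteristic ODE $\frac{dz}{dt}=f(t,z,y)$ shows $v'(z)$ can be expressed via $q(z)=\int_0^t(2z(s)-1)x_1^2(s)\,ds$ (this is exactly why $q$ was defined). Since along the relevant characteristic $z(s)\ge 1$ for $s$ near $t$ in the supercritical-to-critical range and $x_1^2(s)>0$, one gets $v'(z)\ge 0$; combined with $u(y)=\exp(\cdots)>0$ (an exponential is always positive), this gives $F_z(\rho,\tau)>0$. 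Second, from (\ref{eq.Fy}) we have $F_y(z,y)=\beta\,z\,u(y)$, so differentiating once more in $y$ and using $\frac{\partial u}{\partial y}=u(y)\int_0^t(1-x_1^2(s))\,ds=\beta\,u(y)$, we obtain $F_{yy}(z,y)=\beta^2\,z\,u(y)$. Now $\beta=\beta(t)=\int_0^t(1-x_1^2(s))\,ds>0$ because $x_1(s)\le e^{-s}<1$ for $s>0$ (the bound on $x_1$ recorded in the proof of Proposition~\ref{Propnonisol}), $z=\rho>0$ by Pringsheim, and $u(\tau)>0$; hence $F_{yy}(\rho,\tau)=\beta^2\rho\,u(\tau)>0$. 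Therefore the ratio $F_z(\rho,\tau)/F_{yy}(\rho,\tau)$ is a quotient of two strictly positive quantities and is positive.

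The only genuinely delicate point is justifying that $v'(\rho)\ge 0$, i.e., that $F_z(\rho,\tau)$ really is positive rather than merely nonzero. This requires knowing that the characteristic curve $z(s)$ feeding into the point $(\rho,\tau)$ satisfies $z(s)\ge 1$ (or at least that $\int_0^t(2z(s)-1)x_1^2(s)\,ds$-type expression keeps the right sign) — a fact that should follow from the definition (\ref{tcdefBF}) of $t_{cr}=t_c$ as the infimum of $t$ admitting a solution with $z\ge 1$, together with monotonicity of the characteristics in the initial data. I expect this monotonicity/sign bookkeeping along characteristics to be the main obstacle; the derivative computations themselves are routine once $F=z\,u(y)+v(z)$ and $F_y=\beta z u(y)$ are in hand. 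If a clean characteristic argument is awkward, an alternative is to argue by continuity from the $\eps\to 0$ limit, where $\rho\to 1$, $\tau\to 1$ and one can compute the ratio's limiting value explicitly (analogous to the $c(1+\eps)=\sqrt 2(1+O(\eps))$ computation in the \ER\ case), then invoke that $F_z$ and $F_{yy}$ are continuous and nonvanishing in a neighborhood of $t_c$.
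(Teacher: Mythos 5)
Your proposal matches the paper's proof step for step: the paper likewise computes $F_z(\rho,\tau)=u(\tau)+q(\rho)$ and $F_{yy}(\rho,\tau)=\beta^2\rho\,u(\tau)$ and concludes from $u(\tau),q(\rho),\beta,\rho>0$. The one delicate point you flag --- the sign of $q(\rho)=\int_0^t(2z(s)-1)x_1^2(s)\,ds$ --- is in fact simply asserted in the paper without comment; since $u(\tau)>0$, even $q(\rho)\ge 0$ suffices, and for $t$ near $t_c$ this follows because the characteristic through $(\rho,\tau)\approx(1,1)$ has $z(s)\approx 1$, so $q(\rho)\approx\int_0^t x_1^2(s)\,ds>0$, which is essentially the continuity argument you sketch at the end.
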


\begin{proof}
Differentiating (\ref{eq.Fnew}) with respect to $z$ we have
\begin{eqnarray*}
F_z(z,y)
&=&\frac{\partial }{\partial z} \Big(z u(y) +v(z)\Big)\nonumber\\
&=&u(y) +\frac{\partial}{\partial z}\Big(\int_{0}^{t}z(s)(z(s)-1)x_1^2(s)ds\Big)
\nonumber\\
&=&u(y) +q(z),
\end{eqnarray*}
and in particular, we have $F_z(\rho,\tau)=u(\tau)+q(\rho)$.

We differentiate (\ref{eq.Fy}) with respect to $y$:
\begin{eqnarray*}
F_{yy}(z,y)
&=&\beta z \frac{\partial }{\partial y}u(y)
=\beta^2 z u(y),
\end{eqnarray*}
and we have $F_{yy}(\rho,\tau)=\beta^2 \rho u(\tau)$.

Since  $u(\tau), q(\rho),\beta, \rho >0$, we have $F_z(\rho,\tau)>0$ and $F_{yy}(\rho,\tau)>0$.
\end{proof}

\begin{prop}
\label{rhop}
\begin{equation*}
 \rho'(t_{c})=0
\end{equation*}
\end{prop}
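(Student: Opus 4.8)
The plan is to reduce Proposition \ref{rhop} to two facts — $\rho(t_c)=1$ and $\tau(t_c)=1$, i.e.\ that at the critical time the branch point $(\rho(t),\tau(t))$ of $P(t,\cdot)$ coincides with the trivial fixed point $(1,1)$ of $G(t,z,y)=0$ — and then to deduce $\rho'(t_c)=0$ from the fact that $\rho$ is minimized at $t_c$ (equivalently, from an envelope identity along the critical characteristic).

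\emph{Step 1: the branch point at $t_c$ is $(1,1)$.} Since $\rho(t)$ is the Pringsheim singularity of $P(t,z)=\sum_{i\ge1}x_i(t)z^i$ and $\sum_i x_i(t)\le 1<\infty$ for every $t$ (it is a fraction of vertices; Proposition \ref{sumxisprop}), the radius of convergence is at least $1$, so $\rho(t)\ge 1$ for all $t$ near $t_c$. The pair $(1,1)$ is always a solution of $G(t,z,y)=0$, corresponding to the trivial characteristic $z(s)\equiv y(s)\equiv 1$, along which $u(t,1)=1$ and $v(t,1)=0$; hence $G_y(t,1,1)=F_y(t,1,1)-1=\beta(t)-1$ by (\ref{eq.Fy}). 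Now invoke $t_{cr}=t_c$ (Proposition \ref{sumxisprop}): for $t>t_c$ one has $P(t,1)=\sum_i x_i(t)<1$, so $y=1$ and $y=P(t,1)$ are two distinct solutions of $G(t,1,y)=0$, whereas $P(t_c,1)=1$ (Proposition \ref{sumxisprop} together with continuity of $t\mapsto\sum_i x_i(t)$ at $t_c$, or the refinement of \cite{1102.5306}). If $\rho(t_c)>1$ then $P(t_c,\cdot)$ is analytic at $z=1$, so $G_y(t_c,1,1)\neq 0$ and the implicit function theorem forces a \emph{unique} solution $y=\psi(t)$ of $G(t,1,y)=0$ near $(t_c,1,1)$ — contradicting the two distinct branches for $t>t_c$. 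Therefore $G_y(t_c,1,1)=0$, the singularity reaches $z=1$, and $\rho(t_c)=1$, $\tau(t_c)=\lim_{z\to1^-}P(t_c,z)=P(t_c,1)=1$ by Abel's theorem; in passing, $\beta(t_c)=1$.

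\emph{Step 2: conclusion.} Apply the implicit function theorem to the system $G(t,z,y)=0$, $G_y(t,z,y)=0$ near $(t_c,1,1)$. Its Jacobian in $(z,y)$ at $(t_c,1,1)$ has determinant $G_zG_{yy}$ (the $G_y$-term vanishes there), which equals $F_zF_{yy}>0$ by Proposition \ref{fzfyy} (whose proof gives $F_z(\rho,\tau)>0$ and $F_{yy}(\rho,\tau)>0$). Hence $\rho,\tau$ are differentiable — indeed analytic — near $t_c$. Since $\rho(t)\ge 1=\rho(t_c)$ for $t$ near $t_c$ (Step 1), $t_c$ is a local minimum of $\rho$, so $\rho'(t_c)=0$. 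Equivalently, differentiating $\rho(t)=z(t;z_0(t))$ along the critical characteristic (which starts at $z_0(t)=1/\beta(t)$, from $F_y(\rho,\tau)=1$), the $z_0'(t)$-term drops out because $\partial z(t;z_0)/\partial z_0=0$ at the branch point, leaving the envelope identity $\rho'(t)=-\rho(t)(1-x_1^2(t))(\tau(t)-1)$, which vanishes at $t_c$ since $\tau(t_c)=1$; this second form is the one that will feed into Proposition \ref{rhopp}. Both can be checked against the \ER case, where $\rho(t)=t^{-1}e^{t-1}$ and indeed $\rho'(1)=0$.

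The only real obstacle is Step 1: making rigorous the identity $P(t_c,1)=1$ (continuity of $\sum_i x_i$ at the critical point) and the localization ensuring that it is the Pringsheim singularity, not some extraneous root of $G=G_y=0$, that tends to $(1,1)$ as $t\to t_c$; granting this, Step 2 is a routine application of the implicit function theorem together with elementary calculus.
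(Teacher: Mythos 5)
Your proposal is correct, and it differs in substance from the paper's argument, so a comparison is warranted. The paper proves $\rho'(t_c)=0$ purely computationally: it implicitly differentiates $G(t,\rho(t),\tau(t))=0$ with respect to $t$, observes that the coefficient $G_3(t,\rho,\tau)=-1+\beta\rho u(\tau)$ of $\tau'(t)$ vanishes by the branch-point condition, and solves to get an explicit formula $\rho'(t)=-G_1(t,\rho,\tau)/G_2(t,\rho,\tau)$; it then notes that $G_1(t_c,1,1)=0$ because both $\tau(t_c)-1$ and $\rho(t_c)-1$ vanish. The fact that $\rho(t_c)=\tau(t_c)=1$ is simply asserted (it is taken from the definition of $t_{cr}$ and Proposition \ref{sumxisprop}); the paper does not unpack it. Your Step 1 does unpack it, which is a useful addition, since the paper's assertion really does need the continuity $\sum_i x_i(t_c)=1$ (the Riordan--Warnke refinement the paper itself cites) and an implicit-function/Pringsheim argument to see that the branch point converges to $(1,1)$; you correctly flag these as the remaining points to make fully rigorous. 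Your Step 2 then replaces the computation entirely: since the $x_i(t)$ are a subprobability sequence, the radius of convergence of $P(t,\cdot)$ is $\ge 1$ for all $t$, so $\rho(t)\ge 1=\rho(t_c)$ makes $t_c$ an interior minimum; differentiability of $\rho$ follows from the implicit function theorem applied to the system $G=G_y=0$ with nonvanishing Jacobian determinant $G_zG_{yy}=F_zF_{yy}>0$ (Proposition \ref{fzfyy}); hence $\rho'(t_c)=0$. This is cleaner and more conceptual than the paper's route, and it isolates exactly the inputs actually needed ($\rho(t_c)=1$, $\rho\ge 1$, smoothness). Your supplementary envelope identity $\rho'(t)=-\rho(t)(1-x_1^2(t))(\tau(t)-1)$ is also correct (it is just the characteristic flow velocity $dz/dt$ evaluated at the critical characteristic, the $z_0'$-term dropping out because $\partial z/\partial z_0=0$ there); it gives the same conclusion at $t_c$ and in fact is a tidier formula than the paper's $-G_1/G_2$. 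The one thing the paper's computational route buys that yours does not directly is an explicit expression for $\rho'(t)$ that it then re-differentiates in Proposition \ref{rhopp}; your envelope identity would serve the same purpose and could replace it, but your local-minimum argument by itself would not.
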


\begin{proof}
By $G_i(t,z,y)$ we mean the derivative of  $G(t,z,y)$  in $(t,z,y)$ with respect to the $i$-th variable.

We differentiate  $G(t,z,y)=G(t,z(t),y(t))$ with respect to $t$:
\begin{eqnarray*}
\frac{\partial }{\partial t}G(t,z(t),y(t)) &=&G_1(t,z,y)+G_2(t,z,y) z'(t)+G_3(t,z,y)y'(t),
\end{eqnarray*}
where
\begin{eqnarray}
G_1(t,z,y)&=&z u(t,y) (1- x_1^2(t)) (y-1) + z(z-1)x_1^2(t)\label{eq.G1}\\
G_2(t,z,y)&=&u(t,y)+q(t,z)\nonumber\\
G_3(t,z,y)&=&-1+\beta(t) z u(t,y).\label{eq.G3}
\end{eqnarray}

As $\rho(t),\tau(t)$ satisfies $0=G(t, \rho(t),\tau(t))$, given a fixed $t$, we have at $(\rho,\tau)=(\rho(t),\tau(t))$,
\begin{eqnarray}\label{rhoP}
0&=&G_1(t,\rho,\tau)+ G_2(t,\rho,\tau)\rho'(t)+G_3(t,\rho,\tau)\tau'(t).
\end{eqnarray}
But
\begin{eqnarray*}
G_1(t,\rho,\tau)&=&\rho u(\tau) (1- x_1^2(t)) (\tau-1) + \rho(\rho-1)x_1^2(t)\\
G_2(t,\rho,\tau)&=&u(\tau)+q(\tau)>0\\
G_3(t,\rho,\tau)&=&-1+\beta \rho u(\tau)=0,
\end{eqnarray*}
and so, from (\ref{rhoP}) 
\begin{eqnarray*}
\rho'(t) =-\frac{G_1(t,\rho,\tau)}{G_2(t,\rho,\tau)}= -\frac{\rho u(\tau) (1- x_1^2(t)) (\tau-1) + \rho(\rho-1)x_1^2(t)}{u(\tau)+q(\tau)}.
\end{eqnarray*}
Since $\rho(t_{c})=\tau(t_{c})=1$, we have
\begin{eqnarray*}
\rho'(t_{c})=0.
\end{eqnarray*}

\end{proof}

\begin{prop}
\label{rhopp}
\begin{equation*}
 \rho''(t_{c})>0
\end{equation*}
\end{prop}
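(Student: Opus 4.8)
The plan is to extract $\rho''(t_c)$ by differentiating once more the first-order relation established in Proposition \ref{rhop}, and then to check the sign. Keeping the notation $G_1,G_2,G_3$ for the partial derivatives of $G(t,z,y)$ in $t,z,y$, Proposition \ref{rhop} gives $\rho'(t)\,G_2(t,\rho(t),\tau(t))=-G_1(t,\rho(t),\tau(t))$. At $t_c$ we have $(\rho(t_c),\tau(t_c))=(1,1)$ (this is where $t_{cr}=t_c$ enters) and $\rho'(t_c)=0$; moreover $G_1(t_c,1,1)=0$, since by $(\ref{eq.G1})$ each of the two summands of $G_1$ carries a factor $(y-1)$ or $(z-1)$ that vanishes at $(1,1)$. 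Differentiating the relation in $t$ and evaluating at $t_c$ therefore yields
\[
\rho''(t_c)\,G_2(t_c,1,1)=-\Big(G_{1,t}(t_c,1,1)+G_{1,y}(t_c,1,1)\,\tau'(t_c)\Big),
\]
where $G_{1,t},G_{1,y}$ denote the partials of $G_1$ in $t,y$, and the $\rho'(t_c)$-contribution has dropped out. So I will need the three numbers $G_2(t_c,1,1)$, $G_{1,t}(t_c,1,1)$, $G_{1,y}(t_c,1,1)$ and the one derivative $\tau'(t_c)$.

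These computations are made easy by one observation: the characteristic through $(1,1)$ at time $t_c$ is the constant curve $z(s)\equiv y(s)\equiv 1$. Indeed the constant curve solves the characteristic system $(\ref{BFODEs})$ with $z_0=y_0=1$ and passes through $(1,1)$ at every time, so by uniqueness it is the one reaching $(\rho(t_c),\tau(t_c))=(1,1)$. Hence the integrand defining $u$ vanishes along it, giving $u(t_c,1)=1$ and $\partial_t u(t_c,1)=0$, while $q(t_c,1)=\int_0^{t_c}x_1^2(s)\,ds$. Using these, together with $(\ref{eq.Fy})$ (which gives $\partial_y u=\beta u$): $G_2(t_c,1,1)=u(t_c,1)+q(t_c,1)=1+\int_0^{t_c}x_1^2(s)\,ds>0$; $G_{1,t}(t_c,1,1)=0$ (again each term of $G_1$ in $(\ref{eq.G1})$ carries a factor vanishing at $(1,1)$, and $z,y$ are held fixed while differentiating); and $G_{1,y}(t_c,1,1)=u(t_c,1)\big(1-x_1^2(t_c)\big)=1-x_1^2(t_c)$. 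Thus
\[
\rho''(t_c)=-\frac{\big(1-x_1^2(t_c)\big)\,\tau'(t_c)}{1+\int_0^{t_c}x_1^2(s)\,ds},
\]
and it remains to show $\tau'(t_c)<0$. For this I will differentiate the second defining relation $G_3(t,\rho(t),\tau(t))=G_y(t,\rho(t),\tau(t))=0$ in $t$; since $\rho'(t_c)=0$ this gives $\tau'(t_c)=-G_{3,t}(t_c,1,1)/G_{3,y}(t_c,1,1)$. From $(\ref{eq.G3})$, $G_3=-1+\beta(t)\,z\,u(t,y)$, so $G_{3,y}=\beta(t)^2 z\,u(t,y)$, whence $G_{3,y}(t_c,1,1)=\beta(t_c)^2>0$; and $G_{3,t}=\beta'(t)\,z\,u(t,y)+\beta(t)\,z\,\partial_t u$ with $\beta'(t)=1-x_1^2(t)$, so $G_{3,t}(t_c,1,1)=\big(1-x_1^2(t_c)\big)\cdot 1\cdot 1+\beta(t_c)\cdot 1\cdot 0=1-x_1^2(t_c)$. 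Hence $\tau'(t_c)=-(1-x_1^2(t_c))/\beta(t_c)^2<0$, using $x_1(t_c)\le e^{-t_c}<1$ and $\beta(t_c)=\int_0^{t_c}(1-x_1^2(s))\,ds>0$. Substituting back gives
\[
\rho''(t_c)=\frac{\big(1-x_1^2(t_c)\big)^2}{\beta(t_c)^2\Big(1+\int_0^{t_c}x_1^2(s)\,ds\Big)}>0.
\]

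Two points of care. First, $\rho''(t_c)$ should be seen to exist: the Jacobian of the system $\{G=0,\ G_y=0\}$ at $(t_c,1,1)$ is lower triangular with diagonal entries $G_z(t_c,1,1)=1+\int_0^{t_c}x_1^2>0$ and $G_{yy}(t_c,1,1)=\beta(t_c)^2>0$, so the implicit function theorem makes $\rho,\tau$ smooth near $t_c$. Second, and this is the real obstacle, one must justify the bookkeeping for the partials of $u$: since $u(t,y)$ depends on $t$ both through an integration limit and through the characteristic on which $(t,y)$ sits, the naive derivative formulas acquire correction terms, but each correction is proportional to a quantity that vanishes at $(1,1)$ precisely because the relevant characteristic there is the constant one, so the formulas used above are legitimate. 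As a consistency check, $v(t_c,1)=0$ together with $\tau(t_c)=1/\beta(t_c)+v(t_c,1)$ forces $\beta(t_c)=1$, in which case the answer reads $\rho''(t_c)=\big(1-x_1^2(t_c)\big)^2/\big(1+\int_0^{t_c}x_1^2(s)\,ds\big)$.
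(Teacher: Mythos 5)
Your proof is essentially the same implicit-differentiation argument the paper gives for Proposition~\ref{rhopp}: it uses $\rho(t_c)=\tau(t_c)=1$, $\rho'(t_c)=0$, $G_1(t_c,1,1)=0$, and the vanishing of $G_3$ along $(\rho(t),\tau(t))$ to isolate $\rho''(t_c)$, computes $\tau'(t_c)$ from the second defining relation $G_y=0$, and then reads off positivity of each factor. The only notable difference in bookkeeping is that you differentiate the reduced relation $\rho'(t)\,G_2=-G_1$ once, rather than expanding $\frac{\partial^2}{\partial t^2}G=0$ in full and letting the cross terms cancel; this lands on the same identity $\rho''(t_c)=-G_{13}\tau'(t_c)/G_2=G_{13}G_{31}/(G_2 G_{33})$ more directly. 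What your writeup genuinely adds is the explicit observation that the characteristic through $(1,1)$ is the constant one, so $u(t_c,1)=1$, $v(t_c,1)=0$, and hence $\beta(t_c)=1$; the paper stops at $\rho''(t_c)=G_{13}G_{31}/(G_2 G_{33})>0$ in terms of $u(\tau),q(\rho),\beta$ and does not record the clean formula $\rho''(t_c)=(1-x_1^2(t_c))^2/\bigl(1+\int_0^{t_c}x_1^2(s)\,ds\bigr)$ that you obtain. Your ``second point of care'' about the hidden $t$-dependence of $u$ through the characteristic is also worth keeping: the paper's partial-derivative formulas are written as if $u(t,y)$ were an ordinary two-variable function, and your remark that the correction terms are all proportional to $(y-1)$ or $(z-1)$ (hence vanish at $(1,1)$) is exactly the justification needed for those formulas to be valid at the critical point, which the paper leaves implicit.
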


\begin{proof}
By $G_{ij}(t,z,y)$ we mean the derivative of $G_{i}(t,z,y)$  in $(t,z,y)$ with respect to the $j$-th variable.

We differentiate  $G(t,z,y)=G(t,z(t),y(t))$ twice with respect to $t$:
\begin{eqnarray*}
&&\frac{\partial^2 }{\partial t^2}G(t,z,y)\\
&=&\frac{\partial}{\partial t}\Big(G_1(t,z,y)+G_2(t,z,y) z'(t)+G_3(t,z,y)y'(t)\Big)\\
&=&G_{11}(t,z,y)+G_{12}(t,z,y)z'(t)+G_{13}(t,z,y)y'(t)\nonumber\\
&&+\Big(G_{21}(t,z,y)+G_{22}(t,z,y)z'(t)+G_{23}(t,z,y)y'(t)\Big)z'(t)+G_{2}(t,z,y)z''(t)\nonumber\\
&&+\Big(G_{31}(t,z,y)+G_{32}(t,z,y)z'(t)+G_{33}(t,z,y)y'(t)\Big)y'(t)+G_{3}(t,z,y)y''(t).
\end{eqnarray*}

Given a fixed $t$, we have,  at $(\rho,\tau)$,
\begin{eqnarray*}
0&=&G_{11}(t,\rho,\tau)+G_{12}(t,\rho,\tau)\rho'(t)+G_{13}(t,\rho,\tau)\tau'(t)\nonumber\\
&&+\Big(G_{21}(t,\rho,\tau)+G_{22}(t,\rho,\tau)\rho'(t)+G_{23}(t,\rho,\tau)\tau'(t)\Big)\rho'(t)+G_{2}(t,\rho,\tau)\rho''(t)\nonumber\\
&&+\Big(G_{31}(t,\rho,\tau)+G_{32}(t,\rho,\tau)\rho'(t)+G_{33}(t,\rho,\tau)\tau'(t)\Big)\tau'(t)+G_{3}(t,\rho,\tau)\tau''(t).
\end{eqnarray*}
Since $G_{3}(t,\rho,\tau)=0, G_2(t,\rho,\tau)>0$, we have
\begin{eqnarray*}
\rho''(t)
&=&-\frac{G_{11}(t,\rho,\tau)+G_{12}(t,\rho,\tau)\rho'(t)+G_{13}(t,\rho,\tau)\tau'(t)}{G_{2}(t,\rho,\tau)}\nonumber\\
&&-\frac{\Big(G_{21}(t,\rho,\tau)+G_{22}(t,\rho,\tau)\rho'(t)+G_{23}(t,\rho,\tau)\tau'(t)\Big)\rho'(t)}{G_{2}(t,\rho,\tau)}\nonumber\\
&&-\frac{\Big(G_{31}(t,\rho,\tau)+G_{32}(t,\rho,\tau)\rho'(t)+G_{33}(t,\rho,\tau)\tau'(t)\Big)\tau'(t)}{G_{2}(t,\rho,\tau)}.
\end{eqnarray*}
Since $\rho(t_{c})=\tau(t_{c})=1$ and $\rho'(t_{c})=0$,
\begin{eqnarray}
\rho''(t_{c})
&=&-\frac{G_{11}(t_{c},1,1)}{G_{2}(t_{c},1,1)}\nonumber\\
&&-\frac{\Big(G_{13}(t_{c},1,1)+G_{31}(t_{c},1,1)+G_{33}(t_{c},1,1)\tau'(t_{c})\Big)\tau'(t_{c})}{G_{2}(t_{c},1,1)}.\nonumber\\\label{eq.rho2prime}
\end{eqnarray}

We differentiate $G_1(t,z,y)$ in (\ref{eq.G1}) with respect to the first variable:
\begin{eqnarray*}
G_{11}(t,z,y)
&=&-2z x_1(t)x_1'(t) (y-1)u(t,y)
+ z(1- x_1^2(t))^2 (y-1)^2 u(t,y)\\
&& + 2z(z-1) x_1(t)x_1'(t).
\end{eqnarray*}
Thus, given a fixed $t$, we have,  at $(\rho,\tau)$,
\begin{eqnarray*}
G_{11}(t,\rho,\tau)
&=&-2 \rho x_1(t)x_1'(t) (\tau-1)u(\tau)\nonumber\\
&&+ \rho (1- x_1^2(t))^2 (\tau-1)^2 u(\tau) + 2\rho(\rho-1) x_1(t)x_1'(t),
\end{eqnarray*}
and in particular, since $\rho(t_{c})=\tau(t_{c})=1$, we have
\begin{eqnarray}
G_{11}(t_{c},1,1)=0.\label{eq.G11}
\end{eqnarray}

We differentiate  $G_3(t,z,y)$ in (\ref{eq.G3}) with respect to  the first variable:
\begin{eqnarray*}
\frac{\partial }{\partial t}G_3(t,z,y)&=&G_{31}(t,z,y)+G_{32}(t,z,y)z'(t)+G_{33}(t,z,y)y'(t),
\end{eqnarray*}
where
\begin{eqnarray*}
G_{31}(t,z,y)&=&\beta'(t) z u(t,y)+ \beta(t) z u(t,y) (1- x_1^2(t)) (y-1)\\
G_{32}(t,z,y)&=&\beta(t)u(t,y)\\
G_{33}(t,z,y)&=&\beta(t)^2 z u(t,y).
\end{eqnarray*}
Thus, given a fixed $t$, we have,  at $(\rho,\tau)$,
\begin{eqnarray*}
0&=&G_{31}(t,\rho,\tau)+G_{32}(t,\rho,\tau)\rho'(t)+G_{33}(t,\rho,\tau)\tau'(t).
\end{eqnarray*}
and
\begin{eqnarray*}
G_{31}(t,\rho,\tau)&=&\beta'(t) \rho u(\tau)+ \beta  \rho u(\tau) (1- x_1^2(t)) (\tau-1)\\
G_{32}(t,\rho,\tau)&=&\beta u(\tau)>0\\
G_{33}(t,\rho,\tau)&=&\beta^2 \rho  u(\tau)>0,
\end{eqnarray*}
so
\begin{eqnarray*}
\tau'(t)&=&-\frac{G_{31}(t,\rho,\tau)+G_{32}(t,\rho,\tau)\rho'(t)}{G_{33}(t,\rho,\tau)}.
\end{eqnarray*}
In particular, since $\rho'(t_{c})=0$, we have
\begin{eqnarray}
\tau'(t_{c})&=&-\frac{G_{31}(t_{c},1,1)}{G_{33}(t_{c},1,1)}.\label{eq.tauprime}
\end{eqnarray}

From (\ref{eq.rho2prime}), (\ref{eq.G11}) and (\ref{eq.tauprime}) we get
\begin{eqnarray*}
\rho''(t_{c})
= \frac{G_{13}(t_{c},1,1)G_{31}(t_{c},1,1) }{G_{2}(t_{c},1,1)G_{33}(t_{c},1,1)}.
\end{eqnarray*}
Note that $G_2(t_{c},1,1)=u(1)+q(1)>0$,
$G_{31}(t_{c},1,1)=\beta'(t_{c}) u(t_{c},1)>0$ and $G_{33}(t_{c},1,1)=\beta^2(t_{c})  u(t_{c},1)>0$.
Finally, we differentiate $G_1(t,z,y)$ in (\ref{eq.G1}) with respect to the third variable:
\begin{eqnarray*}
G_{13}(t,z,y)
&=& z(1- x_1^2(t)) u(t,y)+  \beta(t) z(1- x_1^2(t)) (y-1)u(t,y).
\end{eqnarray*}
Thus, given a fixed $t$, we have,  at $(\rho,\tau)$,
\begin{eqnarray*}
G_{13}(t,\rho,\tau)
&=& \rho (1- x_1^2(t)) u(\tau)+  \beta  \rho(1- x_1^2(t)) (\tau-1) u(\tau)
\end{eqnarray*}
and so, 
\begin{eqnarray*}
G_{13}(t_{c},1,1)
&=& (1- x_1^2(t_{c})) u(t_{c},1) >0.
\end{eqnarray*}
Therefore, we have $\rho''(t_{c})>0$.

\end{proof}

\paragraph{Proof of Theorem \ref{smallpointsthm}.}

 From Equation (\ref{BFcteq}), and Propositions \ref{rhop} and \ref{rhopp},
\begin{eqnarray*}
 c(t_c+\eps) &=& c(t_{c}) +O(\eps)\\
 \rho(t_c+\eps) &=& 1+ \frac{\rho^{\prime \prime}(t_c)}{2}\eps^2   +O(\eps^3).
\end{eqnarray*}
If we let $d = \rho^{\prime \prime}(t_c)/2 > 0$ (see Proposition \ref{rhopp}), then since $\gamma(t)=1/\rho(t)$ we have
\begin{equation*}
 \gamma(t_c+\eps) = 1- d \eps^2 + O(\eps^3).
\end{equation*}
Let $c= \frac{c(t_{c})}{2 \sqrt \pi}$. Equation (\ref{BFxipre}) gives us:
\begin{equation}
 x_i(t_c + \eps) =  C(\eps)  i^{-3/2} e^{- D(\eps) \eps^2 i} \, (1 + O(1/i))
\end{equation}
where $C(\eps) = c+ O(\eps)$ and $D(\eps) = d + O(\eps)$.  This gives Theorem \ref{smallpointsthm}.

\vt
{\bf Acknowledgment.}
This research was carried out during the first author's visit to New York University as a Heisenberg Fellow of the German Research Foundation, 
and the second and third authors' visit to Berlin.

\bibliographystyle{abbrv}
\bibliography{wfcpbib.bib}

\end{document}